\documentclass[preprint,12pt,compress]{elsarticle}
\usepackage{cancel}
\usepackage{amsfonts}
\usepackage{xcolor}
\usepackage{mathrsfs}
\usepackage{amsmath,amsfonts,amssymb,amscd,amsthm, mathrsfs}
\usepackage{extarrows}
\usepackage{lineno}
\usepackage{hyperref}					
\usepackage{verbatim}                   
\usepackage{booktabs}                   
\usepackage{float}
\usepackage{subcaption}

\usepackage{graphicx}
\usepackage{epsfig}
\usepackage{epstopdf}
\usepackage[normalem]{ulem}
\usepackage{tabularx}  

\hypersetup{colorlinks,
	linkcolor=blue,%
	citecolor=blue}

\newtheoremstyle{kai}
{3pt}{3pt}{}{}{\bfseries}{.}{.5em}{}
\makeatletter
\def\EquationsBySection{\def\theequation
	{\thesection.\arabic{equation}}%
	\@addtoreset{equation}{section}}

\newcommand\old[1]{}

\newtheorem{theorem}{Theorem}[section] 
\newtheorem{definition}[theorem]{Definition} 
\newtheorem{lemma}[theorem]{Lemma}
\newtheorem{assumption}[theorem]{Assumption}

\newtheorem{proposition}[theorem]{Proposition}
\newtheorem{remark}[theorem]{Remark}
\newtheorem{example}{Example}[section]

\journal{XXX} 
\EquationsBySection
\makeatother

\begin{document}

\begin{frontmatter}
	\title{
	Adaptive Time-Stepping Euler--Maruyama Scheme for SDEs with Non-Globally Lipschitz Coefficients: Uniform Convergence, Stability and Ergodicity
    }

\author[aff1]{Xueqi Wen}
\author[aff2]{Shan Huang}
\author[aff1]{Xiaoyue Li\corref{cor1}}

\address[aff1]{School of Mathematical Sciences,
	Tiangong University, Tianjin 300387, China}

\address[aff2]{School of Mathematics and Statistics,
	Northeast Normal University, Changchun, Jilin 130024, China}

\cortext[cor1]{Corresponding author.}

\ead{lixy@tiangong.edu.cn}

\fntext[funding]{  	
	Research of Xiaoyue Li was supported by the National Natural Science Foundation
	of China (No. 12371402) and the Tianjin Natural Science Foundation (24JCZDJC00830).}
    

\begin{abstract}
This paper develops an adaptive time-stepping Euler--Maruyama scheme for stochastic differential equations (SDEs) with non-globally Lipschitz drift and diffusion coefficients. By dynamically adjusting the timestep at each iteration, the proposed scheme effectively prevents numerical instability.
We prove the moment boundedness of the numerical solution and establish a $1/2$-order strong convergence rate both on finite-time intervals and uniformly in time. Furthermore, the scheme faithfully inherits the $p$th moment exponential stability of the underlying SDE. For long-time ergodic dynamics, we establish the polynomial ergodicity of the numerical invariant measure. Moreover, we show that the numerical invariant measure converges to the invariant measure of the underlying SDE at an optimal rate of $1/2$ in the $L^q$-Wasserstein distance. 
Numerical experiments confirm our theoretical results and indicate the superior accuracy and computational performance of the proposed scheme over several fixed-step and existing adaptive methods.

 \end{abstract}
	
	\begin{keyword}
		adaptive Euler--Maruyama scheme \sep non-globally Lipschitz coefficients \sep uniform-in-time strong convergence \sep polynomial ergodicity \sep invariant measure 
		 
	\end{keyword}
	
\end{frontmatter}

\section{Introduction} \label{intro}
Sampling from a prescribed probability distribution plays an important role in
computational statistics, statistical physics and machine learning 
\cite{metropolis1953equation,geman1984stochastic,tierney1994markov,jones2022markov, heber2020posterior}.
A prevailing approach is to construct an ergodic diffusion process that admits the target distribution as its invariant measure.
A prototypical example is the overdamped Langevin equation
\begin{equation} \label{overdamped}
	dX(t) =  - \nabla V(X(t))dt + \sqrt {2{\beta ^{ - 1}}} dW(t), \quad t \ge 0,
\end{equation}
where $V:\mathbb{R}^d \to \mathbb{R}$ is a potential function, $W(t)$ is a $d$-dimensional Brownian motion, and $\beta >0$ denotes the inverse temperature parameter. Under suitable smoothness and confining assumptions on $V(\cdot)$, the Boltzmann--Gibbs distribution with density
\begin{equation}\label{Gibbs}
	\rho(x)=\frac{1}{Z}\exp\bigl(-\beta V(x)\bigr), \quad Z = \int_{{\mathbb{R}^d}} {\exp ( - \beta V(x))dx}
\end{equation}
is the unique invariant probability measure of stochastic differential equation (SDE) \eqref{overdamped} (see e.g., \cite[Proposition 4.2]{Pavliotis2014stochastic}). In many practical applications, however, the normalizing constant $Z$ is not available in closed form, as it involves a high-dimensional integral. 
This motivates the construction of numerical schemes that preserve the relevant long-time behaviour and whose invariant measures accurately approximate the target distribution. 
It is well-known that explicit approximation methods, such as Euler--Maruyama (EM), may fail to be ergodic for SDEs with local Lipschitz coefficients, even when the underlying SDE is exponentially ergodic (see e.g., \cite{roberts1996exponential, mattingly2002ergodicity}). In this paper, we aim to develop an adaptive time-stepping EM scheme, and use its numerical invariant to approximate that of the underlying SDE.

Adaptive time-stepping methods, which prevent numerical explosions by dynamically adjusting timestep based on the current state, have been extensively studied over the past two decades. 
Nonetheless, as noted by Lamba, Mattingly and Stuart \cite[p. 479]{lamba2007an}, ``the understanding of adaptive algorithms for SDEs is an open area, where many issues related to both convergence and stability (long-time behaviour) of algorithms are unresolved.''
In particular, results on uniform-in-time strong convergence and stability remain sparse.
Fang and Giles \cite{fang2020adaptive} proposed an adaptive EM scheme for SDEs with one-sided Lipschitz drift, and globally bounded and Lipschitz continuous diffusion coefficients. 
Without requiring a uniform positive minimum stepsize, they obtained a $1/2$-order uniform-in-time strong convergence rate, and a first-order for Langevin-type SDEs.  
Kieu et al. \cite{kieu2022strong} introduced a tamed-adaptive EM scheme for a L\'evy-driven SDE under locally Lipschitz drift and locally H\"older continuous coefficients. The $L^1$ uniform-in-time convergence rate was studied under some regularity of the coefficients.

Despite these advances, the uniform-in-time convergence analysis for SDEs with fully locally Lipschitz drift and diffusion coefficients remains largely open. 
Moreover, stability is another key component of long-time numerical analysis. To the best of our knowledege, moment exponential stability in the adaptive time-stepping setting has rarely been investigated.
To bridge these gaps, this paper proposes an adaptive time-stepping EM scheme for SDEs with fully locally Lipchitz coefficients, establishing its uniform-in-time strong convergence theory while simultaneously reproducing the moment exponential stability of exact solution of the SDE. 

In sampling, time discretizations of SDEs are widely used to construct Markov chains targeting a given distribution.
It is thus crucial that the numerical schemes preserve the ergodic properties of the continuous-time dynamics \cite{lamba2007an}. 
For SDEs with non-globally Lipschitz drift and bounded diffusion, Lamba et al. \cite{lamba2007an} developed an adaptive scheme that controls the drift contribution at each time step, and proved its exponential ergodicity.
Lemaire \cite{lemaire2007an} constructed an explicit adaptive EM scheme for dissipative SDE with locally Lipschitz drift, in which the adaptive timestep is chosen as the minimum of a deterministic decreasing stepsize and a state-dependent stability bound. Under suitable Lyapunov and growth conditions, the adaptive timesteps eventually coincide with the decreasing stepsize almost surely, and every weak limit of the associated weighted empirical measures is an invariant measure of the underlying SDE. 
For ergodic SDEs with one-sided Lipschitz drift and globally Lipschitz, bounded diffusion coefficients, Fang and Giles \cite{fang2020adaptive} proposed an adaptive multilevel Monte Carlo method to compute expectations with respect to the invariant measure. 
More recently, Leroy et al. \cite{leroy2024adaptive} introduced an invariant measure-preserving transformation for overdamped and underdamped Langevin dynamics based on a state-dependent time rescaling. Since direct time rescaling changes the invariant measure, they introduced a correction term to ensure that the transformed dynamics still preserve the target Gibbs distribution.
Despite these advances, the preservation of ergodicity by adaptive schemes for SDEs with both locally Lipschitz drift and diffusions remains an open problem. In this paper, we propose an adaptive time-stepping EM scheme for such SDEs and establish its ergodicity. Furthermore, we demonstrate that the corresponding numerical invariant measure converges to the invariant measure of the underlying SDE in $L^q$-Wasserstein distance.

Our main contributions are summarized as follows:
\begin{itemize}
	\item An adaptive time-stepping EM scheme is developed for SDEs with non-globally Lipschitz drift and diffusion coefficients. It is also shown that the terminal time $T>0$ is almost surely attainable, namely,
	$\mathbb{P}(\exists M(\omega ) < \infty \; \text{s.t.} \; {t_{M(\omega )}} \ge T) = 1$,
	which ensures that the adaptive numerical iteration reaches $T$ in finitely many steps.

	\item Strong convergence of the proposed scheme is established both on finite time intervals and uniformly in time. Under a slightly strengthened assumption, the optimal $1/2$-order of strong convergence rate is further obtained.
	
	\item It is shown that the adaptive time-stepping EM scheme preserves the $p$th moment exponential stability of the exact solution.
	
	\item The polynomial ergodicity of the proposed scheme is established. Moreover, a $1/2$-order of convergence rate in the $L^q$-Wasserstein distance is derived between the invariant measure of the exact solution and that of the numerical scheme.

\end{itemize}

The primary goal of this work is to construct an explicit adaptive time-stepping EM scheme for SDEs with locally Lipschitz drift and diffusion coefficients, and to establish its ergodicity along with the convergence of its invariant measure to that of the underlying SDE.
The adaptive timestep is dynamically adjusted at each iteration to prevent numerical explosions.
Because no uniform positive minimum stepsize is imposed, a primary challenge is to ensure that any terminal time $T > 0$ is almost surely attainable. Adopting the auxiliary $K$-scheme introduced by Fang and Giles \cite{fang2020adaptive}, we show that the numerical iteration reaches $T$ in finitely many steps almost surely, and establish pathwise moment bounds on finite time intervals. Building upon a continuous-time numerical scheme, we establish a $1/2$-order of strong convergence rate both on finite intervals and uniformly in time.
Furthermore, by employing Taylor expansions and the optimal stopping theorem, we prove that the continuous-time numerical solution inherits the $p$th moment exponential stability of the SDE, filling a theoretical gap for adaptive time-stepping schemes.

To establish ergodicity, we first prove that the discrete-time numerical solution forms a time-homogeneous, irreducible, and aperiodic Markov chain. Without a uniform positive minimum stepsize, we introduce a polynomial lower bound on the adaptive timestep function (see Assumption \ref{A-step}). Harris's theorem then yields the existence and uniqueness of the numerical invariant measure, while a refined timestep condition guarantees polynomial ergodicity. Finally, combining the uniform-in-time convergence rate with the ergodicity of the exact solution, we obtain a $1/2$-order convergence rate in the $L^q$-Wasserstein distance between the exact and numerical invariant measures. Numerical experiments on stiff, nonstiff, and overdamped Langevin systems validate that the proposed scheme achieves superior accuracy and better overall performance than the competing methods.

The rest of the paper is organized as follows. Section \ref{sec2} presents some preliminaries and results of the exact solution. Section \ref{sec3} constructs the adaptive time-stepping EM scheme and proves the boundedness of the numerical solutions. 
Section \ref{sec4} establishes the strong convergence result on finite time intervals. 
Section \ref{sec5} investigates the $p$th moment boundedness of the adaptive time-stepping EM solutions on infinite time intervals. Section \ref{sec6} further establishes the uniform-in-time convergence rate. 
Section \ref{sec7} shows that the adaptive time-stepping EM scheme is capable of reproducing both the $p$th moment exponential stability and almost sure exponential stability. 
Section \ref{sec8} shows the convergence rate of the invariant measure of the adaptive time-stepping EM scheme.
Section \ref{sec9} presents several numerical experiments to support our theoretical results. 
Section \ref{sec10} concludes the paper with some further remarks.

\section{Preliminaries} \label{sec2}
Let $(\Omega ,\mathcal{F},\mathbb{P})$ be a complete probability space with a filtration ${\{ {\mathcal{F}_t}\} _{t \ge 0}}$ satisfying the usual conditions, that is, it is increasing and right continuous while $\mathcal{F}_0$ contains all $\mathbb{P}$-null sets. For any vector or matrix $A$, we denote its transpose by $A^{\rm T}$. Let $W(t) = (W_1(t), W_2(t),...,W_m(t))^{\rm T} $ be an $m$-dimensional Brownian motion. Let $\left|  \cdot  \right|$ denote both the Euclidean norm in $\mathbb{R}^d$ and the Frobenius norm in $\mathbb{R}^{d \times m}$. Let $\langle { \cdot,  \cdot } \rangle $ denote the inner product of vectors in $\mathbb{R}^d$.  
Let $\mathbb{R}_+ = (0, +\infty)$. Let $\mathbb{N}^+$ denote the set of positive integers.
Let $\mathcal{P}(\mathbb{R}^d)$ denote the set of all probability measures on $\mathbb{R}^d$. For $q \ge 1$, define
\begin{equation*}
	{\mathcal{P}_q}({\mathbb{R}^d}) = \left\{ {\mu  \in \mathcal{P}({\mathbb{R}^d}):\mu \left( {{{\left|  \cdot  \right|}^q}} \right) = \int_{{\mathbb{R}^d}} {{{\left| x \right|}^q}\mu (dx)}  < \infty } \right\} .
\end{equation*}
For $\mu ,\nu  \in {\mathcal{P}_q}({\mathbb{R}^d})$, the $L^q$-Wasserstein distance is defined by
\begin{equation*}
	{{\cal W}_q}(\mu ,\nu ) = \mathop {\inf }\limits_{\pi  \in {\cal C}(\mu ,\nu )} {\left( {\int_{{\mathbb{R}^d} \times {\mathbb{R}^d}} {{{\left| {x - y} \right|}^q}\pi (dx,dy)} } \right)^{1/q}} ,
\end{equation*}
where $\mathcal{C}(\mu ,\nu )$ is the set of all couplings of $\mu ,\nu$, i.e., $\pi  \in \mathcal{C}(\mu ,\nu )$ if and only if $\pi ( \cdot ,{\mathbb{R}^d}) = \mu ( \cdot )$ and $\pi ({\mathbb{R}^d}, \cdot) = \nu ( \cdot )$. Clearly, $({\mathcal{P}_q}({\mathbb{R}^d}),{\mathcal{W}_q})$ is a complete metric space for any $q \ge 1$ (see e.g., \cite[Theorem 5.4, p. 175]{chen2004from}).

Let $\mu$ and $\nu$ be two probability measures on $(\mathbb{R}^d, \mathcal{B}(\mathbb{R}^d))$. The total variation distance between $\mu$ and $\nu$ is defined by
\begin{equation}
	{\left\| {\mu  - \nu } \right\|_{\rm TV}} = \mathop {\max }\limits_{A \in \mathcal{B}(\mathbb{R}^d)} \left| {\mu (A) - \nu (A)} \right|.
\end{equation}
Equivalently, it admits the dual representation (see e.g., \cite[p. 49]{levin2017markov})
\begin{equation}  \label{aa2}
	{\left\| {\mu  - \nu } \right\|_{\rm TV}} = \frac{1}{2}\sup \left\{ {\int_{{\mathbb{R}^d}} {f(x)\mu (dx)}  - \int_{{\mathbb{R}^d}} {f(x)\nu (dx)} :f \in \mathcal{B}_b (\mathbb{R}^d), \; \mathop {\max }\limits_{x \in {\mathbb{R}^d}} \left| {f(x)} \right| \le 1} \right\}.
\end{equation}
Here, $\mathcal{B}_b (\mathbb{R}^d)$ denotes the space of real-valued bounded Borel measurable functions on $\mathbb{R}^d$.
Moreover, if $|\mu-\nu|$ denotes the total variation measure of the signed measure $\mu-\nu$, then
\begin{equation}  \label{aa1}
	\left| {\mu  - \nu } \right|({\mathbb{R}^d}) = 2{\left\| {\mu  - \nu } \right\|_{\rm TV}} .
\end{equation}

Throughout this paper, we use $C$ to denote a generic positive constant whose value may change from place to place. Furthermore, we use $C_a$ to emphasize the dependence of the generic constant the parameter $a$.

In this paper, we consider a $d$-dimensional It\^o stochastic differential equation 

\begin{equation}  \label{equation}
	\left\{  
	\begin{array}{l}
		dX(t) = f(X(t))dt + g(X(t))dW(t), \quad t \ge 0,  \\
		X(0) = x_0,
	\end{array}
	\right.
\end{equation}
where $W(t)$ is an $m$-dimensional Brownian motion, and the drift and diffusion coefficients are $f:{\mathbb{R}^d} \to {\mathbb{R}^d}$ and $g:{\mathbb{R}^d} \to {\mathbb{R}^{d \times m}}$. For convenience, we impose the following assumptions.

\begin{assumption}  \label{A2.1}
	 For each $R > 0$, there exists a positive constant $C_R$ such that
	\begin{equation} \label{llc}
		\left| {f(x) - f(y)} \right| + \left| {g(x) - g(y)} \right| \le {C_R}\left| {x - y} \right|,
	\end{equation}
	for all $x, y \in \mathbb{R}^d$ with $\left| x \right| \vee \left| y \right| \le R$. Furthermore, there exist constants $p \ge 2$ and $K_1, K_2 \ge 0$ such that 
	\begin{equation}  \label{A2.1-1}
		\left\langle {x,f(x)} \right\rangle  + \frac{{p - 1}}{2}{\left| {g(x)} \right|^2} \le {K_1}{\left| x \right|^2} + {K_2}
	\end{equation} 
	for any $x \in \mathbb{R}^d$. 
\end{assumption}

\begin{assumption}  \label{Ag}
	Assume that there exists constants $l > 2$ and $K_3 > 0$ such that
	\begin{equation*}
		{\left| {g(x)} \right|^2} \le K_3\left( {1 + {{\left| x \right|}^{l}}} \right) 
	\end{equation*}
	 for any $x \in \mathbb{R}^d$.
\end{assumption}

Under Assumption \ref{A2.1}, the SDE \eqref{equation} has a unique strong solution on any interval $[0, T]$ with $T < \infty$ (see e.g., \cite{mao2008stochastic}). Moreover, the following moment bounds apply over any interval $[0, T]$. 

\begin{lemma} (\cite[Lemma 4.2]{mao2015the})  \label{EXt}
Let Assumptions \ref{A2.1} and \ref{Ag} hold, and assume that $ p > l-2$. Set $\bar p = p +2-l$. Then
\begin{equation*}
	\mathbb{E}\left( {\mathop {\sup }\limits_{0 \le t \le T} {{\left| {X(t)} \right|}^{\bar p}}} \right) \le C_T, \quad \forall T > 0.
\end{equation*}
\end{lemma}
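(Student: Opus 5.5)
Following the standard route for Mao's Lemma~4.2, we apply It\^o's formula to $|X(t)|^{\bar p}$, control the drift part with \eqref{A2.1-1}, and control the martingale part with the Burkholder--Davis--Gundy inequality. The parameter $\bar p = p+2-l < p$ appears precisely because the quadratic variation of the martingale involves $|x|^{2\bar p-2}|g(x)|^2$, and Assumption~\ref{Ag} lets $|g|^2$ grow like $|x|^l$.

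\textbf{Localization and the drift term.} For $R>0$ let $\tau_R=\inf\{t\ge0:|X(t)|\ge R\}$, so all integrals below are finite up to $t\wedge\tau_R$. Since $\bar p\le p$, for $|x|\ge 1$ we have
\[
\tfrac{\bar p-2}{2}|x^{\rm T}g(x)|^2\le \tfrac{\bar p-2}{2}|x|^2|g(x)|^2\le \tfrac{p-2}{2}|x|^2|g(x)|^2 .
\]
Hence It\^o's formula gives
\begin{align*}
|X(t\wedge\tau_R)|^{\bar p} &\le |x_0|^{\bar p} + \bar p\int_0^{t\wedge\tau_R}|X(s)|^{\bar p-2}\Bigl(\langle X,f(X)\rangle+\tfrac{p-1}{2}|g(X)|^2\Bigr)ds + M(t\wedge\tau_R)\\
&\le |x_0|^{\bar p} + C\int_0^{t\wedge\tau_R}\bigl(1+|X(s)|^{\bar p}\bigr)ds + M(t\wedge\tau_R),
\end{align*}
where $M(t)=\bar p\int_0^t|X|^{\bar p-2}X^{\rm T}g(X)\,dW$. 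To get the second line we apply Young's inequality to $|x|^{\bar p-2}K_2$.

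\textbf{The martingale term.} Next we take the supremum over $[0,t]$ and then expectations. By Burkholder--Davis--Gundy,
\[
\mathbb E\sup_{s\le t}|M(s\wedge\tau_R)|\le C\,\mathbb E\Bigl(\int_0^{t\wedge\tau_R}|X|^{2\bar p-2}|g(X)|^2ds\Bigr)^{1/2}.
\]
We bound the integrand using Assumption~\ref{Ag}: $|X|^{2\bar p-2}|g(X)|^2\le C|X|^{\bar p}\bigl(|X|^{\bar p-2}+|X|^{\bar p+l-2}\bigr)$. Then
\[
C\,\mathbb E\Bigl(\sup_{s\le t}|X(s\wedge\tau_R)|^{\bar p}\int_0^{t\wedge\tau_R}\bigl(|X|^{\bar p-2}+|X|^{p}\bigr)ds\Bigr)^{1/2}
\le \tfrac12\,\mathbb E\sup_{s\le t}|X(s\wedge\tau_R)|^{\bar p} + C\,\mathbb E\int_0^{T}\bigl(1+|X(s\wedge\tau_R)|^{p}\bigr)ds .
\]
Here we used $\bar p+l-2=p$ and the elementary inequality $\sqrt{ab}\le a/2+Cb$.

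\textbf{Closing the estimate.} The last term needs a bound on $\sup_{t\le T}\mathbb E|X(t)|^p\le C_T$. This follows from It\^o's formula for $|X|^p$ together with \eqref{A2.1-1}: the $dt$ term is at most $C(1+|X|^p)$, and the stochastic integral has zero mean after localization. Gronwall's inequality and Fatou's lemma (letting $R\to\infty$, using $\tau_R\to\infty$ a.s.) then give the bound. The main obstacle is the one just handled, namely absorbing the growth of $g$ into the $p$th moment. This is exactly why $\bar p=p+2-l$ is needed and why we require $p>l-2$, so that $\bar p>0$. If $\bar p<2$, the function $|x|^{\bar p}$ is not $C^2$ at the origin; in that case we run the argument with $(1+|x|^2)^{\bar p/2}$ instead, and all the bounds above are unchanged. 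Finally, we rearrange, absorb the $\tfrac12$ term (it is finite thanks to the stopping time), apply Gronwall to $\mathbb E\sup_{s\le t}|X(s\wedge\tau_R)|^{\bar p}$, and let $R\to\infty$ by monotone convergence to obtain $\mathbb E\sup_{0\le t\le T}|X(t)|^{\bar p}\le C_T$.
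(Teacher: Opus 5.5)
Your argument is correct and is the standard one. The paper gives no proof of its own here and simply cites Mao's Lemma 4.2, but your scheme is exactly the route the paper itself follows for the numerical solution in Step~3 of the proof of Theorem~\ref{th2.1} (the estimates of $J_1$ and $J_3$): It\^o's formula for $|X|^{\bar p}$ (or $(1+|X|^2)^{\bar p/2}$ when $\bar p<2$), the Khasminskii bound for the drift, and BDG plus Young's inequality with $\bar p+l-2=p$ to absorb the martingale term into the $p$th moment.
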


\section{Explicit adaptive time-stepping EM scheme} \label{sec3}

In this section, we construct an explicit adaptive time-stepping EM scheme for SDE \eqref{equation} and show the boundedness of the numerical solutions.
We first specify the critical assumption about the adaptive timestep.

\begin{assumption}  \label{A-stepsize}
	The adaptive timestep function $\delta :{\mathbb{R}^d} \to \mathbb{R}_+$ is continuous. Moreover, there exists a constant $\gamma \in (0,1/3]$ such that 
	\begin{equation}  \label{A-step1}
		\left| {f(x)} \right|{\left( {\delta (x)} \right)^\gamma } \le {1 + \left| x \right|} , \quad {\left| {g(x)} \right|^2}{\left( {\delta (x)} \right)^\gamma } \le  {\left( {1 + \left| x \right|} \right)^2} ,
	\end{equation}	
	for any $x \in \mathbb{R}^d$.
\end{assumption}

Note that Assumption \ref{A-stepsize} remains valid if $\delta (\cdot)$ is replaced by any continuous and strictly positive timestep function $\delta^\Delta (\cdot)$ satisfying $\delta^\Delta (x) \le \delta(x)$ for any $x \in \mathbb{R}^d$.

\begin{remark}
We provide several examples of adaptive timestep functions satisfying Assumption \ref{A-stepsize} with $\gamma \in (0, 1/3]$:
\begin{itemize}
	\item[(i)] $\delta (x) = {\left( {\frac{1}{{1 \vee \left| {f(x)} \right|}} \wedge \frac{1}{{1 \vee {{\left| {g(x)} \right|}^2}}}} \right)^{1/{\gamma }}}$;
	
	\item[(ii)] $\delta (x) = {\left( {\frac{{1 \vee \left| x \right|}}{{1 \vee \left| {f(x)} \right|}} \wedge \frac{{1 \vee {{\left| x \right|}^2}}}{{1 \vee {{\left| {g(x)} \right|}^2}}}} \right)^{1/{\gamma }}}$;
	
	\item[(iii)] If the drift and diffusion coefficients satisfy the polynomial growth condition, that is, there exist constants $l_1, l_2 \ge 1$ and  $C_1, C_2 > 0$ such that $\left| {f(x)} \right| \le C_1( {1 + {{\left| x \right|}^{{l_1}}}} )$ and $\left| {g(x)} \right| \le C_2( {1 + {{\left| x \right|}^{{l_2}}}} )$
	for any $x \in \mathbb{R}^d$, then one may choose 
	\begin{equation*}
		\delta (x) = {\left( {{C_1} \vee {C_2^2}} \right)^{ - 1/\gamma }}{\left( {1 + \left| x \right|} \right)^{\frac{{(1 - {l_1}) \wedge (2(1 - {l_2}))}}{\gamma }}}.
	\end{equation*}.
\end{itemize}
\end{remark}

In the following, we propose an adaptive time-stepping EM scheme to approximate the exact solution of SDE \eqref{equation}. Set $t_0 = 0$ and $Y_0 = x_0$. Define
\begin{equation}  \label{scheme}
	\left\{  
	\begin{array}{l}
	    \delta _n := \delta (Y_{t_n}), \quad t_{n+1} = t_n + \delta _n , \quad n = 0,1,...,\\
		{Y_{{t_{n + 1}}}} =  {{Y_{{t_n}}} + f({Y_{{t_n}}}){\delta _n} + g({Y_{{t_n}}})\Delta {W_n}}, \quad n = 0,1,...,
	\end{array}
	\right.
\end{equation}
where $\Delta {W_n} = W({t_{n + 1}}) - W({t_n})$. For any $t \ge 0$, we define $ \underline{t}= \max \{ {t_n}:{t_n} \le t\} $, ${n_t} = \max \{ n:{t_n} \le t\} $ as the nearest time point
before time $t$, and its index. Based on \eqref{scheme}, we define two continuous-time versions of the adaptive time-stepping EM solutions by
\begin{equation} \label{continuous}
	Y(t) = Y_{\underline{t}}, \quad   t \ge 0,
\end{equation}
and
\begin{equation}   \label{continuous1}
\begin{split}
	\hat{Y}(t) = {Y_{\underline{t}}} + f({Y_{\underline{t}}})(t - \underline{t}) + g({Y_{\underline{t}}})(W(t) - W(\underline{t})) ,  \quad   t \ge 0.
\end{split}
\end{equation}
Together with \eqref{scheme}-\eqref{continuous1} we have
\begin{equation}  \label{continuous2}
\begin{split}
	\hat{Y}(t) = {x_0} + \int_0^t {f( Y(s))ds}  + \int_0^t {g(Y(s))dW(s)},  \quad   t \ge 0.
\end{split}
\end{equation}
Clearly, one has $\hat Y(\underline{t}) = Y_{\underline{t}} = {Y}(t)$ for any $t \ge 0$. 

\begin{remark}  \label{remark-filtration}
Under Assumption \ref{A-stepsize}, the random times $t_n$, $n = 0,1,...,$ defined by \eqref{scheme} are $\{\mathcal{F}_t\}$-stopping times \cite[Remark 3]{wen2026strong}.
Moreover, we define the filtration associated with the stopping time $t_n$ by
\begin{equation*} 
	\mathcal{F}_{t_n} 
	= \left\{ A \in \mathcal{F} : A \cap \{\omega : t_n(\omega) \le t\} \in \mathcal{F}_t \ \text{for all} \ t \ge 0 \right\}.
\end{equation*}
Hence, $Y_{t_n}$ is $\mathcal{F}_{t_n}$-measurable, and the adaptive timestep $\delta_n=\delta(Y_{t_n})$ is also $\mathcal{F}_{t_n}$-measurable, and $t_{n+1}$ is $\mathcal{F}_{t_{n}}$-measurable.
\end{remark}

\begin{remark}   \label{remark-steps}
	Notice that each Brownian increment $W({t_{n + 1}}) - W({t_n})$ depends on the adaptive timestep $\delta_n = {t_{n + 1}} - {t_n}$, and from \eqref{scheme} we see ${\delta _n} = {\delta }({Y_{t_n}})$ is determined by $Y_{t_n}$. Hence, in general, $W({t_{n + 1}}) - W({t_n})$ is not independent of $\mathcal{F}_{t_n}$. Nevertheless, $W(t_{n+1}) - W(t_n)$ is $\mathcal{F}_{t_n}$-conditionally normally distributed (see \cite[Remark 2.2]{kelly2018adaptive}). The Brownian increment $W(t_{n+1}) -W(t_n)$ has the same distribution as the random variable $\sqrt{\delta_n} \chi$ \cite{botija2024explicit}, where $\chi  \sim N(0,{I_{m}})$ is independent of $\mathcal{F}_{t_n}$, and $I_{m}$ is the identity matrix. Consequently, for every $r > 0$, 
	\begin{equation*}
		\mathbb{E}\left( {{{\left| {\Delta {W_n}} \right|}^r}|{\mathcal{F}_{{t_n}}}} \right) \le {C_{r,m}}\delta _n^{r/2} \quad \text{a.s.}
	\end{equation*}
	Let $A_n \in \mathbb{R}^{d}$ and $B_n \in \mathbb{R}^{d \times m}$ be $\mathcal{F}_{t_n}$-measurable random variables. For every integer $i \ge 1$, we have
	\begin{equation} \label{kl7}
		\begin{split}
			&\mathbb{E}\left[ {{{\left( {\left\langle {A_n,B_n \Delta {W_n}} \right\rangle } \right)}^{2i - 1}}|{{\cal F}_{{t_n}}}} \right] = 0 \quad \text{a.s.}, \\
			&\mathbb{E}\left[ {{{\left( {\left\langle {A_n,B_n \Delta {W_n}} \right\rangle } \right)}^{2i}}|{{\cal F}_{{t_n}}}} \right] \le {\left| A_n \right|^{2i}}{\left| B_n \right|^{2i}}\mathbb{E}\left[ {{{\left| {\Delta {W_n}} \right|}^{2i}}|{{\cal F}_{{t_n}}}} \right]  \le {\left| A_n \right|^{2i}}{\left| B_n \right|^{2i}}(2i - 1)!!{m^i}\delta _n^i \quad \text{a.s.}
		\end{split}
	\end{equation}	
\end{remark}

We then establish the moment boundedness of the adaptive time-stepping EM solutions \eqref{scheme}.
One of our aims is to show that the terminal time $T > 0$ is almost surely attainable, i.e., $\mathbb{P}(\exists M(\omega ) \in \mathbb{N}^+ \; \text{s.t.} \; {t_{M(\omega )}} \ge T) = 1$, which ensures that the numerical iteration reaches $T$ in finitely many steps. Without loss of generality, we always assume that $\delta(x) \le T$ for any $x \in \mathbb{R}^d$.

\begin{theorem}   \label{th2.1}
	Let Assumptions \ref{A2.1}, \ref{Ag} and \ref{A-stepsize} hold and assume that $p > l$. Then $T$ is almost surely attainable, and the adaptive time-stepping EM scheme \eqref{scheme} has the property
	\begin{equation}   \label{a7}
		\mathbb{E}\left( {\mathop {\sup }\limits_{0 \le t \le T} {{\left| {\hat Y(t)} \right|}^{\bar p}}} \right) \le C_T, \quad \forall T>0,
	\end{equation}
where $C_T$ is a positive constant depending on $x_0, p, l, T$.	
\end{theorem}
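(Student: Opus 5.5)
We will follow the auxiliary $K$-scheme approach of Fang and Giles \cite{fang2020adaptive}. Fix $K\in\mathbb{N}^+$ and define a modified scheme $Y^K$ that coincides with \eqref{scheme} except that the timestep is replaced by $\delta^K_n=\max\{\delta(Y^K_{t_n}),T/K\}$. This scheme reaches $T$ after at most $K$ steps deterministically. The plan is to prove
\[
\mathbb{E}\Bigl(\sup_{0\le t\le T}|\hat Y^K(t)|^{\bar p}\Bigr)\le C_T
\]
with $C_T$ independent of $K$. On the event where every step taken before $T$ satisfies $\delta(Y^K_{t_n})\ge T/K$, the paths of $Y^K$ and $Y$ agree up to $T$. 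Hence $\{t_M\ge T$ for some $M\}\supseteq\{\sup_{[0,T]}|\hat Y^K|\le R_K\}$, where $R_K$ is chosen so that $\min_{|x|\le R_K}\delta(x)\ge T/K$. This uses continuity and positivity of $\delta$, and $R_K\to\infty$ as $K\to\infty$. Chebyshev's inequality with the uniform moment bound then gives
\[
\mathbb{P}(T\text{ not attained})\le C_T R_K^{-\bar p}\to0,
\]
so $T$ is attained almost surely. The bound \eqref{a7} for $\hat Y$ follows from the $K$-scheme bound by Fatou's lemma, letting $K\to\infty$ on the agreement events.

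The main step is the $K$-uniform moment bound. The difficulty is that when $\delta^K_n=T/K>\delta(Y_{t_n})$, Assumption \ref{A-stepsize} does not control $|f|\delta^K_n$. To handle this, we treat the two cases separately. For the true adaptive steps we use a localization: up to the stopping time $\rho_R=\inf\{t:|\hat Y^K(t)|>R\}$, or more simply on the steps with $\delta^K_n=\delta(Y_{t_n})$, the growth bound \eqref{A-step1} holds. Within such a step we apply Itô's formula to $(1+|\hat Y(t)|^2)^{p/2}$ using the representation \eqref{continuous2}. The drift term is
\[
\langle \hat Y(s),f(Y(s))\rangle+\tfrac{p-1}{2}|g(Y(s))|^2 .
\]
We split it as
\[
\langle Y(s),f(Y(s))\rangle+\tfrac{p-1}{2}|g|^2+\langle \hat Y(s)-Y(s),f(Y(s))\rangle .
\]
The first part is bounded by \eqref{A2.1-1}. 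The error term is bounded by
\[
|f|^2(s-\underline s)+|f||g||W(s)-W(\underline s)|.
\]
After taking conditional expectations (Remark \ref{remark-steps}), and using $\gamma\le1/3$ so that $|f|^2\delta\le(1+|x|)^2\delta^{1-2\gamma}$ and $|f||g|\delta^{1/2}\le(1+|x|)^2\delta^{1/2-3\gamma/2}$, we get bounds of order $C(1+|Y|^2)$. The remaining cross terms with $|\hat Y-Y|^{p-2}$ are handled in the same way, with powers of $\delta$ absorbing the growth.

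For the stretched steps, where $\delta(Y_{t_n})<T/K$, we intend to restrict to $|Y_{t_n}|\le R_K$, on which $\delta\ge T/K$ by definition, so stretched steps cannot occur there. We therefore stop the process at $\rho_{R_K}$ and work with the stopped process. Since agreement with $Y$ is only needed before exit from the ball, the stopped $K$-scheme is all that is required. This is the point we expect to be the main obstacle: we must make sure that all constants depend only on $T$, not on $K$ or $R$, and that the step straddling the stopping time is controlled.

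The final bound is obtained in two stages. First we obtain $\sup_t\mathbb{E}(1+|\hat Y(t\wedge\rho)|^2)^{p/2}\le C_T$ by Gronwall's inequality. Then we upgrade to the supremum inside the expectation via the Burkholder--Davis--Gundy inequality. Because the diffusion has growth $|x|^{l}$ (Assumption \ref{Ag}), the BDG term involves $|\hat Y|^{p-2}|g|^2$. This is why we drop to the exponent $\bar p=p+2-l$, as in Lemma \ref{EXt}: applying Itô's formula with exponent $\bar p$, the martingale term is
\[
\mathbb{E}\Bigl(\int |\hat Y|^{2\bar p-2}|g|^2\Bigr)^{1/2}\le \tfrac12\mathbb{E}\sup|\hat Y|^{\bar p}+C\int\mathbb{E}|Y|^{\bar p-2+l},
\]
and $\bar p-2+l=p$, which is bounded by the first stage.
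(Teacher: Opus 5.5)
Your route is a legitimate alternative to the paper's. The paper uses a projected auxiliary scheme $Y^K_{t_{n+1}}=\pi_K(Z^K_{t_{n+1}})$, whose states stay in the ball of radius $K$, so its step sizes are bounded below by $\min_{|x|\le K}\delta(x)$. You instead localize by the exit time $\rho_R$ and apply continuous-time It\^o, Gronwall and BDG to the stopped interpolant.

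Once you stop at $\rho_{R_K}$, the modification $\max\{\delta,T/K\}$ is never active. So you are really just localizing the original scheme. That scheme is already well defined on $[0,\rho_R\wedge T]$, because every step started in the ball has length at least $\min_{|x|\le R}\delta(x)>0$. The step straddling $\rho_R$ is harmless in continuous time. Boundedness of the stopped process also makes absorbing $\tfrac12\mathbb{E}\sup_{t\le T}|\hat Y(t\wedge\rho_R)|^{\bar p}$ rigorous.

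There is, however, a genuine gap: the moment bound at the \emph{random} grid times. Both of your stages need $\sup_{s\le T}\mathbb{E}|Y(s\wedge\rho_R)|^p$, where $Y(s)=\hat Y(\underline s)$:
\begin{itemize}
\item in the first stage, the Khasminskii part of the drift contributes $(1+|\hat Y(s)|^2)^{p/2-1}(K_1|Y(s)|^2+K_2)$;
\item in the BDG stage, Young's inequality turns $|\hat Y(s)|^{\bar p-2}(1+|Y(s)|^l)$ into $C(1+|\hat Y(s)|^p+|Y(s)|^p)$.
\end{itemize}
You say this moment is supplied by the first stage. But a Gronwall bound on $\sup_{u\le s}\mathbb{E}(1+|\hat Y(u\wedge\rho_R)|^2)^{p/2}$ controls $\hat Y$ only at deterministic times, and $\underline s$ is random, so it does not transfer. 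As sketched, the first-stage Gronwall inequality does not even close, since its right-hand side contains $\mathbb{E}|Y(s)|^p$. This is exactly the point the paper isolates in Remark~\ref{rem-a1}(i). The paper proves the grid-point bound separately through the one-step recursion $\mathbb{E}[(1+|Z^K_{t_{n+1}}|^2)^{p/2}\mid\mathcal F_{t_n}]\le(1+C_T\delta^K_n)(1+|Z^K_{t_n}|^2)^{p/2}$ (via \cite[Lemma 3.3]{yang2018explicit}), sums it over steps, and only then adds the partial step.

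You can repair your route in either of two ways.
\begin{itemize}
\item Add such a grid-point recursion.
\item Use the fact that $\underline s$ is itself a stopping time: for $u<s$, $\{\underline s\le u\}=\{t_{n_u+1}>s\}\in\mathcal F_u$, because $t_{n+1}$ is $\mathcal F_{t_n}$-measurable. Then run Gronwall on $\Phi(t)=\sup\{\mathbb{E}(1+|\hat Y(\sigma)|^2)^{p/2}:\ \sigma\le t\wedge\rho_R\ \text{a stopping time}\}$, which dominates both $\mathbb{E}(1+|\hat Y(u\wedge\rho_R)|^2)^{p/2}$ and $\mathbb{E}(1+|\hat Y(\underline u\wedge\rho_R)|^2)^{p/2}$ for $u\le t$. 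In this version, when you condition the error terms on $\mathcal F_{\underline s}$, note that $\mathcal{I}_{\{s<\rho_R\}}$ is not $\mathcal F_{\underline s}$-measurable. Replace it by the larger indicator $\mathcal{I}_{\{\underline s<\rho_R\}}$, which is.
\end{itemize}
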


\begin{proof}
Since the proof is rather technical, we divide it into four steps.

\textbf{Step 1:} Definition of the auxiliary $K$-scheme.
For any $K > |x_0|$, we define the truncation mapping ${\pi _K}:{\mathbb{R}^d} \to {\mathbb{R}^d}$ by
\begin{equation}  \label{pi}
	{\pi _K}(x) = \left( {\left| x \right| \wedge K} \right)\frac{x}{{\left| x \right|}},
\end{equation}
we use the convention $x/|x|=\textbf{0}$ when $x = \mathbf{0}$.
Set $t_0= 0$ and ${Y_0^K = Z_0^K = {x_0}}$. We then define an auxiliary $K$-scheme by
\begin{equation}   \label{K-scheme}
	\left\{  
	\begin{array}{l}
		\delta _n^{K}: = \delta (Y_{{t_n}}^K), \quad t_{n+1} = t_n + \delta _n^{K }, \quad n = 0,1,...,  \\
		{Z_{{t_{n + 1}}}^K = Y_{{t_n}}^K + f(Y_{{t_n}}^K)\delta _n^{K } + g(Y_{{t_n}}^K)\Delta W_n}, \quad n = 0,1,...,  \\
		{Y_{{t_{n + 1}}}^K = {\pi _K}(Z_{{t_{n + 1}}}^K)}, \quad n = 0,1,...,  \\  
	\end{array}
	\right.
\end{equation}
where $\Delta W_n = W({t_{n + 1}}) - W({t_n})$. 
For notational simplicity, in Steps 1--3 we shall write $t_n$ instead of $t_n^K$ whenever no confusion can arise.
By the definition of $\pi_K(\cdot)$, we have
\begin{equation}  \label{a33}
	\begin{split}
		\left| {Y_{{t_n}}^K} \right| \le \left| {Z_{{t_n}}^K} \right|, \quad \left| {Y_{{t_n}}^K} \right| \le K, \quad a.s., \quad n =1,2,...
	\end{split}
\end{equation}
Moreover, we define two step processes  
\begin{equation}  \label{a22}
	{Y^K}(t) = Y_{\underline{t}}^K, \quad {Z^K}(t) = Z_{\underline{t}}^K, \quad \forall t \ge 0, 
\end{equation}
and define
\begin{equation}  \label{a11}
	{{\hat{Z}}^K}(t) = Y_{\underline{t}}^K + f(Y_{\underline{t}}^K)(t - \underline{t}) + g(Y_{\underline{t}}^K)(W(t) - W(\underline{t})), \quad  t \ge 0.
\end{equation}
Clearly, we have ${{\hat{Z}}^K}(t_n) = Y_{t_n}^K = {Y^K}(t_n)$ for all $n\ge 0$. Furthermore, from \eqref{K-scheme} and \eqref{a11}, for every $n \ge 1$ and almost every $\omega \in \Omega$, we have
\begin{equation} \label{a34}
	\mathop {\lim }\limits_{t \uparrow {t_n}(\omega)} {{\hat Z}^K}(t, \omega) = Z_{{t_n}}^K (\omega),\quad \mathop {\lim }\limits_{t \downarrow {t_n} (\omega)} {{\hat Z}^K}(t, \omega) = Y_{{t_n}}^K (\omega),\quad \text{a.s.}
\end{equation}
 
Since $\delta ( \cdot )$ is continuous and strictly positive, and $\{ {x \in {\mathbb{R}^d}:\left| x \right| \le K} \}$ is a bounded and closed set, we get $\delta _{\min }^{K}: = {\min _{|x| \le K}}{\delta  }(x) > 0$. Then, for any $\omega \in \Omega$,
\begin{equation*}
	 \delta _n^{K }(\omega ) \ge \delta _{\min }^{K } > 0, \quad  n \ge 0,
\end{equation*}
which implies
\begin{equation*}
	\mathop {\lim }\limits_{n \to \infty } {t_n}(\omega ) = \sum\limits_{n = 0}^\infty  {{\delta _n^{K }}(\omega )}  \ge \sum\limits_{n = 0}^\infty  {\delta _{\min }^{K}}  = \infty .
\end{equation*}
Hence, $T$ is attainable. 

\textbf{Step 2:} Estimation of ${\sup _{K > |{x_0}|}}{\sup _{0 \le t \le T}}\mathbb{E}{| {{{\hat Z}^K}(t)} |^p} \le {C_T}$.
For any integer $n \ge 0$, from \eqref{K-scheme} one obtains
	\begin{equation}  \label{a1}
		\begin{split}
			{\left| {Z_{{t_{n + 1}}}^K} \right|^2} =& {\left| {Y_{{t_n}}^K + f(Y_{{t_n}}^K){\delta _n^{K }} + g(Y_{{t_n}}^K)\Delta {W_n}} \right|^2} \\
			=& {\left| {{Y_{{t_n}}^K}} \right|^2} + 2\left\langle {{Y_{{t_n}}^K},f({Y_{{t_n}}^K})} \right\rangle \delta _n^{K }  + {\left| {f({Y_{{t_n}}^K})} \right|^2}{\left| {\delta _n^{K } } \right|^2} + {\left| {g(Y_{{t_n}}^K)\Delta {W_n}} \right|^2} \\
			&+ 2\left\langle {{Y_{{t_n}}^K} + f({Y_{{t_n}}^K})\delta _n^{K } ,g({Y_{{t_n}}^K})\Delta {W_n}} \right\rangle .
		\end{split}
	\end{equation}
	We therefore have
	\begin{equation}  \label{a6}
		\begin{split}
			{\left( {1 + {{\left| {Z_{{t_{n + 1}}}^K} \right|}^2}} \right)^{p/2}} = {\left( {1 + {{\left| {Y_{{t_n}}^K} \right|}^2}} \right)^{p/2}}{\left( {1 + \xi _n^{K }} \right)^{p/2}},
		\end{split}
	\end{equation}
	where
	\begin{equation}  \label{a26}
		\xi _n^{K } = \frac{{2\left\langle {Y_{{t_n}}^K,f(Y_{{t_n}}^K)} \right\rangle \delta _n^{K } + {{\left| {f(Y_{{t_n}}^K)} \right|}^2}{{\left( {\delta _n^{K }} \right)}^2} + {\left| {g(Y_{{t_n}}^K)\Delta {W_n}} \right|^2} + 2\left\langle {Y_{{t_n}}^K + f(Y_{{t_n}}^K)\delta _n^{K },g(Y_{{t_n}}^K)\Delta {W_n}} \right\rangle }}{{1 + {{\left| {Y_{{t_n}}^K} \right|}^2}}}.
	\end{equation}
	For the given constant $p \ge 2$, choose a non-negative integer $k$ such that $2k < p \le 2(k+1)$. It follows from \cite[Lemma 3.3]{yang2018explicit} that
	\begin{equation}  \label{sum1}
	\begin{split}
		&\mathbb{E}\left[ {{{\left( {1 + {{\left| {{Z_{{t_{n + 1}}}^K}} \right|}^2}} \right)}^{p/2}}|{{\cal F}_{{t_n}}}} \right] \\
		&\le {\left( {1 + {{\left| {{Y_{{t_n}}^K}} \right|}^2}} \right)^{p/2}}\left[ {1 + p/2\mathbb{E}(\xi _n^{K }|{{\cal F}_{{t_n}}}) + \frac{{p(p - 2)}}{8}\mathbb{E}({{(\xi _n^{K })}^2}|{{\cal F}_{{t_n}}}) + \mathbb{E}({{(\xi _n^{K })}^3}{P_k}(\xi _n^{K })|{{\cal F}_{{t_n}}})} \right], \quad \text{a.s.},
	\end{split}
	\end{equation}
	where $P_k(\cdot)$ represents a $k$th-order polynomial whose coefficients
	depend only on $p$. Using Assumption \ref{A-stepsize}, Remarks \ref{remark-filtration} and \ref{remark-steps} yields	
	\begin{equation*}
		\begin{split}  \label{xi1}
			&\mathbb{E}\left( {\xi _n^{K }|{{\cal F}_{{t_n}}}} \right) \\
			&= {\left( {1 + {{\left| {Y_{{t_n}}^K} \right|}^2}} \right)^{ - 1}}\mathbb{E}\Big[ {2\left\langle {Y_{{t_n}}^K,f(Y_{{t_n}}^K)} \right\rangle \delta _n^{K } + {{\left| {f(Y_{{t_n}}^K)} \right|}^2}{{\left( {\delta _n^{K }} \right)}^2} + {{\left| {g(Y_{{t_n}}^K)\Delta {W_n}} \right|}^2}}  \\
			&\quad + 2\left\langle {Y_{{t_n}}^K + f(Y_{{t_n}}^K){\delta _n^{K }},g(Y_{{t_n}}^K)\Delta {W_n}} \right\rangle |{\mathcal{F}_{{t_n}}} \Big] \\
			&= {\left( {1 + {{\left| {Y_{{t_n}}^K} \right|}^2}} \right)^{ - 1}}\left[ {2\left\langle {Y_{{t_n}}^K,f(Y_{{t_n}}^K)} \right\rangle  + {{\left| {g(Y_{{t_n}}^K)} \right|}^2}} \right]{\delta _n^{K }} + {\left( {1 + {{\left| {Y_{{t_n}}^K} \right|}^2}} \right)^{ - 1}}{\left| {f(Y_{{t_n}}^K)} \right|^2}\left( {\delta _n^{K }} \right)^2 \\
			&\le {\left( {1 + {{\left| {Y_{{t_n}}^K} \right|}^2}} \right)^{ - 1}}\left[ {2\left\langle {Y_{{t_n}}^K,f(Y_{{t_n}}^K)} \right\rangle  + {{\left| {g(Y_{{t_n}}^K)} \right|}^2}} \right]{\delta _n^{K }} + 2{\left( {\delta _n^{K }} \right)^{2(1 - \gamma )}} \quad \text{a.s.}
		\end{split}
	\end{equation*}
Similarly, using Assumption \ref{A-stepsize}, Remarks \ref{remark-filtration} and \ref{remark-steps}
we compute 
\begin{equation*}
\begin{split}
	&\mathbb{E}\left( {{{\left( {\xi _n^{K }} \right)}^2}|{\mathcal{F}_{{t_n}}}} \right) \le 4{\left( {1 + {{\left| {Y_{{t_n}}^K} \right|}^2}} \right)^{ - 2}}{\left| {{{\left( {Y_{{t_n}}^K} \right)}^{\rm T}}g(Y_{{t_n}}^K)} \right|^2}\delta _n^K + C{\left( {\delta _n^{K }} \right)^{2(1 - \gamma )}}  \quad \text{a.s.} \\
	&\mathbb{E}\left( {{{\left( {\xi _n^K} \right)}^3}{P_k}\left( {\xi _n^K} \right)|{\mathcal{F}_{{t_n}}}} \right) \le {C}{\left( {\delta _n^K} \right)^{2(1 - \gamma )}} \quad  \text{a.s.}
\end{split}
\end{equation*}
Substituting the above into \eqref{sum1}, using Assumption \ref{A2.1}, \eqref{a33}, the facts $\delta_n^K\le T$ and $\gamma \in (0, 1/3]$ yields
\begin{equation}  \label{a4} 
	\begin{split}
		&\mathbb{E}\left[ {{{\left( {1 + {{\left| {Z_{{t_{n + 1}}}^K} \right|}^2}} \right)}^{p/2}}|{\mathcal{F}_{{t_n}}}} \right] \\
		&\le {\left( {1 + {{\left| {Y_{{t_n}}^K} \right|}^2}} \right)^{p/2}}\Bigg[ {1 + C{{\left( {\delta _n^{K}} \right)}^{2(1 - \gamma )}}}  \\
		&\quad + p\delta _n^K\frac{{\left( {1 + {{\left| {Y_{{t_n}}^K} \right|}^2}} \right)\left( {2\left\langle {Y_{{t_n}}^K,f(Y_{{t_n}}^K)} \right\rangle  + {{\left| {g(Y_{{t_n}}^K)} \right|}^2}} \right) + (p - 2){{\left| {{{\left( {Y_{{t_n}}^K} \right)}^{\rm T}}g(Y_{{t_n}}^K)} \right|}^2}}}{{2{{\left( {1 + {{\left| {Y_{{t_n}}^K} \right|}^2}} \right)}^2}}} \Bigg] \\
		&\le \left( {1 + C{{\left( {\delta _n^{K}} \right)}^{2(1-\gamma)}}} \right){\left( {1 + {{\left| {Y_{{t_n}}^K} \right|}^2}} \right)^{p/2}} \\
		& \quad + \frac{{p{\delta _n^{K}}}}{2}{\left( {1 + {{\left| {Y_{{t_n}}^K} \right|}^2}} \right)^{p/2 - 1}}\left( {2\left\langle {Y_{{t_n}}^K,f(Y_{{t_n}}^K)} \right\rangle  + (p - 1){{\left| {g(Y_{{t_n}}^K)} \right|}^2}} \right) \\
		&\le (1 + C_T{\delta _n^{K}}){\left( {1 + {{\left| {Y_{{t_n}}^K} \right|}^2}} \right)^{p/2}} \\
		&\le (1 + C_T{\delta _n^{K}}){\left( {1 + {{\left| {Z_{{t_n}}^K} \right|}^2}} \right)^{p/2}} \quad \text{a.s.}
	\end{split}
\end{equation}
Using \eqref{a4} and \eqref{a22} we see 
\begin{equation}  \label{a28}
\begin{split}
	\mathbb{E}\left[ {{{\left( {1 + {{\left| {Z_{{t_{n + 1}}}^K} \right|}^2}} \right)}^{p/2}}|{{\cal F}_{{t_n}}}} \right] - {\left( {1 + {{\left| {Z_{{t_n}}^K} \right|}^2}} \right)^{p/2}} \le {C_T}\int_{{t_n}}^{{t_{n + 1}}} {{{\left( {1 + {{\left| {{Z^K}(s)} \right|}^2}} \right)}^{p/2}}ds}   \quad \text{a.s.}
\end{split}
\end{equation}
For any $t \in [0, T]$,
summing \eqref{a28} over multiple timesteps, taking expectations on both sides and using \eqref{a22} yields
\begin{equation*}
\begin{split}
	&\mathbb{E}\left[ {{{\left( {1 + {{\left| {Z^K (t)} \right|}^2}} \right)}^{p/2}}} \right] -{\left( {1 + {{\left| {{x_0}} \right|}^2}} \right)^{p/2}}  \\
	&\le  {C_T}\mathbb{E}\left( {\int_0^{\underline{t}} {{{\left( {1 + {{\left| {{Z^K}(s)} \right|}^2}} \right)}^{p/2}}ds} } \right) 
	\le  {C_T}\int_0^t {\mathbb{E}{{\left( {1 + {{\left| {{{Z}^K}(s)} \right|}^2}} \right)}^{p/2}}ds} \\
	&\le  {C_T}\int_0^t {\mathop {\sup }\limits_{0 \le u \le s} \mathbb{E}{{\left( {1 + {{\left| {{{Z}^K}(u)} \right|}^2}} \right)}^{p/2}}ds} .
\end{split}
\end{equation*} 
Taking the supremum on both sides and using Gronwall's inequality, we have
\begin{equation*}
	\mathop {\sup }\limits_{0 \le t \le T} \mathbb{E}\left[ {{{\left( {1 + {{\left| {{{ Z}^K}(t)} \right|}^2}} \right)}^{p/2}}} \right] \le {C_T} .
\end{equation*}
Therefore,
\begin{equation}  \label{a29}
	\mathop {\sup }\limits_{0 \le t \le T} \mathbb{E}{\left| {{{Z}^K}(t)} \right|^p} \le \mathop {\sup }\limits_{0 \le t \le T} \mathbb{E}\left[ {{{\left( {1 + {{\left| {{{Z}^K}(t)} \right|}^2}} \right)}^{p/2}}} \right] \le {C_T},
\end{equation}
where $C_T$ is a positive constant depending on $x_0, p, T$, but independent of $K$.

On the other hand, for the partial timestep from $\underline{t}$ to $t$, from \eqref{a11} we have
\begin{equation*}
\begin{split}
	{\left| {{\hat{Z}^K}(t)} \right|^2} =& {\left| {Y_{\underline{t}}^K} \right|^2} + 2\left\langle {Y_{\underline{t}}^K,f(Y_{\underline{t}}^K)} \right\rangle (t - \underline{t}) + {\left| {f(Y_{\underline{t}}^K)} \right|^2}{(t - \underline{t})^2} + {\left| {{{\left( {g(Y_{\underline{t}}^K)} \right)}}\left( {W(t) - W(\underline{t})} \right)} \right|^2} \\
	&+ 2\left\langle {Y_{\underline{t}}^K + f(Y_{\underline{t}}^K)(t - \underline{t}),g(Y_{\underline{t}}^K)(W(t) - W(\underline{t}))} \right\rangle .
\end{split}
\end{equation*}
Similar to \eqref{a6}-\eqref{a4} one has
\begin{equation}  \label{a3}
\begin{split}
	\mathbb{E}\left[ {{{\left( {1 + {{\left| {{\hat Z^K}(t)} \right|}^2}} \right)}^{p/2}}|{{\cal F}_{\underline{t}}}} \right] 
	&\le \left( {1 + C_T(t - \underline{t})} \right){\left( {1 + {{\left| {Y_{\underline{t}}^K} \right|}^2}} \right)^{p/2}} \\
	 &\le \left( {1 + C_T(t - \underline{t})} \right){\left( {1 + {{\left| {Z_{\underline{t}}^K} \right|}^2}} \right)^{p/2}} \\
	&\le {\left( {1 + {{\left| {Z_{\underline{t}}^K} \right|}^2}} \right)^{p/2}} + C_T\int_{\underline{t}}^t {{{\left( {1 + {{\left| {{{Z}^K}(s)} \right|}^2}} \right)}^{p/2}}ds}  \quad \text{a.s.}
\end{split}
\end{equation}
Summing \eqref{a28} over multiple timesteps, adding \eqref{a3} and then taking expectations on both sides gives
\begin{equation*}
\begin{split}
	\mathbb{E}\left[ {{{\left( {1 + {{\left| {{\hat Z^K}(t)} \right|}^2}} \right)}^{p/2}}} \right] \le {\left( {1 + {{\left| {{x_0}} \right|}^2}} \right)^{p/2}} + C_T\mathbb{E}\left[ {\int_0^t {{{\left( {1 + {{\left| {{{Z}^K}(s)} \right|}^2}} \right)}^{p/2}}ds} } \right].
\end{split}
\end{equation*}
Taking the supremum on both sides of the above inequality and using \eqref{a29} yields
\begin{equation*}  \label{a9}
\begin{split}
	\mathop {\sup }\limits_{0 \le t \le T} \mathbb{E}\left[ {{{\left( {1 + {{\left| {{\hat Z^K}(t)} \right|}^2}} \right)}^{p/2}}} \right] 
	\le& {\left( {1 + {{\left| {{x_0}} \right|}^2}} \right)^{p/2}} + C_T\int_0^T \mathbb{E}{{{\left( {1 + {{\left| {Z^K(t)} \right|}^2}} \right)}^{p/2}}dt} \\
	\le& {\left( {1 + {{\left| {{x_0}} \right|}^2}} \right)^{p/2}} + C_T\int_0^T {\mathop {\sup }\limits_{0 \le s \le t} \mathbb{E}{{\left( {1 + {{\left| {{Z}^K(s)} \right|}^2}} \right)}^{p/2}}dt} \\
	\le& C_T .
\end{split}
\end{equation*}
Therefore, 
\begin{equation}  \label{a23}
	\mathop {\sup }\limits_{K > |{x_0}|} \mathop {\sup }\limits_{0 \le t \le T} \mathbb{E}{\left| {{{\hat Z}^K}(t)} \right|^p} \le \mathop {\sup }\limits_{K > |{x_0}|} \mathop {\sup }\limits_{0 \le t \le T} \mathbb{E}\left[ {{{\left( {1 + {{\left| {{{\hat Z}^K}(t)} \right|}^2}} \right)}^{p/2}}} \right] \le {C_T} ,
\end{equation}
where $C_T$ is a positive constant depending on $x_0, p, T$, but independent of $K$.

\textbf{Step 3:} The estimation of ${\sup _{K > |{x_0}|}}\mathbb{E}({\sup _{0 \le t \le T}}|{{\hat Z}^K}(t){|^{\bar p}}) \le {C_T}$.
For any $t \in [0, T]$, from \eqref{a22} and \eqref{a11} we have
\begin{equation*}
	{{\hat{Z}}^K}(t) = Y_{\underline{t}}^K + \int_{\underline{t}}^t {f({{Y}^K}(s))ds}  + \int_{\underline{t}}^t {g({{Y}^K}(s))dW(s)} .
\end{equation*}
Using It\^o's formula, applying Assumption \ref{A2.1} and \eqref{a33} one has
\begin{equation}  \label{a31}
	\begin{split}
		{\left| {{{\hat{Z}}^K}(t)} \right|^{\bar p}} \le& {\left| {Y_{\underline{t}}^K} \right|^{\bar p}} + \int_{\underline{t}}^t {{\bar p}{{\left| {{{\hat{Z}}^K}(s)} \right|}^{{\bar p} - 2}}\left[ {\left\langle {{{\hat{Z}}^K}(s),f({{Y}^K}(s))} \right\rangle  + \frac{{{\bar p} - 1}}{2}{{\left| {g({{Y}^K}(s))} \right|}^2}} \right]ds} \\
		&+ \int_{\underline{t}}^t {\bar p{{\left| {{{\hat Z}^K}(s)} \right|}^{\bar p - 2}}{{\left( {{{\hat Z}^K}(s)} \right)}^{\rm T}}g({Y^K}(s))dW(s)}  \\
		\le& {\left| {Z_{\underline{t}}^K} \right|^{\bar p}} + \int_{\underline{t}}^t {{\bar p}{{\left| {{{\hat{Z}}^K}(s)} \right|}^{{\bar p} - 2}}\left[ {\left\langle {{{Y}^K}(s),f({{Y}^K}(s))} \right\rangle  + \frac{{{\bar p} - 1}}{2}{{\left| {g({{Y}^K}(s))} \right|}^2}} \right]ds} \\
		&+ \int_{\underline{t}}^t {{\bar p}{{\left| {{{\hat{Z}}^K}(s)} \right|}^{{\bar p} - 2}}\left\langle {{{\hat{Z}}^K}(s) - {{Y}^K}(s),f({{Y}^K}(s))} \right\rangle ds} \\
		&+ \int_{\underline{t}}^t {\bar p{{\left| {{{\hat Z}^K}(s)} \right|}^{\bar p - 2}}{{\left( {{{\hat Z}^K}(s)} \right)}^{\rm T}}g({Y^K}(s))dW(s)} \\
		\le& {\left| {Z_{\underline{t}}^K} \right|^{\bar p}} + \int_{\underline{t}}^t {\bar p}{{{\left| {{{\hat{Z}}^K}(s)} \right|}^{{\bar p} - 2}}\left( {K_1 {{\left| {{{Y}^K}(s)} \right|}^2} + K_2 } \right)ds}  \\
		&+ \int_{\underline{t}}^t {{\bar p}{{\left| {{{\hat{Z}}^K}(s)} \right|}^{{\bar p} - 2}}\left\langle {{{\hat{Z}}^K}(s) - {{Y}^K}(s),f({{Y}^K}(s))} \right\rangle ds} \\
		&+ \int_{\underline{t}}^t {\bar p{{\left| {{{\hat Z}^K}(s)} \right|}^{\bar p - 2}}{{\left( {{{\hat Z}^K}(s)} \right)}^{\rm T}}g({Y^K}(s))dW(s)}.
	\end{split}
\end{equation}
For each fixed $n \ge 0$ and almost every $\omega \in \Omega$, taking the limit ${t \uparrow {t_{n+1} (\omega)}}$ on both sides of \eqref{a31} and using \eqref{a34} yields
\begin{equation}  \label{a35}
	\begin{split}
		{\left| {Z_{{t_{n + 1}}}^K} \right|^{\bar p}} \le& {\left| {Z_{{t_{n}}}^K} \right|^{\bar p}} + \int_{{t_{n}}}^{{t_{n+1}}} {\bar p} {\left| {{{\hat Z}^K}(s)} \right|^{\bar p - 2}}\left( {{K_1}{{\left| {{Y^K}(s)} \right|}^2} + {K_2}} \right)ds \\
		&+ \int_{{t_{n}}}^{{t_{n+1}}} {\bar p{{\left| {{{\hat Z}^K}(s)} \right|}^{\bar p - 2}}\left\langle {{{\hat Z}^K}(s) - {Y^K}(s),f({Y^K}(s))} \right\rangle ds} \\
		&+ \int_{{t_{n}}}^{{t_{n+1}}} {\bar p{{\left| {{{\hat Z}^K}(s)} \right|}^{\bar p - 2}}{{\left( {{{\hat Z}^K}(s)} \right)}^{\rm T}}g({Y^K}(s))dW(s)} .
	\end{split}
\end{equation}
Summing \eqref{a35} over multiple timesteps and adding \eqref{a31} we obtain
\begin{equation*}
\begin{split}
	{\left| {{{\hat Z}^K}(t)} \right|^{\bar p}} \le& {\left| {{x_0}} \right|^{\bar p}} + \int_0^t {\bar p} {\left| {{{\hat Z}^K}(s)} \right|^{\bar p - 2}}\left( {{K_1}{{\left| {{Y^K}(s)} \right|}^2} + {K_2}} \right)ds \\
	&+ \int_0^t {\bar p{{\left| {{{\hat Z}^K}(s)} \right|}^{\bar p - 2}}\left\langle {{{\hat Z}^K}(s) - {Y^K}(s),f({Y^K}(s))} \right\rangle ds}  \\
	&+ \int_{{0}}^t {\bar p{{\left| {{{\hat Z}^K}(s)} \right|}^{\bar p - 2}}{{\left( {{{\hat Z}^K}(s)} \right)}^{\rm T}}g({Y^K}(s))dW(s)}. 
\end{split}
\end{equation*}
Taking the supremum on both sides and then taking expectations we have
\begin{equation}  \label{Jsum}
	\begin{split}
		\mathbb{E}\left( {\mathop {\sup }\limits_{0 \le t \le T} {{\left| {{{\hat{Z}}^K}(t)} \right|}^{\bar p}}} \right) \le {\left| {{x_0}} \right|^{\bar p}} + {J_1} + {J_2} + {J_3},
	\end{split}
\end{equation}
where 
\begin{equation*}
	\begin{split}
		{J_1} =& \bar p\mathbb{E}\left[ {\mathop {\sup }\limits_{0 \le t \le T} \int_0^{{t}} {{{\left| {{{\hat Z}^K}(s)} \right|}^{\bar p - 2}}\left( {{K_1}{{\left| {{Y^K}(s)} \right|}^2} + {K_2}} \right)ds} } \right] \\
		{J_2} =& \bar p\mathbb{E}\left[ {\mathop {\sup }\limits_{0 \le t \le T} \int_0^{{t}} {{{\left| {{{\hat Z}^K}(s)} \right|}^{\bar p - 2}}\left\langle {{{\hat Z}^K}(s) - {Y^K}(s),f({Y^K}(s))} \right\rangle ds} } \right] \\
		{J_3} =& \mathbb{E}\left[ {\mathop {\sup }\limits_{0 \le t \le T} \left| {\int_0^t {\bar p{{\left| {{{\hat Z}^K}(s)} \right|}^{\bar p - 2}}{{\left( {{{\hat Z}^K}(s)} \right)}^{\rm T}}g({Y^K}(s))dW(s)} } \right|} \right] .
	\end{split}
\end{equation*}
For $J_1$, using Young's inequality, \eqref{a33}, \eqref{a22}, \eqref{a29} and \eqref{a23} yields
\begin{equation}  \label{J1}
\begin{split} 
	J_1
	\le& \bar p\mathbb{E}\left[ {\int_0^T {{{\left| {{{\hat Z}^K}(s)} \right|}^{\bar p - 2}}\left( {{K_1}{{\left| {{Y^K}(s)} \right|}^2} + {K_2}} \right)ds} } \right] \\
	\le& (\bar p - 2)({K_1} + {K_2})\int_0^T {\mathbb{E}{{\left| {{{\hat Z}^K}(s)} \right|}^{\bar p}}ds}  + 2{K_1}\int_0^T {\mathbb{E}{{\left| {{Y^K}(s)} \right|}^{\bar p}}ds}  + 2{K_2}T \\
	\le& C\int_0^T {{{\left( {\mathop {\sup }\limits_{0 \le u \le s} \mathbb{E}{{\left| {{{\hat Z}^K}(u)} \right|}^p}} \right)}^{\frac{{\bar p}}{p}}}ds} + C_T \\
	\le& {C_T}.
\end{split}
\end{equation}
For $J_2$, for any $0 \le s \le T$, using \eqref{a22} and the fact $|s - \underline{s}| \le \delta_{n_s}$, Assumption \ref{A-stepsize} and Remark \ref{remark-steps} yields
\begin{equation}  \label{a16}
\begin{split}
	&\mathbb{E}\left( {{{\left| {{{\hat Z}^K}(s) - {Y^K}(s)} \right|}^{\bar p}}|{{\cal F}_{{\underline{s}}}}} \right)  \\
	&\le {2^{{\bar p} - 1}}{\left| {f(Y_{{\underline{s}}}^K)} \right|^{\bar p}}\mathbb{E}\left( {{{\left| {s - {\underline{s}}} \right|}^{\bar p}}|{\mathcal{F}_{{\underline{s}}}}} \right) + {2^{{\bar p} - 1}}{\left| {g(Y_{{\underline{s}}}^K)} \right|^{\bar p}}\mathbb{E}\left( {{{\left| {W(s) - W({\underline{s}})} \right|}^{\bar p}}|{\mathcal{F}_{{\underline{s}}}}} \right) \\
	&\le C{\left( {\delta _{{n_s}}^{K}} \right)^{\bar p(1 - \gamma )}}{\left( {1 + \left| {Y_{{\underline{s}}}^K} \right|} \right)^{\bar p}} + C{\left( {\delta _{{n_s}}^{K}} \right)^{\frac{{\bar p}}{2}(1 - \gamma )}}{\left( {1 + \left| {Y_{{\underline{s}}}^K} \right|} \right)^{\bar p}} \\
	&\le C_T{\left( {\delta _{{n_s}}^{K}} \right)^{\frac{{\bar p}}{2}(1 - \gamma )}}{\left( {1 + \left| {Y_{{\underline{s}}}^K} \right|} \right)^{\bar p}} \quad \text{a.s.}
\end{split}
\end{equation}
The last inequality holds because ${(\delta _{{n_s}}^K)^{\bar p(1 - \gamma )}} \le C_T{(\delta _{{n_s}}^K)^{\bar p(1 - \gamma )/2}}$ follows from the fact $\delta(x) \le T $ for any $x \in \mathbb{R}^d$.
Moreover, using Assumption \ref{A-stepsize} we have
\begin{equation}   \label{a17}
	\mathbb{E}\left( {{{\left| {f({{Y}^K}(s))} \right|}^{\bar p}}|{{\cal F}_{{\underline{s}}}}} \right) \le {\left( {\delta _{{n_s}}^{K}} \right)^{ - \bar p\gamma }}{\left( {1 + \left| {Y_{{\underline{s}}}^K} \right|} \right)^{\bar p}} \quad \text{a.s.}
\end{equation}
Since $\gamma \in (0, 1/3]$, applying H\"older's inequality for conditional expectations, from \eqref{a16} and \eqref{a17} we obtain 
\begin{equation}  \label{a27}
\begin{split}
	&\mathbb{E}\left( {{{\left| {{{\hat Z}^K}(s) - {Y^K}(s)} \right|}^{\frac{{\bar p}}{2}}}{{\left| {f({Y^K}(s))} \right|}^{\frac{{\bar p}}{2}}}|{\mathcal{F}_{{\underline{s}}}}} \right) \\
	&\le {\left[ {\mathbb{E}\left( {{{\left| {{{\hat Z}^K}(s) - {Y^K}(s)} \right|}^{\bar p}}|{\mathcal{F}_{{\underline{s}}}}} \right)} \right]^{\frac{1}{2}}}{\left[ {\mathbb{E}\left( {{{\left| {f({Y^K}(s))} \right|}^{\bar p}}|{\mathcal{F}_{{\underline{s}}}}} \right)} \right]^{\frac{1}{2}}} \\
	&\le {C_T}{\left( {\delta _{{n_s}}^K} \right)^{\bar p(1 - 3\gamma )/4}}{\left( {1 + \left| {Y_{{\underline{s}}}^K} \right|} \right)^{\bar p}} \\ 
	&\le {C_T}{\left( {1 + \left| Y_{{\underline{s}}}^K \right|} \right)^{\bar p}} \quad \text{a.s.}
\end{split}
\end{equation}
Then, taking expectations on both sides, using \eqref{a33} and \eqref{a22} one computes
\begin{equation}   \label{a12}
\begin{split}
	&\mathbb{E}\left( {{{\left| {{{\hat Z}^K}(s) - {Y^K}(s)} \right|}^{\frac{{\bar p}}{2}}}{{\left| {f({Y^K}(s))} \right|}^{\frac{{\bar p}}{2}}}} \right) \\
	&\le {C_T}\mathbb{E}{\left( {1 + \left| Y_{{\underline{s}}}^K \right|} \right)^{\bar p}} \\
	&\le {C_T}\left( {1 + \mathbb{E}{{\left| Z_{{\underline{s}}}^K \right|}^{\bar p}}} \right) 
	= {C_T}\left( {1 + \mathbb{E}{{\left| {{Z^K}(s)} \right|}^{\bar p}}} \right) .
\end{split}
\end{equation}
Hence, using Young's inequality, H\"older's inequality, \eqref{a12}, \eqref{a29} and \eqref{a23} we have
\begin{equation}   \label{J21}
	\begin{split}
		J_2
		\le& \bar p\mathbb{E}\left[ {\int_0^T {{{\left| {{{\hat Z}^K}(s)} \right|}^{\bar p - 2}}\left| {{{\hat Z}^K}(s) - {Y^K}(s)} \right|\left| {f({Y^K}(s))} \right|ds} } \right] \\
		\le& (\bar p - 2)\int_0^T {\mathbb{E}{{\left| {{{\hat Z}^K}(s)} \right|}^{\bar p}}ds}  + 2\int_0^T {\mathbb{E}\left( {{{\left| {{{\hat Z}^K}(s) - {Y^K}(s)} \right|}^{\frac{{\bar p}}{2}}}{{\left| {f({Y^K}(s))} \right|}^{\frac{{\bar p}}{2}}}} \right)ds}  \\
		\le& {C_T} + {C_T}\int_0^T {\mathop {\sup }\limits_{0 \le u \le s} {{\left( {\mathbb{E}{{\left| {{{\hat Z}^K}(u)} \right|}^p}} \right)}^{\frac{{\bar p}}{p}}}ds}  \le C_T.
	\end{split}
\end{equation}
For $J_3$, using the Burkholder--Davis--Gundy's inequality (BDG's inequality, see e.g., \cite[Theorem 1.7.3]{klebaner2012introduction}, p. 201), Young's inequality and Assumption \ref{Ag} yields
\begin{equation}  \label{J31}
\begin{split}
	J_3 
	\le& 4\sqrt 2 \bar p\mathbb{E}\left( {{{\left| {\int_0^T {{{\left| {{{\hat Z}^K}(t)} \right|}^{2\bar p - 2}}{{\left| {g({Y^K}(t))} \right|}^2}dt} } \right|}^{\frac{1}{2}}}} \right) \\
	\le& 4\sqrt 2 \bar p\mathbb{E}\left[ {{{\left( {\left( {\mathop {\sup }\limits_{0 \le t \le T} {{\left| {{{\hat Z}^K}(t)} \right|}^{\bar p}}} \right)\int_0^T {{{\left| {{{\hat Z}^K}(t)} \right|}^{\bar p - 2}}{{\left| {g({Y^K}(t))} \right|}^2}dt} } \right)}^{\frac{1}{2}}}} \right]  \\
	\le& \frac{1}{2}\mathbb{E}\left( {\mathop {\sup }\limits_{0 \le t \le T} {{\left| {{{\hat Z}^K}(t)} \right|}^{\bar p}}} \right) + 16{{\bar p}^2}{K_3}\mathbb{E}\left[ {\int_0^T {{{\left| {{{\hat Z}^K}(t)} \right|}^{\bar p - 2}}\left( {1 + {{\left| {{Y^K}(t)} \right|}^l}} \right)dt} } \right] \\
	\le& \frac{1}{2}\mathbb{E}\left( {\mathop {\sup }\limits_{0 \le t \le T} {{\left| {{{\hat Z}^K}(t)} \right|}^{\bar p}}} \right) + C\int_0^T {\mathbb{E}{{\left| {{{\hat Z}^K}(t)} \right|}^p}dt}  + C\int_0^T {\mathbb{E}{{\left| {{Y^K}(t)} \right|}^p}dt}  + {C_T} .
\end{split}
\end{equation}
Using \eqref{a33}, \eqref{a22}, \eqref{a29} and \eqref{a23} one has
\begin{equation}  \label{J3}
\begin{split}
	{J_3}  \le \frac{1}{2}\mathbb{E}\left( {\mathop {\sup }\limits_{0 \le t \le T} {{\left| {{{\hat Z}^K}(t)} \right|}^{\bar p}}} \right)  + {C_T} .
\end{split}
\end{equation}
Substituting the above into \eqref{Jsum} yields
\begin{equation*}
	\mathbb{E}\left( {\mathop {\sup }\limits_{0 \le t \le T} {{\left| {{{\hat Z}^K}(t)} \right|}^{\bar p}}} \right) \le {\left| {{x_0}} \right|^{\bar p}} +  \frac{1}{2}\mathbb{E}\left( {\mathop {\sup }\limits_{0 \le t \le T} {{\left| {{{\hat Z}^K}(t)} \right|}^{\bar p}}} \right) + {C_T},
\end{equation*}
which implies the desired result
\begin{equation}  \label{a24}
	\mathbb{E}\left( {\mathop {\sup }\limits_{0 \le t \le T} {{\left| {{{\hat Z}^K}(t)} \right|}^{\bar p}}} \right) \le {C_T},
\end{equation}
where $C_T$ is a positive constant independent of $K$. 

\textbf{Step 4:} The estimation of $\mathbb{E}( {{{\sup }_{0 \le t \le T}}{{| {\hat Y(t)} |}^{\bar p}}} ) \le {C_T}$.
Define 
\begin{equation}  \label{a19}
	{\hat Y^K}(t) = {\pi _K}({{\hat{Z}}^K}(t)), \quad t \ge 0.
\end{equation}
Since $|\pi_K(x)| \le |x|$ for any $x \in \mathbb{R}^d$, it follows from \eqref{a24} and \eqref{a19} that
\begin{equation}  \label{a25}
	\mathbb{E}\left( {\mathop {\sup }\limits_{0 \le t \le T} {{\left| {{{\hat Y}^K}(t)} \right|}^{\bar p}}} \right) \le \mathbb{E}\left( {\mathop {\sup }\limits_{0 \le t \le T} {{\left| {{{\hat Z}^K}(t)} \right|}^{\bar p}}} \right) \le {C_T} ,
\end{equation}
where $C_T$ is independent of $K$. 
By the construction of the schemes in \eqref{scheme} and \eqref{K-scheme}, the two numerical solutions coincide before leaving the ball of radius $K$. In particular,
\begin{equation*}
	\left\{ {\omega :\mathop {\sup }\limits_{0 \le t \le T} \left| {{{\hat Y}^K}(t,\omega )} \right| < K} \right\} = \left\{ {\omega :\mathop {\sup }\limits_{0 \le t \le T} \left| {\hat Y(t,\omega )} \right| < K} \right\},
\end{equation*}
and on this event, 
\begin{equation*}
	{{\hat Y}^K}(t) = \hat Y(t), \quad  0 \le t \le T. 
\end{equation*}
Consequently, Markov's inequality gives
\begin{equation*}
	\mathbb{P}\left( {\mathop {\sup }\limits_{0 \le t \le T} {{\left| {\hat Y(t)} \right|}} < K} \right) = \mathbb{P}\left( {\mathop {\sup }\limits_{0 \le t \le T} {{\left| {{{\hat Y}^K}(t)} \right|}} < K} \right) \ge 1 - \frac{{\mathbb{E}\left( {\mathop {\sup }\limits_{0 \le t \le T} {{\left| {{{\hat Y}^K}(t)} \right|}^{\bar p}}} \right)}}{{{K^{\bar p}}}}.
\end{equation*}
Letting $K \to \infty$ on both sides, and using continuity of probability from below, we have
\begin{equation*}
\begin{split}
	\mathbb{P}\left( {\mathop {\sup }\limits_{0 \le t \le T} {{\left| {\hat Y(t)} \right|}} < \infty } \right) =& \mathbb{P}\left( {\bigcup\limits_{K \in {\mathbb{N}^ + },K > |{x_0}|}^\infty  {\left\{ {\mathop {\sup }\limits_{0 \le t \le T} {{\left| {\hat Y(t)} \right|}} < K} \right\}} } \right) \\
	=& \mathop {\lim }\limits_{K \to \infty } \mathbb{P}\left( {\mathop {\sup }\limits_{0 \le t \le T} {{\left| {\hat Y(t)} \right|}} < K} \right) = 1,
\end{split}
\end{equation*}
which implies that ${\sup _{0 \le t \le T}}| {\hat Y(t)}| < \infty $ a.s., that is, for almost all $\omega \in \Omega$, there exists a finite random variable $M_T(\omega)$ such that
\begin{equation}  \label{a14}
	\mathop {\sup }\limits_{0 \le t \le T} \left| {\hat Y(t)} \right| \le M_T(\omega ) .
\end{equation}
Since $\delta ( \cdot )$ is continuous and strictly positive, we get ${\delta _{\min }} (\omega ) = {\inf _{|x| \le M_T(\omega )}}\delta (x) > 0$. Then, 
\begin{equation*}
	{\delta _n}(\omega ) \ge {\delta _{\min }}(\omega ) > 0 ,
\end{equation*}
which implies
\begin{equation*}
	\mathop {\lim }\limits_{n \to \infty } {t_n}(\omega ) = \sum\limits_{n = 0}^\infty  {{\delta _n}(\omega )}  \ge \sum\limits_{n = 0}^\infty  {{\delta _{\min }}(\omega )}  = \infty .
\end{equation*}
Thus, $T$ is almost surely attainable.

Moreover, for almost every $\omega$, whenever $K > M_T(\omega)$, the consistency of the two schemes implies
\begin{equation*}
	{{\hat Y}^K}(t,\omega ) = \hat Y(t,\omega ), \quad 0 \le t \le T.
\end{equation*}
Hence,
\begin{equation*}
	\mathop {\lim }\limits_{K \to \infty } \mathop {\sup }\limits_{0 \le t \le T} {\left| {{{\hat Y}^K}(t,\omega )} \right|^{\bar p}} = \mathop {\sup }\limits_{0 \le t \le T} {\left| {\hat Y(t,\omega )} \right|^{\bar p}}.
\end{equation*}
Finally, Fatou's lemma gives
\begin{equation*}
	\mathbb{E}\left( {\mathop {\sup }\limits_{0 \le t \le T} {{\left| {\hat Y(t)} \right|}^{\bar p}}} \right) \le \mathop {\lim \inf }\limits_{K \to \infty } \mathbb{E}\left( {\mathop {\sup }\limits_{0 \le t \le T} {{\left| {{{\hat Y}^K}(t)} \right|}^{\bar p}}} \right) \le {C_T} .
\end{equation*}
The proof is therefore complete.
\end{proof}

\begin{remark}  \label{rem-a1}
We highlight two important observations regarding the adaptive scheme:
\begin{itemize}
\item[(i)] We emphasize that the estimate established in \eqref{a29}
\begin{equation*}
	\mathop {\sup }\limits_{K > |{x_0}|} \sup_{0 \le t \le T} \mathbb{E}\bigl| Z_{\underline{t}}^K \bigr|^p
	\le C_T,
\end{equation*}
is essential. Unlike the boundedness analysis for the fixed-step numerical schemes, such as \cite{mao2015the, sabanis2016euler}, this estimate cannot be inferred directly from
\begin{equation*}
	\mathop {\sup }\limits_{K > |{x_0}|} \sup_{0 \le t \le T} \mathbb{E}\bigl| \hat Z^K(t) \bigr|^p \le C_T
\end{equation*}
Indeed, the discretization $\underline{t}$ is random and may take different values on different sample paths. Therefore, a moment bound at each fixed time does not automatically yield a moment bound at the randomly selected time $\underline{t}$. Hence, the $p$th moment bound for $Z_{\underline{t}}^K$ must be established separately.

\item[(ii)] If the diffusion coefficient $g(\cdot)$ satisfies a linear growth condition, i.e., $l = 2$ in Assumption \ref{Ag}, one derives from \eqref{a16}-\eqref{a27} that the admissible range of $\gamma$ can be extended to $(0, 1/2]$.
\end{itemize}
\end{remark}

\begin{remark}
	Fang and Giles \cite{fang2020adaptive} proposed an adaptive timestep function of the form 
	\begin{equation}  \label{fang-stepsize}
		\left\langle {x,f(x)} \right\rangle  + \frac{1}{2}\delta (x){\left| {f(x)} \right|^2} \le \alpha {\left| x \right|^2} + \beta,
	\end{equation}
	where $\delta (\cdot) : \mathbb{R}^d \to \mathbb{R}_+$ is continuous and strictly positive, and $\alpha, \beta $ are positive constants. This approach exploits the inner-product term (which, for instance, provides dissipativity when $f(x) = x - x^3$) together with the adaptive timestep $\delta(\cdot)$ to control the superlinear growth of the drift coefficient $f(\cdot)$. The diffusion coefficient $g(\cdot)$ satisfies a linear growth condition. Specific time-stepping rules are proposed for SDEs 
	\begin{equation}  \label{re-ab2}
		\delta (x) = \frac{{1 \vee {{\left| x \right|}^2}}}{{1 \vee {{\left| {f(x)} \right|}^2}}} .
	\end{equation}
	One computes that for any $x \in \mathbb{R}^d$,
	\begin{equation*}
		\begin{split}
			{\left| {f(x)} \right|^2} \le& 1 \vee {\left| {f(x)} \right|^2} = \frac{{1 \vee {{\left| x \right|}^2}}}{{\delta (x)}} \le {\left( {\delta (x)} \right)^{ - 1}}{\left( {1 + \left| x \right|} \right)^2}.
		\end{split}
	\end{equation*}
	Hence, by Remark \ref{rem-a1}, when the diffusion coefficient satisfies a linear growth condition, Assumption \ref{A-stepsize} is satisfied with $\gamma = 1/2$.
	
	In addition, an admissible time-stepping strategy for an adaptive EM scheme with a tamed backstop is proposed in \cite[Definition 2.2]{kelly2018adaptive}, that is, there exist real non-negative constants $R_1, R_2 < \infty$ such that whenever $\delta_{\min} < \delta_n < \delta_{\max}$,
	\begin{equation}  \label{re-ab3}
		{\left| {f({Y_{{t_n}}})} \right|^2} \le {R_1} + {R_2}{\left| {{Y_{{t_n}}}} \right|^2}, \quad n = 0,1,...
	\end{equation}	
	Setting ${\delta _{\max }} \le {({R_1} \vee {R_2})^{ - 1}}$, then
	\begin{equation*}
		|f(Y_{t_n})|^2 \le (R_1 \vee R_2)(1+|Y_{t_n}|^2)
		\le \delta_{\max}^{-1}(1+|x|^2)
		\le \delta_n^{-1}(1+|Y_{t_n}|)^2.
	\end{equation*}
Thus, for the adaptive timesteps satisfying $\delta_{\min} < \delta_n < \delta_{\max}$, Assumption \ref{A-stepsize} holds with $\gamma = 1/2$.

\end{remark}


\section{Strong convergence and convergence rate in finite time intervals}  \label{sec4}

For fixed-step numerical methods, strong convergence is typically analyzed by letting uniform stepsize $\delta$ tend to zero (see e.g., \cite[Theorem 2.2]{higham2002strong}, \cite[Theorem 3.5]{mao2015the}, \cite[Theorem 1]{sabanis2016euler}, \cite[Theorem 3.3]{li2019explicit}). However, the primary issue is that adaptive methods do not necessarily drive the time steps to zero \cite{lamba2007an}. To tackle this problem, we introduce a refined timestep function $\delta^\Delta(\cdot)$, indexed by a refinement parameter $\Delta \in (0, 1]$. The strong convergence of the adaptive time-stepping EM scheme is then established as $\Delta \to 0$.

\subsection{Strong convergence}
\begin{assumption}  \label{A-Delta}
	For each $\Delta \in (0,1]$, the refined timestep function ${\delta ^\Delta }:{\mathbb{R}^d} \to {\mathbb{R}_ + }$ is continuous and satisfies
	\begin{equation*}
		0 < {\delta ^\Delta }(x) \le \min \{ \Delta T,\delta (x)\} , \quad  x \in \mathbb{R}^d.
	\end{equation*}
\end{assumption}

With this assumption in hand, we present strong convergence of scheme \eqref{scheme} with timestep function $\delta^\Delta (\cdot)$.

\begin{theorem}  \label{th-convergence}
Let Assumptions \ref{A2.1}, \ref{Ag}, \ref{A-stepsize} and \ref{A-Delta} hold and assume that $p > l$. Then for any $q \in [2, p+2-l)$,
	\begin{equation*}
		\mathop {\lim }\limits_{\Delta  \to 0} \mathbb{E}\left( {\mathop {\sup }\limits_{0 \le t \le T} {{\left| {X(t) - \hat Y(t)} \right|}^q}} \right) = 0, \quad  \forall T \ge 0. 
	\end{equation*}
\end{theorem}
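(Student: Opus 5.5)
We will use the standard localization argument of Higham--Mao--Stuart, as adapted to the locally Lipschitz setting in \cite{mao2015the}. The idea is to split the error on the event where both $X$ and $\hat Y$ stay in a ball of radius $R$ and on its complement. Define the stopping times
\[
\tau_R=\inf\{t\ge 0:|X(t)|\ge R\}, \qquad \rho_R=\inf\{t\ge0:|\hat Y(t)|\ge R\}, \qquad \theta_R=\tau_R\wedge\rho_R,
\]
and write $e(t)=X(t)-\hat Y(t)$. Fix $q\in[2,\bar p)$ with $\bar p=p+2-l$. Young's inequality gives, for any $\eta>0$,
\[
\mathbb{E}\Bigl(\sup_{0\le t\le T}|e(t)|^q\Bigr)\le \mathbb{E}\Bigl(\sup_{0\le t\le T}|e(t\wedge\theta_R)|^q\Bigr)+\frac{q\eta}{\bar p}\,\mathbb{E}\Bigl(\sup_{0\le t\le T}|e(t)|^{\bar p}\Bigr)+\frac{\bar p-q}{\bar p\,\eta^{q/(\bar p-q)}}\,\mathbb{P}(\theta_R\le T).
\]
The middle term is bounded by $C_T\eta$, using Lemma \ref{EXt} and Theorem \ref{th2.1}. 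Since Assumption \ref{A-Delta} keeps $\delta^\Delta\le\delta$, Theorem \ref{th2.1} applies to the refined scheme with $C_T$ independent of $\Delta$. By Markov's inequality and the same two bounds, $\mathbb{P}(\theta_R\le T)\le C_T/R^{\bar p}$. So, given $\varepsilon>0$, we first choose $\eta$ small and then $R$ large, making these two terms each at most $\varepsilon/3$ uniformly in $\Delta$.

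It remains to show that, for fixed $R$, the stopped error tends to zero as $\Delta\to0$. By \eqref{continuous2},
\[
e(t\wedge\theta_R)=\int_0^{t\wedge\theta_R}\bigl(f(X(s))-f(Y(s))\bigr)ds+\int_0^{t\wedge\theta_R}\bigl(g(X(s))-g(Y(s))\bigr)dW(s).
\]
For $s<\theta_R$ we have $|Y(s)|=|\hat Y(\underline s)|\le R$, which is why we stop $\hat Y$ and not $Y$. So the local Lipschitz bound \eqref{llc} applies with constant $C_R$, and
\[
|f(X)-f(Y)|\le C_R\bigl(|X-\hat Y|+|\hat Y-Y|\bigr),
\]
with the same bound for $g$. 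Hölder's inequality and the BDG inequality then give
\[
\mathbb{E}\sup_{u\le t}|e(u\wedge\theta_R)|^q\le C_{R,T}\int_0^t\mathbb{E}\sup_{u\le s}|e(u\wedge\theta_R)|^q\,ds+C_{R,T}\int_0^T\mathbb{E}\bigl(|\hat Y(s)-Y(s)|^q\mathbf 1_{\{s<\theta_R\}}\bigr)ds.
\]
Gronwall's inequality reduces the problem to bounding the last integral.

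The main obstacle is this one-step bound, because the grid is random. We condition on $\mathcal F_{\underline s}$ and use Remark \ref{remark-steps}. On $\{s<\theta_R\}$ we have $|Y_{\underline s}|\le R$, so both $|f(Y_{\underline s})|$ and $|g(Y_{\underline s})|$ are bounded by $C_R$ by continuity. This gives
\[
\mathbb{E}\bigl(|\hat Y(s)-Y(s)|^q\mathbf 1_{\{s<\theta_R\}}\bigm|\mathcal F_{\underline s}\bigr)\le C_R\bigl((\delta^\Delta_{n_s})^q+(\delta^\Delta_{n_s})^{q/2}\bigr)\le C_{R,T}\,\Delta^{q/2},
\]
where the last step uses $\delta^\Delta\le\Delta T$. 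Two points need care here. First, the indicator $\mathbf 1_{\{s<\theta_R\}}$ is not $\mathcal F_{\underline s}$-measurable, so we bound it by $\mathbf 1_{\{|Y_{\underline s}|\le R\}}$, which is $\mathcal F_{\underline s}$-measurable. Second, conditioning at the random time $\underline s$ has to be justified, as in \eqref{a16}. The stopped error is therefore at most $C_{R,T}\Delta^{q/2}$, and we take $\Delta$ small enough that it is below $\varepsilon/3$.

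Alternatively, we could use Assumption \ref{A-stepsize} directly, as in \eqref{a16}, without stopping $Y$. That route would give a bound involving $(\delta^\Delta)^{(1-\gamma)q/2}(1+|Y|)^q$, which also tends to zero, provided the resulting moment orders stay at most $\bar p$.
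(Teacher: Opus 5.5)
Your proposal is correct and follows the route the paper indicates, which is the Higham--Mao--Stuart localization: stop both $X$ and $\hat Y$, split off the complement event using Young's inequality together with the $\Delta$-uniform $\bar p$-moment bounds from Lemma \ref{EXt} and Theorem \ref{th2.1}, then apply BDG and Gronwall to the stopped error. The only adaptive-specific inputs are $t-\underline{t}\le \Delta T$ and the conditional Brownian-increment moment bound at the random time $\underline{t}$; your remark that Theorem \ref{th2.1} holds for $\delta^\Delta$ with a $\Delta$-independent constant (because $\delta^\Delta\le\delta$ and $\delta^\Delta\le T$) is exactly what makes the localization terms uniform in $\Delta$.
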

The proof is essentially identical to the uniform timestep EM analysis in \cite[Theorem 2.2]{higham2002strong}. The only change required by the use of an adaptive timestep is to note that 
$t- \underline{t} \le \Delta T$, and that $\mathbb{E}\left( {{{\left| {W(t) - W(\underline{t})} \right|}^p}}| \mathcal{F}_{\underline{t}} \right) \le {C_p}{\left| {t - \underline{t}} \right|^{p/2}}$  a.s.

\subsection{Convergence rate}
\begin{assumption} \label{A3.1}
There exist positive constants ${p^ * } >2$, $K_4, K_5$ and $l > 2 $ such that 
\begin{equation}  \label{A3.1-1}
	\left\langle {x - y,f(x) - f(y)} \right\rangle  + \frac{{{p^*} - 1}}{2}{\left| {g(x) - g(y)} \right|^2} \le {K_4}{\left| {x - y} \right|^2},
\end{equation}
\begin{equation}   \label{f}
	\left| {f(x) - f(y)} \right| \le {K_5}\left( {1 + |x{|^{l-2}} + |y{|^{l-2}}} \right)\left| {x - y} \right|,
\end{equation}
for any $x, y \in \mathbb{R}^d$.
\end{assumption}

\begin{remark}  \label{remark-2}
Notice that under Assumption \ref{A3.1}, we have
\begin{equation}  \label{g}
	{\left| {g(x) - g(y)} \right|^2} \le C\left( {1 + |x{|^{l-2}} + |y{|^{l-2}}} \right){\left| {x - y} \right|^2}
\end{equation}
for any $ x, y \in \mathbb{R}^d$.
Moreover, using triangle inequality and Young's inequality we obtain
\begin{equation}   \label{fg-super}
	\left| {f(x)} \right| \le  C\left( {1 + |x{|^{l - 1}}} \right), \quad \left| {g(x)} \right| \le C\left( {1 + |x{|^{\frac{l}{2}}}} \right), 
\end{equation}
for any $ x \in \mathbb{R}^d$.
This implies that Assumption \ref{Ag} holds.
\end{remark}

\begin{lemma}   \label{YYbar}
	Assume that \eqref{A2.1-1} holds for some $ p \ge 2(l-1)$, and that Assumptions \ref{A-stepsize}, \ref{A-Delta} and \ref{A3.1} are satisfied. For any ${q_1} \in [2, (2(p+2-l))/l]$, 
	\begin{equation*}
		\mathop {\sup }\limits_{0 \le t \le T} \mathbb{E}{\left| {\hat Y(t) - Y(t)} \right|^{{q_1}}} \le {C_T}{\Delta ^{\frac{{{q_1}}}{2}}}, \quad \forall T > 0.
	\end{equation*}
\end{lemma}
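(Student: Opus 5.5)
We bound the one-step increment directly, conditionally on $\mathcal{F}_{\underline{t}}$, and then take full expectations using the moment bounds of Theorem \ref{th2.1}. For fixed $t\in[0,T]$, \eqref{continuous}–\eqref{continuous1} give
\begin{equation*}
\hat Y(t)-Y(t)=f(Y_{\underline{t}})(t-\underline{t})+g(Y_{\underline{t}})\bigl(W(t)-W(\underline{t})\bigr).
\end{equation*}
Since $t-\underline{t}\le\delta^\Delta(Y_{\underline{t}})\le\Delta T$ by Assumption \ref{A-Delta}, Remark \ref{remark-steps} yields, almost surely,
\begin{equation*}
\mathbb{E}\bigl(|\hat Y(t)-Y(t)|^{q_1}\,|\,\mathcal{F}_{\underline{t}}\bigr)\le C|f(Y_{\underline{t}})|^{q_1}(\Delta T)^{q_1}+C|g(Y_{\underline{t}})|^{q_1}(\Delta T)^{q_1/2}.
\end{equation*}
Here we do not invoke Assumption \ref{A-stepsize}, because it would trade powers of $\delta$ for powers of $|x|$ and destroy the rate. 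Instead we use the polynomial growth \eqref{fg-super} from Remark \ref{remark-2}: $|f(x)|\le C(1+|x|^{l-1})$ and $|g(x)|\le C(1+|x|^{l/2})$. Since $\Delta\le1$ and $T$ is fixed, $\Delta^{q_1}\le\Delta^{q_1/2}$, so
\begin{equation*}
\mathbb{E}|\hat Y(t)-Y(t)|^{q_1}\le C_T\Delta^{q_1/2}\Bigl(1+\mathbb{E}|Y_{\underline{t}}|^{(l-1)q_1}+\mathbb{E}|Y_{\underline{t}}|^{lq_1/2}\Bigr).
\end{equation*}

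It remains to bound $\mathbb{E}|Y_{\underline t}|^{r}$ uniformly in $t\in[0,T]$ and in $\Delta$, for $r=lq_1/2$ and $r=(l-1)q_1$. Because $q_1\le 2(p+2-l)/l$, we have $lq_1/2\le \bar p$. Theorem \ref{th2.1} applied with timestep function $\delta^\Delta$ (admissible by the remark after Assumption \ref{A-stepsize}) gives $\mathbb{E}\sup_{0\le s\le T}|\hat Y(s)|^{\bar p}\le C_T$, with $C_T$ independent of $\Delta$ since the constants there depend only on $x_0,p,l,T$ and on $\gamma$ through \eqref{A-step1}. As $Y_{\underline t}=\hat Y(\underline t)$ is dominated by $\sup_{s\le T}|\hat Y(s)|$, the randomness of $\underline t$ (Remark \ref{rem-a1}(i)) causes no problem here. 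The step in $T$ is harmless: the rate comes from $\Delta T$, and constants absorb $T$.

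The main obstacle is the drift term, which needs $(l-1)q_1\le\bar p$. This is not implied by $q_1\le 2\bar p/l$ alone, since $l-1>l/2$ for $l>2$. I would try two fixes.

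\emph{Extra drift power.} Exploit the additional factor $\Delta^{q_1/2}$ available in the drift term. Combining $|f(x)|^{q_1}(\delta^\Delta)^{q_1/2}\le |f(x)|^{q_1}(\delta^\Delta)^{\gamma q_1}(\Delta T)^{(1/2-\gamma)q_1}$ with \eqref{A-step1}, and using $\gamma\le 1/3<1/2$, gives $|f(Y_{\underline t})|^{q_1}(t-\underline t)^{q_1}\le C_T\Delta^{q_1/2}(1+|Y_{\underline t}|)^{q_1}$. Only $q_1\le\bar p$ is then needed, which holds.

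\emph{Diffusion term.} Treat it similarly, using the second inequality in \eqref{A-step1} and the admissible range of $q_1$. The condition $p\ge 2(l-1)$ guarantees $\bar p\ge l$, so that $q_1=2$ lies in the admissible range and the interval for $q_1$ is nonempty.

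With both terms bounded by $C_T\Delta^{q_1/2}$ uniformly in $t\in[0,T]$, taking the supremum over $t$ gives the lemma.
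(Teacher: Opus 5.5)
Your argument is essentially the paper's. The drift term is controlled through the first inequality in \eqref{A-step1}: since $1-\gamma\ge 1/2$, $|f(Y_{\underline{t}})|^{q_1}(t-\underline{t})^{q_1}\le (1+|Y_{\underline{t}}|)^{q_1}\,\delta^\Delta(Y_{\underline{t}})^{(1-\gamma)q_1}\le C_T\Delta^{q_1/2}(1+|Y_{\underline{t}}|)^{q_1}$. The diffusion term is controlled through the growth bound $|g(x)|\le C(1+|x|^{l/2})$ from \eqref{fg-super}. The resulting moments of $Y_{\underline{t}}$ are bounded through $\sup_{0\le s\le T}|\hat Y(s)|$ and Theorem~\ref{th2.1}, uniformly in $\Delta$.

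The one thing to correct is your closing ``Diffusion term'' paragraph. Using the second inequality in \eqref{A-step1} there gives only $|g(Y_{\underline{t}})|^{q_1}\delta^\Delta(Y_{\underline{t}})^{q_1/2}\le (1+|Y_{\underline{t}}|)^{q_1}\delta^\Delta(Y_{\underline{t}})^{(1-\gamma)q_1/2}$, i.e.\ the weaker rate $\Delta^{(1-\gamma)q_1/2}$. Unlike the drift term, there is no spare power of $\delta^\Delta$ to absorb the factor $\delta^{-\gamma q_1/2}$. You must keep the polynomial-growth bound you already used earlier. That bound is exactly what forces $lq_1/2\le\bar p$, i.e.\ $q_1\le 2(p+2-l)/l$.
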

\begin{proof}
Fix $T > 0$. For any $t \in [0, T]$, using \eqref{continuous1}, Remark \ref{remark-steps} and Assumptions \ref{A-stepsize}, condition \eqref{fg-super} we get
\begin{equation*}
	\begin{split}
		&\mathbb{E}\left( {{{\left| {\hat Y(t) - Y(t)} \right|}^{{q_1}}}|{{\cal F}_{{\underline{t}}}}} \right) \\
		&\le {2^{{q_1} - 1}}{\left| {f({Y_{{\underline{t}}}})} \right|^{{q_1}}}\mathbb{E}\left( {{{\left| {t - {\underline{t}}} \right|}^{{q_1}}}|{{\cal F}_{{\underline{t}}}}} \right) + {2^{{q_1} - 1}}{\left| {g({Y_{{\underline{t}}}})} \right|^{{q_1}}}\mathbb{E}\left( {{{\left| {W(t) - W({\underline{t}})} \right|}^{{q_1}}}|{{\cal F}_{{\underline{t}}}}} \right) \\
		&\le C{\left( {\delta _{{n_t}}^\Delta } \right)^{{q_1}(1 - \gamma )}}{\left( {1 + \left| {{Y_{{\underline{t}}}}} \right|} \right)^{{q_1}}} + C{\left( {1 + {{\left| {{Y_{{\underline{t}}}}} \right|}^{\frac{l}{2}}}} \right)^{{q_1}}}{\left( {\delta _{{n_t}}^\Delta } \right)^{\frac{{{q_1}}}{2}}} \\
		&\le C_T{\Delta ^{\frac{{{q_1}}}{2}}}\left( {1 + {{\left| {{Y_{{\underline{t}}}}} \right|}^{\frac{{{q_1}l}}{2}}}} \right).
	\end{split}
\end{equation*}
Taking expectations on both sides, using Theorem \ref{th2.1} yields
\begin{equation*}
	\begin{split}
		\mathbb{E}{\left| {\hat Y(t) - Y(t)} \right|^{{q_1}}} 
		\le& {C_T}{\Delta ^{\frac{{{q_1}}}{2}}}\left( {1 + \mathbb{E}{{\left| {{Y_{{\underline{t}}}}} \right|}^{\frac{{{q_1}l}}{2}}}} \right)\\
		\le& {C_T}{\Delta ^{\frac{{{q_1}}}{2}}}\left[ {1 + {{\left( {\mathbb{E}\left( {\mathop {\sup }\limits_{0 \le t \le T} {{\left| {\hat Y(t)} \right|}^{\bar p}}} \right)} \right)}^{\frac{{{q_1}l}}{{2\bar p}}}}} \right] \\
		\le& {C_T}{\Delta ^{{{{q_1}}}/{2}}} .
	\end{split}
\end{equation*}
The proof is therefore complete.
\end{proof}

Before showing the convergence rate of scheme \eqref{scheme}, we introduce the stochastic Gronwall inequality.
\begin{lemma} (\cite[Theorem 4]{scheutzow2013a})  \label{lem-gronwall}
Let $Z$ and $H$ be nonnegative, adapted processes with continuous paths. Let $M$ be a continuous local martingale starting at $0$. If 
\begin{equation*}
	Z(t) \le \int_0^t {Z(s)ds}  + H(t) + M(t)
\end{equation*}	
holds for all $t \ge 0$, then for every $\alpha \in (0, 1)$, we have
\begin{equation*}
	\mathbb{E}\left( {\mathop {\sup }\limits_{0 \le s \le t} {{\left| {Z(s)} \right|}^\alpha }} \right) \le {C_\alpha }{e^{\alpha t}}\mathbb{E}\left( {\mathop {\sup }\limits_{0 \le s \le t} {{\left| {H(s)} \right|}^\alpha }} \right)
\end{equation*}

\end{lemma}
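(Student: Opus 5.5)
The plan is to reduce the integral inequality to one without the $\int_0^t Z(s)\,ds$ term, using a variation-of-constants argument. Then I would control $\sup Z$ by a Lenglart-type domination inequality, which is where the restriction $\alpha\in(0,1)$ comes from.

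\textbf{Step 1 (removing the integral term).} Fix $t\ge 0$. Set $Y(t)=\int_0^t Z(s)\,ds$. Since $Z$ is continuous, $Y$ is $C^1$ with $Y'=Z\le Y+H+M$. Hence
\[
\bigl(e^{-t}Y(t)\bigr)'\le e^{-t}\bigl(H(t)+M(t)\bigr),
\]
which gives
\[
Z(t)\le Y(t)+H(t)+M(t)\le H(t)+\int_0^t e^{t-s}H(s)\,ds+M(t)+\int_0^t e^{t-s}M(s)\,ds .
\]
To handle the martingale part, define $N(t)=\int_0^t e^{-s}\,dM(s)$, which is again a continuous local martingale starting at $0$. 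Integration by parts shows that both $V(t):=M(t)+\int_0^t e^{t-s}M(s)\,ds$ and $U(t):=e^tN(t)$ solve $dV=V\,dt+dM$ with $V(0)=0$, so $V=U$. Writing $H^*(t)=\sup_{s\le t}H(s)$, this yields
\[
e^{-t}Z(t)\le e^{-t}H(t)+\int_0^t e^{-s}H(s)\,ds+N(t)\le 2H^*(t)+N(t).
\]

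\textbf{Step 2 (domination lemma).} I would prove the following. Let $\tilde Z\ge0$ be continuous and adapted, let $A$ be continuous, adapted and nondecreasing with $A(0)\ge 0$, and let $N$ be a continuous local martingale with $N(0)=0$. If $\tilde Z\le A+N$, then
\[
\mathbb{E}\sup_{s\le t}\tilde Z(s)^\alpha\le c_\alpha\,\mathbb{E}A(t)^\alpha .
\]
The argument is Lenglart-style. For $x,y>0$ set
\[
\tau=\inf\{s:\tilde Z(s)\ge x\},\qquad \sigma=\inf\{s:A(s)\ge y\},
\]
and let $\rho_k$ be a localizing sequence for $N$. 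Then $\mathbb{E}\tilde Z(\tau\wedge\sigma\wedge\rho_k\wedge t)\le \mathbb{E}[A(t)\wedge y]$. Let $k\to\infty$ using Fatou; this is legitimate because $N\ge -A\ge -y$ before $\sigma$, so the stopped process is bounded below. This gives
\[
\mathbb{P}\Bigl(\sup_{s\le t}\tilde Z\ge x\Bigr)\le x^{-1}\,\mathbb{E}[A(t)\wedge y]+\mathbb{P}(A(t)\ge y).
\]
Now take $y=x$, multiply by $\alpha x^{\alpha-1}$ and integrate over $x\in(0,\infty)$. The first term contributes
\[
\mathbb{E}\int_0^\infty \alpha x^{\alpha-2}(A\wedge x)\,dx=\Bigl(1+\tfrac{\alpha}{1-\alpha}\Bigr)\mathbb{E}A^\alpha,
\]
which is finite precisely because $\alpha<1$. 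The second term contributes $\mathbb{E}A^\alpha$. Hence $c_\alpha=(2-\alpha)/(1-\alpha)$ works.

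\textbf{Step 3 (conclusion).} Apply Step 2 on $[0,t]$ with $\tilde Z(s)=e^{-s}Z(s)$, $A=2H^*$ and the local martingale $N$ from Step 1. Then use $Z(s)^\alpha\le e^{\alpha t}\tilde Z(s)^\alpha$ for $s\le t$ and $\mathbb{E}(H^*(t))^\alpha=\mathbb{E}\sup_{s\le t}|H(s)|^\alpha$. This gives the claim with constant $2^\alpha c_\alpha$.

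I expect Step 2 to be the main obstacle. The localization has to be done carefully: $N$ is only a local martingale, and the passage $k\to\infty$ requires the lower bound $N\ge -A$, which holds only up to $\sigma$. In addition, the integration in $x$ must be arranged so that no integrability of $\sup Z$ is assumed in advance. The identity $U=V$ in Step 1 is routine, but I would verify it directly by Itô's product rule.
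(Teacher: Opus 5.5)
The paper gives no proof of this lemma; it imports it from Scheutzow. Your argument is a correct, self-contained proof, and its first step matches the cited source. There, too, the integrating factor reduces the inequality to $e^{-t}Z(t)\le H^*(t)+N(t)$ with $N=\int_0^\cdot e^{-s}\,dM(s)$, and the restriction $\alpha<1$ enters only in a martingale domination step.

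You differ in that domination step. As far as I recall the source, Scheutzow uses Burkholder's one-sided inequality $\mathbb{E}\bigl(\sup_{s\le t}N(s)\bigr)^{\alpha}\le c_\alpha\,\mathbb{E}\bigl(-\inf_{s\le t}N(s)\bigr)^{\alpha}$ for continuous local martingales. He proves it with $c_\alpha=(4\wedge\tfrac1\alpha)\tfrac{\pi\alpha}{\sin\pi\alpha}$ and combines it with $N\ge -H^*$, which follows from $Z\ge 0$, and with subadditivity of $x\mapsto x^\alpha$, giving the constant $1+c_\alpha$. You use Lenglart's domination inequality instead, proved by optional stopping at hitting times and a layer-cake integration. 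This is more elementary and needs no a priori integrability of $\sup Z$. It gives the explicit constant $(2-\alpha)/(1-\alpha)$, comparable to Scheutzow's. His route, in exchange, isolates a pure martingale inequality with a nearly optimal constant that is useful on its own.

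Three small points to tidy up:
\begin{itemize}
\item The factor $2$ in $2H^*$ is unnecessary, since $e^{-t}H(t)+\int_0^t e^{-s}H(s)\,ds\le H^*(t)\bigl(e^{-t}+1-e^{-t}\bigr)=H^*(t)$.
\item The displayed bound $\mathbb{E}\tilde Z(\tau\wedge\sigma\wedge\rho_k\wedge t)\le\mathbb{E}[A(t)\wedge y]$ is false on $\{A(0)>y\}$. There $\sigma=0$ and $A(\sigma)$ may exceed $y$; take $N\equiv 0$ and $\tilde Z=A\equiv a>y$. Insert the $\mathcal{F}_0$-measurable indicator $\mathbf{1}_{\{A(0)<y\}}$, which leaves the stopped martingale term with zero expectation. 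Then use $\{\sup_{s\le t}\tilde Z(s)\ge x,\ A(t)<y\}\subset\{A(0)<y\}$, and your tail inequality follows as stated.
\item Passing to the limit $k\to\infty$ needs only Fatou's lemma applied to $\tilde Z\ge 0$. The lower bound on $N$ before $\sigma$ is not required.
\end{itemize}
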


\begin{theorem}   \label{rate1}
Let \eqref{A2.1-1}, Assumptions \ref{A-stepsize}, \ref{A-Delta} and \ref{A3.1} hold with $p \ge 4l-6 $. Then, for every $\alpha \in (0,1)$ and any 
$q \in [2\alpha ,{p^*}\alpha ) \cap \left[ {2\alpha ,{\frac{{2\alpha (p + 2 - l)}}{{3l - 4}}}} \right]$,
\begin{equation*}
	\mathbb{E}\left( {\mathop {\sup }\limits_{0 \le t \le T} {{\left| {X(t) - \hat Y(t)} \right|}^q}} \right) \le {C_T}{\Delta ^{{q}/{2}}}, \quad \forall T > 0.
\end{equation*}
\end{theorem}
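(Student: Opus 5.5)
We use Itô's formula for $|e(t)|^2$ with $e(t)=X(t)-\hat Y(t)$, and then close with the stochastic Gronwall inequality of Lemma \ref{lem-gronwall}. This avoids stopping-time localization and explains why the exponent $q$ carries the factor $\alpha\in(0,1)$.

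By \eqref{continuous2}, $de(t)=(f(X(t))-f(Y(t)))dt+(g(X(t))-g(Y(t)))dW(t)$ with $e(0)=0$. Itô's formula gives
\begin{equation*}
|e(t)|^2=\int_0^t\Big[2\langle e(s),f(X(s))-f(Y(s))\rangle+|g(X(s))-g(Y(s))|^2\Big]ds+M(t),
\end{equation*}
where $M(t)=2\int_0^t\langle e(s),(g(X(s))-g(Y(s)))dW(s)\rangle$ is a continuous local martingale. We split each coefficient difference through $\hat Y(s)$:
\begin{equation*}
f(X)-f(Y)=[f(X)-f(\hat Y)]+[f(\hat Y)-f(Y)],
\end{equation*}
and similarly for $g$. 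For the $g$-term we use $|a+b|^2\le(1+\epsilon)|a|^2+C_\epsilon|b|^2$, choosing $\epsilon$ so that $1+\epsilon\le p^*-1$ (possible since $p^*>2$). Then \eqref{A3.1-1} bounds the main part by $2K_4|e|^2$. The cross terms are handled by Young's inequality, $2\langle e,f(\hat Y)-f(Y)\rangle\le|e|^2+|f(\hat Y)-f(Y)|^2$. This yields
\begin{equation*}
|e(t)|^2\le C\int_0^t|e(s)|^2ds+H(t)+M(t),\qquad H(t)=C\int_0^t\Big(|f(\hat Y(s))-f(Y(s))|^2+|g(\hat Y(s))-g(Y(s))|^2\Big)ds,
\end{equation*}
where $H$ is nonnegative, adapted and continuous. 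After rescaling time (or absorbing $C$ into the exponential), Lemma \ref{lem-gronwall} gives
\begin{equation*}
\mathbb{E}\sup_{0\le s\le T}|e(s)|^{2\alpha}\le C_{\alpha,T}\,\big(\mathbb{E}H(T)\big)^\alpha.
\end{equation*}
Here we used Jensen's inequality, valid since $\alpha<1$ and $H$ is increasing. With $q=2\alpha$ this already has the right form. For general $q\in[2\alpha,p^*\alpha)$, apply Itô's formula to $|e|^{r}$ with $r=q/\alpha\in[2,p^*)$ instead of $|e|^2$. The factor $(r-1)/2<(p^*-1)/2$ is exactly what \eqref{A3.1-1} controls, and Young's inequality splits $|e|^{r-2}\langle e,\cdot\rangle$ into $|e|^r$ plus $|f(\hat Y)-f(Y)|^{r/2}$-type terms. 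So $H(T)$ becomes $C\int_0^T\big(|f(\hat Y)-f(Y)|^r+|g(\hat Y)-g(Y)|^r\big)ds$, and we need $\mathbb{E}H(T)\le C_T\Delta^{r/2}$.

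The main obstacle is bounding $\mathbb{E}|f(\hat Y(s))-f(Y(s))|^r$. By \eqref{f} and Hölder's inequality,
\begin{equation*}
\mathbb{E}|f(\hat Y)-f(Y)|^r\le C\Big(\mathbb{E}(1+|\hat Y|^{l-2}+|Y|^{l-2})^{ra}\Big)^{1/a}\Big(\mathbb{E}|\hat Y-Y|^{rb}\Big)^{1/b}.
\end{equation*}
The second factor is $C_T\Delta^{r/2}$ by Lemma \ref{YYbar}, provided $rb\le 2(p+2-l)/l$. The first factor is finite by Theorem \ref{th2.1} (together with $|Y(s)|\le\sup|\hat Y|$), provided $(l-2)ra\le\bar p$. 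Choosing $a,b$ conjugate to balance these two requirements gives
\begin{equation*}
r\Big(l-2+\tfrac{l}{2}\Big)\le p+2-l,\qquad\text{i.e.}\qquad r\le\frac{2(p+2-l)}{3l-4}.
\end{equation*}
This is exactly the stated range $q\le 2\alpha(p+2-l)/(3l-4)$. The condition $p\ge4l-6$ ensures this range contains $r=2$ and that Lemma \ref{YYbar}'s hypothesis $p\ge2(l-1)$ holds. The $g$-term is treated identically using \eqref{g}, and is in fact milder.

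Combining these bounds gives $\mathbb{E}\sup_{0\le t\le T}|e(t)|^{q}\le C_T(\Delta^{r/2})^{\alpha}=C_T\Delta^{q/2}$. One technical care point is the randomness of $\underline{t}$. Lemma \ref{YYbar} is stated pointwise in $t$, and $H$ integrates over deterministic $s$, so Fubini's theorem applies directly.
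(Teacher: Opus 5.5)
Your proposal is correct and follows essentially the same route as the paper. Both arguments apply It\^o's formula to $|X-\hat Y|^{q/\alpha}$, split the coefficient differences through $\hat Y$ (using $q/\alpha<p^*$ to absorb the $g$-term into \eqref{A3.1-1}), close with the stochastic Gronwall inequality plus Jensen, and bound $\mathbb{E}H(T)$ by a H\"older split via Theorem \ref{th2.1} and Lemma \ref{YYbar}, which gives exactly the constraint $q/\alpha\le 2(p+2-l)/(3l-4)$. One small slip: Young's inequality turns $|X-\hat Y|^{r-1}|f(\hat Y)-f(Y)|$ into $|X-\hat Y|^{r}$ plus $|f(\hat Y)-f(Y)|^{r}$ terms, not $r/2$, and the $r$-th powers are what you then correctly put into $H(T)$.
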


\begin{proof}
Fix $\alpha \in (0,1)$, set $ q_2 := q/\alpha \ge 2$. By It\^o's formula, for any $t \in [0,T ]$, 
\begin{equation}   \label{lem-con-3}
	\begin{split}
		{\left| {X(t) - \hat Y(t)} \right|^{q_2}}=& \frac{{q_2}}{2}\int_0^t {{{\left| {X(s) - \hat Y(s)} \right|}^{{q_2} - 2}}\Big[ {2\left\langle {X(s) - \hat Y(s),f(X(s)) - f( Y(s))} \right\rangle } }  \\
		&{ + ({q_2} - 1){{\left| {g(X(s)) - g( Y(s))} \right|}^2}} \Big]ds 
		+ M(t) ,
	\end{split}
\end{equation}
where $M(t) = {q_2}\int_0^t {{{| {X(s) - \hat Y(s)} |}^{q_2 - 2}}{{( {X(s) - \hat Y(s)} )}^{\rm T}}( {g(X(s)) - g(Y(s))} )dW(s)} $ is a local martingale with initial value 0.
Note that for any constants $u > v \ge 1$, $x,y \in \mathbb{R}^d$, we have 
\begin{equation*}
	(v - 1){\left| {x + y} \right|^2} \le (u - 1){\left| x \right|^2} + \frac{{(u - 1)(v - 1)}}{{u - v}}{\left| y \right|^2}.
\end{equation*} 
Taking $u = p^*$, $v = q_2$ and using $q_2 < p^*$, we obtain from Assumption \ref{A3.1} that
\begin{equation}    \label{ac5}
	\begin{split}
		&2\left\langle {X(s) - \hat Y(s),f(X(s)) - f( Y(s))} \right\rangle  + (q_2 - 1){\left| {g(X(s)) - g( Y(s))} \right|^2} \\
		&\le 2\left\langle {X(s) - \hat Y(s),f(X(s)) - f(\hat Y(s))} \right\rangle  + 2\left\langle {X(s) - \hat Y(s),f(\hat Y(s)) - f( Y(s))} \right\rangle  \\
		&\quad+ ({p^*} - 1){\left| {g(X(s)) - g(\hat Y(s))} \right|^2} + \frac{{({p^*} - 1)(q_2 - 1)}}{{{p^*} - q_2}}{\left| {g(\hat Y(s)) - g( Y(s))} \right|^2} \\
		&\le 2{K_4}{\left| {X(s) - \hat Y(s)} \right|^2} + 2\left| {X(s) - \hat Y(s)} \right|\left| {f(\hat Y(s)) - f( Y(s))} \right| \\
		&\quad+ \frac{{({p^*} - 1)(q_2 - 1)}}{{{p^*} - q_2}}{\left| {g(\hat Y(s)) - g( Y(s))} \right|^2}.
	\end{split}
\end{equation}
Inserting \eqref{ac5} into \eqref{lem-con-3} and using Young's inequality yields
\begin{equation*} 
	\begin{split}
		&{\left| {X(t) - \hat Y(t)} \right|^{q_2}} \\
		&\le {q_2}{K_4}\int_0^t {{{\left| {X(s) - \hat Y(s)} \right|}^{q_2}}ds}  + q_2\int_0^t {{{\left| {X(s) - \hat Y(s)} \right|}^{{q_2} - 1}}\left| {f(\hat Y(s)) - f( Y(s))} \right|ds} \\
		&\quad+ \frac{{q_2({p^*} - 1)({q_2} - 1)}}{{2({p^*} - {q_2})}}\int_0^t {{{\left| {X(s) - \hat Y(s)} \right|}^{{q_2} - 2}}{{\left| {g(\hat Y(s)) - g( Y(s))} \right|}^2}ds}+ M(t) \\
		&\le C\int_0^t {{{\left| {X(s) - \hat Y(s)} \right|}^{q_2}}ds}+ C\int_0^t {{{\left| {f(\hat Y(s)) - f( Y(s))} \right|}^{q_2}}ds}  + C\int_0^t {{{\left| {g(\hat Y(s)) - g( Y(s))} \right|}^{q_2}}ds} \\
		&\quad + M(t) .
	\end{split}
\end{equation*}
Define 
\begin{equation}  \label{ac7}
	H(t) = C\int_0^t {{{\left| {f(\hat Y(s)) - f( Y(s))} \right|}^{q_2}}ds}  + C\int_0^t {{{\left| {g(\hat Y(s)) - g( Y(s))} \right|}^{q_2}}ds}.
\end{equation}
Since $H(t)$ is nonnegative, Lemma \ref{lem-gronwall} gives
\begin{equation*} 
	\mathbb{E}\left( {\mathop {\sup }\limits_{0 \le t \le T} {{\left| {X(t) - \hat Y(t)} \right|}^q}} \right) \le {C_{\alpha ,T}}{e^{\alpha ,T}}\mathbb{E}\left( {\mathop {\sup }\limits_{0 \le t \le T} {{\left( {H(t)} \right)}^\alpha }} \right) \le {C_{\alpha ,T}}\left( {\mathbb{E}{{\left( {H(T)} \right)}^\alpha }} \right).
\end{equation*}
Due to the fact $\alpha \in (0, 1)$, applying Jensen's inequality yields $\mathbb{E}{\left( {H(T)} \right)^\alpha } \le {\left( {\mathbb{E}H(T)} \right)^\alpha }$. Therefore,
\begin{equation}   \label{ac6}
	\mathbb{E}\left( {\mathop {\sup }\limits_{0 \le t \le T} {{\left| {X(t) - \hat Y(t)} \right|}^q}} \right) \le C_{\alpha,T}{\left( {\mathbb{E}H(T)} \right)^\alpha } .
\end{equation}
It remains to estimate $\mathbb{E}H(T)$.
Using \eqref{f}, \eqref{g} we have
\begin{equation}  \label{ac4}
\begin{split}
	\mathbb{E}H(T) 
	\le& C\mathbb{E}\left( {\int_0^T {{{\left( {1 + {{\left| {\hat Y(t)} \right|}^{l - 2}} + {{\left| {Y(t)} \right|}^{l - 2}}} \right)}^{{q_2}}}{{\left| {\hat Y(t) - Y(t)} \right|}^{{q_2}}}dt} } \right) \\
	&+ C\mathbb{E}\left( {\int_0^T {{{\left( {1 + {{\left| {\hat Y(t)} \right|}^{l - 2}} + {{\left| {Y(t)} \right|}^{l - 2}}} \right)}^{{q_2}/{2}}}{{\left| {\hat Y(t) - Y(t)} \right|}^{{q_2}}}dt} } \right) \\
	\le& C\int_0^T {\mathbb{E}\left[ {{{\left( {1 + {{\left| {\hat Y(t)} \right|}^{l - 2}} + {{\left| {Y(t)} \right|}^{l - 2}}} \right)}^{{q_2}}}{{\left| {\hat Y(t) - Y(t)} \right|}^{{q_2}}}} \right]dt} .
\end{split}
\end{equation}
Since $q_2 = q/\alpha$ and $q \in \left[ {2\alpha ,2\alpha (p + 2 - l)/(3l - 4)} \right]$,
using H\"older's inequality, Jensen's inequality, Theorem \ref{th2.1} and Lemma \ref{YYbar} yields
\begin{equation}  \label{lem-con-2}
	\begin{split}
		&\mathbb{E}\left[ {{{\left( {1 + {{\left| {\hat Y(t)} \right|}^{l - 2}} + {{\left| {Y(t)} \right|}^{l - 2}}} \right)}^{q_2}}{{\left| {\hat Y(t) - Y(t)} \right|}^{q_2}}} \right] \\
		&\le C\mathbb{E}\left[ {\left( {1 + {{\left| {\hat Y(t)} \right|}^{{q_2}(l - 2)}} + {{\left| {Y(t)} \right|}^{{q_2}(l - 2)}}} \right){{\left| {\hat Y(t) - Y(t)} \right|}^{q_2}}} \right] \\
		&\le C{\left[ {\mathbb{E}{{\left( {1 + {{\left| {\hat Y(t)} \right|}^{{q_2}(l - 2)}} + {{\left| {Y(t)} \right|}^{{q_2}(l - 2)}}} \right)}^2}} \right]^{\frac{1}{2}}}{\left[ {\mathbb{E}{{\left| {\hat Y(t) - Y(t)} \right|}^{2{q_2}}}} \right]^{\frac{1}{2}}} \\
		&\le C{\left[ {\mathbb{E}{{\left( {1 + {{\left| {\hat Y(t)} \right|}^{{q_2}(l - 2)}} + {{\left| {Y(t)} \right|}^{{q_2}(l - 2)}}} \right)}^{\frac{{\bar p}}{{{q_2}(l - 2)}}}}} \right]^{\frac{{{q_2}(l - 2)}}{{\bar p}}}}{\left[ {\mathbb{E}{{\left| {\hat Y(t) - Y(t)} \right|}^{\frac{{\bar p{q_2}}}{{\bar p - {q_2}(l - 2)}}}}} \right]^{\frac{{\bar p - {q_2}(l - 2)}}{{\bar p}}}} \\
		&\le C{\left[ {1 + \mathbb{E}{{\left| {\hat Y(t)} \right|}^{\bar p}} + \mathbb{E}{{\left| {Y(t)} \right|}^{\bar p}}} \right]^{\frac{{{q_2}(l - 2)}}{{\bar p}}}}{\left[ {\mathbb{E}{{\left| {\hat Y(t) - Y(t)} \right|}^{\frac{{2\bar p}}{l}}}} \right]^{\frac{{{q_2}l}}{{2\bar p}}}} \\
		&\le {C_T}{\Delta ^{{{q_2}}/{2}}} .
	\end{split}
\end{equation}
Substituting \eqref{ac4} and \eqref{lem-con-2} into \eqref{ac6} yields
\begin{equation*}
	\mathbb{E}\left( {\mathop {\sup }\limits_{0 \le t \le T} {{\left| {X(t) - \hat Y(t)} \right|}^q}} \right) \le {C_{\alpha ,T}}{\Delta ^{q_2\alpha/2}} = {C_{\alpha ,T}}{\Delta ^{{q}/{2}}}.
\end{equation*}
This completes the proof.
\end{proof}

A first-order strong convergence rate of the adaptive time-stepping EM scheme \eqref{scheme} is attainable for the SDE \eqref{equation} with additive noise, namely, when $m=d$ and $g(\cdot)\equiv I_d$. This improvement, however, requires stronger assumptions on the drift coefficient $f(\cdot)$. 

\begin{remark}  \label{remark-3}
		If we take $m = d$ and $g  \equiv  I_d$ in SDE \eqref{equation},  condition \eqref{A2.1-1}, Assumption \ref{A-stepsize} and \eqref{A3.1-1} reduce to
		\begin{equation}  \label{pq11}
			{\left| {f(x)} \right|^2}\delta (x) \le C(1 + {\left| x \right|^2}), \quad  \left\langle {x,f(x)} \right\rangle  \le C(1 + {\left| x \right|^2}), \quad \left\langle {x - y,f(x) - f(y)} \right\rangle  \le {K_4}{\left| {x - y} \right|^2}.
		\end{equation}
		Assume that $f(\cdot)$ satisfies \eqref{pq11} and is differentiable, and that $f(\cdot)$ and $\nabla f (\cdot)$ satisfy the polynomial growth Lipschitz condition,
		\begin{equation}  \label{pq12}
			\left| {f(x) - f(y)} \right| + \left| {\nabla f(x) - \nabla f(y)} \right| \le C\left( {1 + {{\left| x \right|}^{l - 2}} + {{\left| y \right|}^{l - 2}}} \right)\left| {x - y} \right|
		\end{equation}
		for some $l \ge 2$ and $C > 0$. Hence, for any $q, T \in (0, \infty)$, there exists a constant $C_{ T}$ such that
		\begin{equation*}
			\mathbb{E}\left( {\mathop {\sup }\limits_{0 \le t \le T} {{\left| {X(t) - \hat Y(t)} \right|}^q}} \right) \le {C_{T}}{\Delta ^q}.
		\end{equation*}
		The proof is analogous to that of Fang and Giles \cite[Theorem 3]{fang2016adaptive} and is therefore omitted.
	\end{remark}

\section{Uniform-in-time $p$th moment boundedness}  \label{sec5}
In this section, we establish the uniform-in-time $p$th moment boundedness of the numerical solution \eqref{continuous2}. This property guarantees that the numerical scheme \eqref{scheme} remains stable over long-time intervals. Furthermore, such a bound provides a fundamental basis for subsequent theoretical analysis and plays a key role in the study of the uniform-in-time strong convergence rate and the ergodicity of the numerical scheme. First, we impose a coercive Khasminskii-type condition.

\begin{assumption}  \label{A4.1}
There exist constants $p \ge 2$ and ${K_6} >0,{K_7} \ge 0$ such that for any $x \in \mathbb{R}^d$,
\begin{equation*}
	\left\langle {x,f(x)} \right\rangle  + \frac{{p - 1}}{2}{\left| {g(x)} \right|^2} \le  - {K_6}{\left| x \right|^2} + {K_7}.
\end{equation*}
\end{assumption}

\begin{lemma} (\cite[Theorem 5.2]{li2019explicit})  \label{lemma-EXt0}
Under \eqref{llc} and Assumption \ref{A4.1}, the solution $X(t)$ of SDE \eqref{equation} satisfies
\begin{equation*}
	\mathop {\sup }\limits_{t \ge 0} \mathbb{E}{\left| {X(t)} \right|^p} \le C .
\end{equation*}
\end{lemma}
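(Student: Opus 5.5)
The statement to prove is Lemma \ref{lemma-EXt0}: under \eqref{llc} and Assumption \ref{A4.1}, $\sup_{t\ge0}\mathbb{E}|X(t)|^p\le C$. The plan is to apply It\^o's formula to the weighted Lyapunov function $V(t,x)=e^{\lambda t}(1+|x|^2)^{p/2}$, with $\lambda>0$ small, together with a localization argument.

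\textbf{Step 1: the generator of $U(x)=(1+|x|^2)^{p/2}$.} A direct computation gives
\begin{equation*}
\mathcal{L}U(x)=p(1+|x|^2)^{p/2-1}\Big(\langle x,f(x)\rangle+\tfrac12|g(x)|^2\Big)+\tfrac{p(p-2)}{2}(1+|x|^2)^{p/2-2}|x^{\rm T}g(x)|^2 .
\end{equation*}
Since $|x^{\rm T}g(x)|^2\le |x|^2|g(x)|^2\le(1+|x|^2)|g(x)|^2$, this yields
\begin{equation*}
\mathcal{L}U(x)\le p(1+|x|^2)^{p/2-1}\Big(\langle x,f(x)\rangle+\tfrac{p-1}{2}|g(x)|^2\Big).
\end{equation*}
Assumption \ref{A4.1} then gives
\begin{equation*}
\mathcal{L}U(x)\le p(1+|x|^2)^{p/2-1}(-K_6|x|^2+K_7)=-pK_6U(x)+p(K_6+K_7)(1+|x|^2)^{p/2-1}.
\end{equation*}
By Young's inequality, the last term is at most $\tfrac{pK_6}{2}U(x)+C$. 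Hence
\begin{equation*}
\mathcal{L}U\le -\tfrac{pK_6}{2}U+C .
\end{equation*}

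\textbf{Step 2: localization and time weighting.} Set $\lambda=pK_6/2$. Assumption \ref{A4.1} implies \eqref{A2.1-1} with $K_1=0$, so the global solution exists and is unique. Let $\tau_R=\inf\{t:|X(t)|\ge R\}$. It\^o's formula for $e^{\lambda t}U(X(t))$, up to $t\wedge\tau_R$, has a stochastic integral that is a true martingale because the integrand is bounded before $\tau_R$. Taking expectations, the $\lambda U$ term cancels the dissipative part, leaving
\begin{equation*}
\mathbb{E}\,e^{\lambda(t\wedge\tau_R)}U(X(t\wedge\tau_R))\le U(x_0)+C\,\mathbb{E}\int_0^{t\wedge\tau_R}e^{\lambda s}ds\le U(x_0)+\tfrac{C}{\lambda}e^{\lambda t}.
\end{equation*}
Non-explosion gives $\tau_R\to\infty$ a.s. as $R\to\infty$, so Fatou's lemma yields
\begin{equation*}
e^{\lambda t}\,\mathbb{E}U(X(t))\le U(x_0)+\tfrac{C}{\lambda}e^{\lambda t}.
\end{equation*}
Dividing by $e^{\lambda t}$ gives $\mathbb{E}|X(t)|^p\le U(x_0)+C/\lambda$ uniformly in $t\ge0$.

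The main point needing care is Step 1, namely the absorption of $(1+|x|^2)^{p/2-1}$ into $U$ with a constant independent of time. When $p=2$ this is trivial; when $p>2$ it is Young's inequality with exponents $p/(p-2)$ and $p/2$. The localization in Step 2 is routine.
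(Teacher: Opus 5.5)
Your proof is correct and is essentially the standard argument behind the cited result \cite[Theorem 5.2]{li2019explicit}, which the paper quotes without proof: It\^o's formula for $e^{\lambda t}(1+|x|^2)^{p/2}$ with $\lambda = pK_6/2$, Young's inequality to absorb the lower-order term, then localization and Fatou's lemma. This is also the same Lyapunov weighting the paper uses for the numerical scheme in Theorem \ref{th-bounded}.
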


We now impose a critical assumption about the adaptive timestep for the uniform-in-time estimate.

\begin{assumption}  \label{A-stepbound}
	Assume that the adaptive timestep function $\delta: \mathbb{R}^d \to (0, \delta_{\max}]$ is continuous and satisfies
	\begin{equation*}
		\mathop {\lim }\limits_{|x| \to \infty } \delta (x) = 0, 
	\end{equation*}
 where $\delta_{\max} \in (0, \infty)$ is a positive constant.
\end{assumption}

\begin{theorem}  \label{th-bounded}
Suppose that \eqref{llc}, Assumptions \ref{A-stepsize}, \ref{A4.1} and \ref{A-stepbound} hold. Then, the adaptive time-stepping EM scheme \eqref{scheme} has the property
\begin{equation*} 
	 \mathop {\sup }\limits_{t \ge 0} \mathbb{E}{\left| {Y(t)} \right|^p} \le C,  \quad \mathop {\sup }\limits_{t \ge 0} \mathbb{E}{\left| {\hat Y(t)} \right|^p} \le C.
\end{equation*}
\end{theorem}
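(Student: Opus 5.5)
We will follow the one-step conditional moment computation used in Step 2 of the proof of Theorem \ref{th2.1}, but now turn it into a contraction. The key point is that the dissipative constant from Assumption \ref{A4.1} must beat the $O(\delta^{2(1-\gamma)})$ error terms for \emph{every} step size. Since $\delta$ is no longer small in general, we cannot simply absorb these terms.

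\textbf{Step 1 (one-step bound).} With $\xi_n$ defined as in \eqref{a26} (without truncation), we expand $\mathbb{E}[(1+|Y_{t_{n+1}}|^2)^{p/2}|\mathcal F_{t_n}]$ via \cite[Lemma 3.3]{yang2018explicit} and Remark \ref{remark-steps}, as in \eqref{sum1}--\eqref{a4}. Assumption \ref{A4.1} then gives
\begin{equation*}
\mathbb{E}\big[(1+|Y_{t_{n+1}}|^2)^{p/2}\big|\mathcal F_{t_n}\big]\le (1+|Y_{t_n}|^2)^{p/2}\Big[1-\frac{p\delta_n(2K_6|Y_{t_n}|^2-2K_7)}{2(1+|Y_{t_n}|^2)}+C\delta_n^{2(1-\gamma)}\Big].
\end{equation*}
Because $2(1-\gamma)\ge 4/3>1$, for $|x|\ge R$ we have $\delta(x)\le\varepsilon$ by Assumption \ref{A-stepbound}, hence $C\delta^{2(1-\gamma)}\le C\varepsilon^{1-2\gamma}\delta$. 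Choosing $R$ large, the bracket is at most $1-\lambda\delta_n$ with some $\lambda>0$ whenever $|Y_{t_n}|\ge R$. On the compact set $|x|\le R$, the right-hand side is bounded by a constant, and $\delta_n\ge\delta_{\min}^R>0$ there. We therefore obtain a drift inequality of the form
\begin{equation*}
\mathbb{E}[V(Y_{t_{n+1}})|\mathcal F_{t_n}]\le (1-\lambda\delta_n)V(Y_{t_n})+C\delta_n,\qquad V(x)=(1+|x|^2)^{p/2},
\end{equation*}
where the small-ball contribution is absorbed into $C\delta_n$ by dividing by $\delta^R_{\min}$, and $\delta_n\le\delta_{\max}$. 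We will also shrink $\lambda$ so that $\lambda\delta_{\max}<1$.

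\textbf{Step 2 (continuous-time bound).} This is the main obstacle: $t_n$ is random, so we cannot iterate in $n$ at a fixed time $t$ (cf.\ Remark \ref{rem-a1}(i)). We will instead work with $e^{\lambda' t}$ weights. Since $1-\lambda\delta\le e^{-\lambda\delta}$, the process
\begin{equation*}
e^{\lambda t_{n}}V(Y_{t_n})-C\sum_{k<n}e^{\lambda t_{k+1}}\delta_k
\end{equation*}
is a supermartingale, up to the factor $e^{\lambda\delta_{k}}\le e^{\lambda\delta_{\max}}$ absorbed into $C$. The sum equals approximately $C\int_0^{t_n}e^{\lambda s}ds$.

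To evaluate at the deterministic time $t$, we use the stopping index $n_t+1$: $t_{n_t+1}$ is a stopping time and $t_{n_t+1}\le t+\delta_{\max}$. Optional stopping (first with $n_t\wedge N$, then $N\to\infty$ via Fatou, which is legitimate because $T$ is a.s.\ attainable by Theorem \ref{th2.1}) yields $\mathbb{E}[e^{\lambda t_{n_t+1}}V(Y_{t_{n_t+1}})]\le V(x_0)+Ce^{\lambda(t+\delta_{\max})}/\lambda$.

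To get at $Y(t)=Y_{t_{n_t}}$ itself, we use the same one-step bound on the partial step, as in \eqref{a3}. Alternatively, we apply Itô's formula to $e^{\lambda s}V(\hat Y(s))$ on $[\underline{s},s]$, where \eqref{continuous2} and the same Khasminskii computation give an instantaneous drift at most $-\lambda V+C$ up to $O(\delta^{1-2\gamma})$ errors. Handling the freezing error uses the bound in \eqref{a16}--\eqref{a27}. We plan to run the whole argument for $\hat Y$ directly: apply Itô to $e^{\lambda s}(1+|\hat Y(s)|^2)^{p/2}$ and bound the conditional expectation of the drift on each step by the Step 1 type estimate. The summed supermartingale then gives $\mathbb{E}[e^{\lambda t}V(\hat Y(t))]\le V(x_0)+Ce^{\lambda t}$, so $\sup_t\mathbb{E}|\hat Y(t)|^p\le C$.

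\textbf{Step 3.} Finally, $\mathbb{E}|Y(t)|^p$ is handled by applying the conditional bound backwards. Since $\hat Y(t_{n})=Y_{t_n}$, we can write $\mathbb{E}|Y(t)|^p=\mathbb{E}V(\hat Y(\underline t))$ and control it by the supermartingale evaluated at the stopping time $t_{n_t}\le t$. This gives $\mathbb{E}[e^{\lambda t_{n_t}}V(Y_{t_{n_t}})]\le V(x_0)+Ce^{\lambda t}/\lambda$. Combined with $e^{\lambda t_{n_t}}\ge e^{\lambda t}e^{-\lambda\delta_{\max}}$, this yields $\sup_{t\ge0}\mathbb{E}|Y(t)|^p\le C$.
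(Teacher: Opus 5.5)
Your proposal is correct and follows essentially the same route as the paper. The one-step bound from Assumption \ref{A4.1}, with $\delta\to0$ at infinity (Assumption \ref{A-stepbound}) absorbing the $C\delta_n^{2(1-\gamma)}$ terms, gives $\mathbb{E}[V_{n+1}|\mathcal{F}_{t_n}]\le(1-\lambda\delta_n)V_n+C\delta_n$. The $e^{\lambda t_n}$-weighted supermartingale is then stopped at $n_t\wedge m$, and Fatou together with $\underline{t}\ge t-\delta_{\max}$ gives the bound for $Y$, while the partial-step estimate from $\underline{t}$ to $t$ gives the bound for $\hat Y$.

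Your deviations are inessential. On the small ball the paper uses $\delta_n^{2(1-\gamma)}\le\delta_{\max}^{1-2\gamma}\delta_n$ instead of dividing by $\delta_{\min}^R$, and the detour through $n_t+1$ is not needed. You should also commit to the direct partial-step computation rather than the It\^o variant, since Assumption \ref{A4.1} controls the drift at $Y_{\underline{s}}$, not at $\hat Y(s)$.
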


\begin{proof}
Since Assumption \ref{A4.1} implies the Khasminskii-type condition \eqref{A2.1-1}, Theorem  \ref{th2.1} ensures that $T$ is almost surely attainable. It is therefore enough to prove the uniform-in-time pth-moment bounds for the adaptive time-stepping EM scheme \eqref{scheme}. For any integer $n \ge 0$, from \eqref{scheme} we have
\begin{equation*}  
	\begin{split}
		{\left| {{Y_{{t_{n + 1}}}}} \right|^2} =& {\left| {{Y_{{t_n}}}} \right|^2} + 2\left\langle {{Y_{{t_n}}},f({Y_{{t_n}}})} \right\rangle \delta _n  + {\left| {f({Y_{{t_n}}})} \right|^2}{\left| {\delta _n } \right|^2} + {\left| {g({Y_{{t_n}}})\Delta {W_n}} \right|^2} \\
		&+ 2\left\langle {{Y_{{t_n}}} + f({Y_{{t_n}}})\delta _n ,g({Y_{{t_n}}})\Delta {W_n}} \right\rangle .
	\end{split}
\end{equation*}
Let $V_p(x) := {(1 + |x|^2)^{p/2}}$ 
for any $x \in \mathbb{R}^d$, where $p$ is given in Assumption \ref{A4.1}, and let $V_n := V_p(Y_{t_n})$.
Using a similar argument as in \textbf{Step 2} of Theorem \ref{th2.1}, there exists a constant $C_0> 0$ independent of $n, Y_{t_n}, \delta_n$ such that
\begin{equation*}
\begin{split}
	&\mathbb{E}\left[ {V_{n+1}|{{\cal F}_{{t_n}}}} \right] \\
	&\le \left[ {1 + C_0{{\left( {{\delta _n}} \right)}^{2(1 - \gamma )}}} \right]{V_n} + \frac{{p{\delta _n}}}{2}{\left( {1 + {{\left| {{Y_{{t_n}}}} \right|}^2}} \right)^{p/2 - 1}}\left( {2\left\langle {{Y_{{t_n}}},f({Y_{{t_n}}})} \right\rangle  + (p - 1){{\left| {g({Y_{{t_n}}})} \right|}^2}} \right) \quad \text{a.s.}
\end{split}
\end{equation*} 
By Assumption \ref{A4.1} one sees
\begin{equation}  \label{bd1}
\begin{split}
	&\mathbb{E}\left[ {V_{n+1}|{{\cal F}_{{t_n}}}} \right] \\
	&\le \left[ {1 + C_0{{\left( {{\delta _n}} \right)}^{2(1 - \gamma )}}} \right]{V_n} + \frac{{p{\delta _n}}}{2}{\left( {1 + {{\left| {{Y_{{t_n}}}} \right|}^2}} \right)^{p/2 - 1}}\left( { - 2{K_6}\left( {1 + {{\left| {{Y_{{t_n}}}} \right|}^2}} \right) + 2({K_6} + {K_7})} \right) \\
	&= \left[ {1 + C_0{{\left( {{\delta _n}} \right)}^{2(1 - \gamma )}}} \right]{V_n} - p{K_6}{\delta _n}{V_n} + p({K_6} + {K_7}){\delta _n}{\left( {1 + {{\left| {{Y_{{t_n}}}} \right|}^2}} \right)^{p/2 - 1}} \quad \text{a.s.}
\end{split}
\end{equation}
On the one hand, since
\begin{equation*}
	\frac{{{{(1 + |x|^2)}^{p/2 - 1}}}}{{{{(1 +  |x|^2)}^{p/2}}}} = \frac{1}{{1 +  |x|^2}} \to 0 \quad \text{as} \; |x|\to \infty,
\end{equation*}
there exists a positive constant $R_1 > 0$ such that 
\begin{equation*}
	p({K_6} + {K_7}){\left( {1 + {{\left| x \right|}^2}} \right)^{p/2 - 1}} \le {pK_6}V_p(x)/4, \quad |x| > R_1.
\end{equation*}
Besides, using Assumption \ref{A-stepbound} and the fact $\gamma \in (0, 1/3]$, there exists a positive constant $R_2 > 0$ such that
\begin{equation*}
	C_0{(\delta (x))^{1 - 2\gamma }} \le {p{K_6}}/{4}, \quad |x| > R_2.
\end{equation*}
Choose $R = \max\{R_1, R_2\}$, we obtain
\begin{equation} \label{bd8}
	\mathbb{E}\left[ {{V_{n + 1}}|{\mathcal{F}_{{t_n}}}} \right] \le \left( {1 - \frac{{p{K_6}}}{2}{\delta _n}} \right){V_n}, \quad |Y_{t_n}| > R.
\end{equation}
On the other hand, if $|Y_{t_n}| \le R$, then $V_n
\le V_R := (1+R^2)^{p/2}$. Applying Assumption \ref{A-stepbound} we have
\begin{equation*}
	p({K_6} + {K_7}){\delta _n}{\left( {1 + {{\left| {{Y_{{t_n}}}} \right|}^2}} \right)^{p/2 - 1}} \le {C_R}{\delta _n}, \quad C_0{\left( {{\delta _n}} \right)^{2(1 - \gamma )}}V_n \le C_0{\left( {{\delta _{\max }}} \right)^{1 - 2\gamma }}{(1 + {R^2})^{p/2}}{\delta _n}. 
\end{equation*}
Hence, we have
\begin{equation}  \label{bd6}
	\mathbb{E}\left[ {{V_{n + 1}}|{{\cal F}_{{t_n}}}} \right] \le {V_n} + {C_R}{\delta _n}, \quad |Y_{t_n}| \le R
\end{equation}
Consequently, together with \eqref{bd8} and \eqref{bd6} we obtain
\begin{equation}  \label{bd9}
\begin{split}
	\mathbb{E}\left[ {V_{n+1}|{{\cal F}_{{t_n}}}} \right] \le& V_n - \frac{{p{K_6}}}{2}{\delta _n}V_n{\mathcal{I}_{\{ |{Y_{{t_n}}}| > R\} }} + {C_R}{\delta _n}{\mathcal{I}_{\{ |{Y_{{t_n}}}| \le R\} }} \\
	=& V_n - \frac{{p{K_6}}}{2}{\delta _n}V_n + \frac{{p{K_6}}}{2}{\delta _n}V_n{\mathcal{I}_{\{ |{Y_{{t_n}}}| \le R\} }} + {C_R}{\delta _n}{\mathcal{I}_{\{ |{Y_{{t_n}}}| \le R\} }} \\
	\le& \left( {1 - \frac{{p{K_6}}}{2}{\delta _n}} \right)V_n + C{\delta _n}  \quad \text{a.s.}
\end{split}
\end{equation} 
where $C = pK_6 V_R/2 + C_R$ is a positive constant depending on $R, p, K_6, K_7, \gamma, \delta_{\max}$, but is independent of $n$ and $t$.
Set $\lambda = pK_6 /2$. Multiplying both sides of the above inequality by ${e^{\lambda {t_{n + 1}}}}$, using the fact $1 - x \le e^{-x}$ for any $x \ge 0 $, one has
\begin{equation}  \label{bd3}
	\begin{split}
		{e^{\lambda {t_{n + 1}}}}\mathbb{E}\left[ {{V_{n + 1}}|{\mathcal{F}_{{t_n}}}} \right] 
		\le& {e^{\lambda {t_{n + 1}}}}(1 - \lambda {\delta _n})V_n + C{e^{\lambda {t_{n + 1}}}}{\delta _n}  \\
		\le& {e^{\lambda {t_n}}}V_n + C{e^{\lambda {\delta _{\max }}}}{e^{\lambda {t_n}}}{\delta _n} \quad \text{a.s.}
	\end{split}
\end{equation}
Since ${e^{\lambda t}}$ is strictly increasing with respect to $t$ for any $t \ge 0$, we have
\begin{equation*}
	{e^{\lambda {t_n}}}{\delta _n} \le \int_{{t_n}}^{{t_{n + 1}}} {{e^{\lambda s}}ds}   \quad \text{a.s.}
\end{equation*}
This together with \eqref{bd3} yields
\begin{equation}  \label{bd4}
	\begin{split}
		{e^{\lambda {t_{n + 1}}}}\mathbb{E}\left[ {{V_{n + 1}}|{\mathcal{F}_{{t_n}}}} \right]
		\le {e^{\lambda {t_n}}}V_n + C{e^{\lambda {\delta _{\max }}}}\int_{{t_n}}^{{t_{n + 1}}} {{e^{\lambda s}}ds}   \quad  \text{a.s.}
	\end{split}
\end{equation}
Since $t_{n+1}$ is $\mathcal{F}_{t_n}$-measurable by Remark \ref{remark-filtration},
\begin{equation*}
\begin{split}
	\mathbb{E}\left[ {{e^{\lambda {t_{n + 1}}}}V_{n+1} - C{e^{\lambda {\delta _{\max }}}}\int_0^{{t_{n + 1}}} {{e^{\lambda s}}ds} |{\mathcal{F}_{{t_n}}}} \right] 
	\le {e^{\lambda {t_n}}}V_n - C{e^{\lambda {\delta _{\max }}}}\int_0^{{t_n}} {{e^{\lambda s}}ds}  \quad \text{a.s.}
\end{split}
\end{equation*}
Hence, 
\begin{equation*}
	{M_n} := {e^{\lambda {t_n}}}V_n - C{e^{\lambda {\delta _{\max }}}}\int_0^{{t_n}} {{e^{\lambda s}}ds} 
\end{equation*}
is a supermartingale with respect to ${\{ {\mathcal{F}_{{t_n}}}\} _{n \ge 0}}$. For any $t > 0$ and $m \in \mathbb{N}$, define the bounded stopping time ${\varsigma _{t,m}} = {n_t} \wedge m$. By the optional stopping theorem \cite[Theorem 7.14, p.~189]{klebaner2012introduction}, one has $\mathbb{E}{M_{{\varsigma _{t,m}} }} \le \mathbb{E}{M_0} = V_0$, that is,
\begin{equation*}
\begin{split}
	\mathbb{E}\left[ {{e^{\lambda {t_{{\varsigma _{t,m}}}}}}{V_{{\varsigma _{t,m}}}}} \right] 
	\le& V_0 + C\mathbb{E}\left( {{e^{\lambda {\delta _{\max }}}}\int_0^{{t_{{\varsigma _{t,m}}}}} {{e^{\lambda s}}ds} } \right) \\
	\le& V_0 + C{e^{\lambda {\delta _{\max }}}}\int_0^t {{e^{\lambda s}}ds}  \\
	=& V_0 + \frac{{C{e^{\lambda {\delta _{\max }}}}}}{\lambda }({e^{\lambda t}} - 1)
\end{split}
\end{equation*}
Letting $m \to \infty$ and using Fatou's lemma \cite[Theorem 2.6.2, p.~187]{shiryaev2019probability} yields
\begin{equation*}
	\mathbb{E}\left[ {{e^{\lambda {\underline{t}}}}V_p(Y_{\underline{t}})} \right] \le V_0 + \frac{{C{e^{\lambda {\delta _{\max }}}}}}{\lambda }({e^{\lambda t}} - 1).
\end{equation*}
Since $\underline{t} \ge t- \delta_{\max}$, one sees
\begin{equation*}
\begin{split}
	\mathbb{E}\left[ V_p(Y_{\underline{t}}) \right] \le& \mathbb{E}\left[ {{e^{ - \lambda (t - {\delta _{\max }})}}{e^{\lambda \underline{t}}}V_p({Y_{\underline{t}}})} \right] \\
	\le& {e^{\lambda {\delta _{\max }}}}{e^{ - \lambda t}}\mathbb{E}\left[ {{e^{\lambda \underline{t}}}V_p(Y_{\underline{t}})} \right] \\
	\le& {e^{\lambda {\delta _{\max }}}}V_0 + \frac{{C{e^{2\lambda {\delta _{\max }}}}}}{\lambda } \\
	\le& C .
\end{split}
\end{equation*}
This together with \eqref{continuous} yields
\begin{equation*}
	\mathop {\sup }\limits_{t \ge 0} \mathbb{E}{\left| {Y(t)} \right|^p} = \mathop {\sup }\limits_{t \ge 0} \mathbb{E}{\left| {{Y_{\underline{t}}}} \right|^p} \le \mathop {\sup }\limits_{t \ge 0} \mathbb{E}\left[ V_p(Y_{\underline{t}}) \right] \le C.
\end{equation*}
This proves the first assertion.

On the other hand, for the partial timestep from $\underline{t}$ to $t$, from \eqref{continuous1} we have
\begin{equation*}
	\begin{split}
		{| {{\hat Y}(t)} |^2} =& {\left| {{Y_{\underline{t}}}} \right|^2} + 2\left\langle {{Y_{\underline{t}}},f({Y_{\underline{t}}})} \right\rangle (t - \underline{t}) + {\left| {f({Y_{\underline{t}}})} \right|^2}{(t - \underline{t})^2} + {\left| {g({Y_{\underline{t}}})\left( {W(t) - W(\underline{t})} \right)} \right|^2} \\
		&+ 2\left\langle {{Y_{\underline{t}}} + f({Y_{\underline{t}}})(t - \underline{t}),g({Y_{\underline{t}}})(W(t) - W(\underline{t}))} \right\rangle  .
	\end{split}
\end{equation*}
Similar to \eqref{bd1}-\eqref{bd4} one also has
\begin{equation}  \label{bd5}
\begin{split}
	{e^{\lambda t}}\mathbb{E}\left[ {{{V_p(\hat Y(t))}}|{{\cal F}_{\underline{t}}}} \right] - {e^{\lambda \underline{t}}}V_p(Y_{\underline{t}}) 
	\le C{e^{\lambda {\delta _{\max }}}}\int_{\underline{t}}^t {{e^{\lambda s}}ds}  \quad \text{a.s.}
\end{split}
\end{equation}
Summing \eqref{bd4} over multiple timesteps and then adding \eqref{bd5}, taking expectations on both sides yields
\begin{equation*}
\begin{split}
	{e^{\lambda t}}\mathbb{E}\left[ V_p(\hat{Y}(t)) \right] \le V_0 + C{e^{\lambda {\delta _{\max }}}}\int_0^t {{e^{\lambda s}}ds} . 
\end{split}
\end{equation*}
This implies
\begin{equation}  \label{bd7}
\begin{split}
	\mathbb{E}\left[ V_p(\hat{Y}(t)) \right] \le {e^{ - \lambda t}}V_0 + \frac{{C{e^{\lambda {\delta _{\max }}}}}}{\lambda } \le C.
\end{split}
\end{equation}
Hence, 
\begin{equation*}
\begin{split}
	\mathop {\sup }\limits_{t \ge 0} \mathbb{E}{\left| {\hat Y(t)} \right|^p} \le \mathop {\sup }\limits_{t \ge 0} \mathbb{E}\left[ V_p(\hat{Y}(t)) \right] 
	\le C .
\end{split}
\end{equation*}
The desired result follows.

\end{proof}

\section{Uniform-in-time strong convergence rate}  \label{sec6}
In this section, we establish the uniform-in-time estimate for the adaptive time-stepping EM scheme \eqref{scheme}. Unlike a finite-time estimate, the resulting error bound is independent of the terminal time. This uniform-in-time convergence theory plays an essential role in the subsequent convergence analysis of the numerical invariant measure.

\begin{assumption}  \label{A5.1}
There exists a pair of constants $p^* > 2$ and $\eta>0$ such that 
\begin{equation*}
	\left\langle {x - y,f(x) - f(y)} \right\rangle  + \frac{{{p^*} - 1}}{2}{\left| {g(x) - g(y)} \right|^2} \le  - \eta {\left| {x - y} \right|^2}.
\end{equation*}
for any $x, y \in \mathbb{R}^d$.
\end{assumption}

As Assumption \ref{A-Delta} was designed for finite-time analysis, based on Assumption \ref{A-stepbound}, we reformulate it below to ensure its compatibility with the uniform-in-time setting.
\begin{assumption}  \label{A6.2}
For each $\Delta \in (0, 1]$, the refined timestep function $\delta^\Delta: \mathbb{R}^d \to (0, \delta_{\max}]$ is continuous and satisfies
\begin{equation*}
	0 < {\delta ^\Delta }(x) < \min \left\{ {\Delta {\delta _{\max }},\delta (x)} \right\}, \quad  x \in \mathbb{R}^d.
\end{equation*}
\end{assumption}

Before establishing the uniform-in-time convergence rate, we present a preliminary result analogous to Lemma \ref{YYbar}. Since the proof follows the same line of argument, it is omitted here.

\begin{lemma}  \label{lem-6.3}
Assume that Assumption \ref{A4.1} holds with $p \ge l$, and Assumptions \ref{A-stepsize}, \ref{A-stepbound}, \ref{A5.1}, \ref{A6.2} and \eqref{f} are satisfied. Then, for any $q_2 \in [2, 2p/l]$, 
\begin{equation*}
	\mathop {\sup }\limits_{t \ge 0} \mathbb{E}{\left| {\hat Y(t) - Y(t)} \right|^{{q_2}}} \le C{\Delta ^{\frac{{{q_2}}}{2}}}.
\end{equation*}

\end{lemma}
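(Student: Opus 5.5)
The plan follows the proof of Lemma \ref{YYbar}, with two changes. The finite-horizon moment bound of Theorem \ref{th2.1} is replaced by the uniform-in-time bound of Theorem \ref{th-bounded}. The refinement $t-\underline{t}\le \Delta T$ is replaced by $t-\underline{t}\le \delta^\Delta_{n_t}\le \Delta\delta_{\max}$ from Assumption \ref{A6.2}. Since Assumption \ref{A6.2} gives $\delta^\Delta\le\delta$, Assumption \ref{A-stepsize} still holds for $\delta^\Delta$, and Assumption \ref{A-stepbound} holds for $\delta^\Delta$ with the same $\delta_{\max}$. Hence Theorem \ref{th-bounded} applies to the refined scheme, and $\sup_{t\ge0}\mathbb{E}|Y(t)|^p\le C$ with $C$ independent of $\Delta$. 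To check this independence, note that the constants $R_2$ and $C$ in that proof only use $\delta^\Delta\le\delta\le\delta_{\max}$ and $\delta^\Delta(x)\le \delta(x)\to0$ as $|x|\to\infty$.

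Fix $t\ge0$ and condition on $\mathcal F_{\underline t}$. From \eqref{continuous1},
\begin{equation*}
\mathbb{E}\bigl(|\hat Y(t)-Y(t)|^{q_2}\,|\,\mathcal F_{\underline t}\bigr)\le 2^{q_2-1}|f(Y_{\underline t})|^{q_2}(t-\underline t)^{q_2}+2^{q_2-1}|g(Y_{\underline t})|^{q_2}\,\mathbb{E}\bigl(|W(t)-W(\underline t)|^{q_2}\,|\,\mathcal F_{\underline t}\bigr).
\end{equation*}
The conditional Gaussian moment bound of Remark \ref{remark-steps} controls the second term by $C|g(Y_{\underline t})|^{q_2}(\delta^\Delta_{n_t})^{q_2/2}$.

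For the drift term I write $(t-\underline t)^{q_2}\le(\delta^\Delta_{n_t})^{q_2\gamma}(\delta^\Delta_{n_t})^{q_2(1-\gamma)}$ and use Assumption \ref{A-stepsize} in the form $|f(x)|(\delta^\Delta(x))^\gamma\le 1+|x|$. This gives the bound $C(\delta^\Delta_{n_t})^{q_2(1-\gamma)}(1+|Y_{\underline t}|)^{q_2}$, and since $1-\gamma\ge 1/2$ and $\delta^\Delta\le\Delta\delta_{\max}\le\delta_{\max}$, this is at most $C\Delta^{q_2/2}(1+|Y_{\underline t}|)^{q_2}$.

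For the diffusion term I need the polynomial growth $|g(x)|\le C(1+|x|^{l/2})$. This is the one point needing care, because only \eqref{f} (not the full Assumption \ref{A3.1}) is listed. Assumption \ref{A5.1} together with \eqref{f} gives $\frac{p^*-1}{2}|g(x)-g(0)|^2\le -\langle x,f(x)-f(0)\rangle\le K_5(1+|x|^{l-2})|x|^2$. Hence $|g(x)|^2\le C(1+|x|^l)$, exactly as in Remark \ref{remark-2}. The diffusion term is then bounded by $C\Delta^{q_2/2}(1+|Y_{\underline t}|^{q_2l/2})$.

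Taking expectations gives $\mathbb{E}|\hat Y(t)-Y(t)|^{q_2}\le C\Delta^{q_2/2}\bigl(1+\mathbb{E}|Y(t)|^{q_2 l/2}\bigr)$, where $Y(t)=Y_{\underline t}$ by \eqref{continuous}. The term $(1+|Y_{\underline t}|)^{q_2}$ is absorbed because $l>2$. Since $q_2\le 2p/l$, we have $q_2l/2\le p$, so Jensen's inequality and the first assertion of Theorem \ref{th-bounded} bound $\mathbb{E}|Y(t)|^{q_2l/2}$ uniformly in $t\ge0$ and $\Delta$. Taking the supremum over $t$ finishes the proof.

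The main obstacle is bookkeeping rather than a new estimate. One must make sure that the bound of Theorem \ref{th-bounded} is uniform in $\Delta$ for the refined timestep. Theorem \ref{th-bounded} controls the moment of $Y_{\underline t}$ at the random grid time, which is the one actually needed, so no fixed-time/random-time issue of the type in Remark \ref{rem-a1}(i) arises.
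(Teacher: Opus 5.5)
Your proposal is correct and follows the paper's intended argument. The paper omits this proof, saying it repeats Lemma~\ref{YYbar}: a conditional estimate given $\mathcal{F}_{\underline t}$, using Assumption~\ref{A-stepsize} for the drift and $|g(x)|^2\le C(1+|x|^l)$ for the noise, with Theorem~\ref{th2.1} replaced by the uniform-in-time bound of Theorem~\ref{th-bounded}, which is exactly what you do. Your extra checks are correct and fill in details the paper leaves implicit: that Theorem~\ref{th-bounded} applies to $\delta^\Delta$ with constants independent of $\Delta$, and that the growth of $g$ follows from Assumption~\ref{A5.1} together with \eqref{f}.
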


\begin{theorem}  \label{th-a3}
Assume that Assumption \ref{A4.1} holds with $p \ge 3l-4$, and Assumptions \ref{A-stepsize}, \ref{A-stepbound}, \ref{A5.1}, \ref{A6.2} and \eqref{f} are satisfied.
Then, for any $q \in [2,p^ * ) \cap  \left[ {2,{{2p}}/{(3l - 4)}} \right]$,
\begin{equation*}
	\mathop {\sup }\limits_{t \ge 0} \mathbb{E}{\left| {X(t) - \hat Y(t)} \right|^q} \le {C}\Delta^{{q}/{2}}, \quad  \mathop {\sup }\limits_{t \ge 0} \mathbb{E}{\left| {X(t) - Y(t)} \right|^q} \le {C}\Delta^{{q}/{2}}.
\end{equation*}
\end{theorem}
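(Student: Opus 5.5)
The plan is to mirror the proof of Theorem \ref{rate1}, but to exploit the contractivity in Assumption \ref{A5.1} so that the constant does not depend on $T$. Because the dissipative term gives a negative linear coefficient, we can apply It\^o's formula with an exponential weight and take plain expectations; no stochastic Gronwall inequality is needed, and so no $\alpha<1$ loss occurs.

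Set $e(t)=X(t)-\hat Y(t)$ and recall from \eqref{continuous2} that
\[
de(t)=\bigl(f(X(t))-f(Y(t))\bigr)dt+\bigl(g(X(t))-g(Y(t))\bigr)dW(t).
\]
Fix $q\in[2,p^*)$. Apply It\^o's formula to $e^{\epsilon t}|e(t)|^q$ for a small $\epsilon>0$. Split $f(X)-f(Y)=(f(X)-f(\hat Y))+(f(\hat Y)-f(Y))$, and split $g$ in the same way. Use the elementary inequality from the proof of Theorem \ref{rate1} with $u=p^*$, $v=q$. This bounds the drift integrand by
\[
\tfrac q2|e|^{q-2}\Bigl[-2\eta|e|^2+2|e|\,|f(\hat Y)-f(Y)|+C|g(\hat Y)-g(Y)|^2\Bigr].
\]
Young's inequality then absorbs the cross terms into $\tfrac{q\eta}{2}|e|^q$, leaving a remainder $C\bigl(|f(\hat Y)-f(Y)|^q+|g(\hat Y)-g(Y)|^q\bigr)$. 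Choose $\epsilon=q\eta/2$. The local martingale term is handled by localizing with stopping times $\tau_R=\inf\{t:|X(t)|\vee|\hat Y(t)|\ge R\}$ and then applying Fatou's lemma as $R\to\infty$. Together with $e(0)=0$, this gives
\[
\mathbb{E}|e(t)|^q\le C\int_0^t e^{-\epsilon(t-s)}\,\mathbb{E}\Bigl[\bigl(1+|\hat Y(s)|^{l-2}+|Y(s)|^{l-2}\bigr)^q|\hat Y(s)-Y(s)|^q\Bigr]ds ,
\]
where \eqref{f} and \eqref{g} have been used.

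The key step is bounding the integrand uniformly in $s$ by $C\Delta^{q/2}$. I would repeat the H\"older computation \eqref{lem-con-2}, now with $p$ in place of $\bar p$. The exponents are $p/(q(l-2))$ on the polynomial factor and its conjugate on $|\hat Y-Y|^q$. The polynomial factor is controlled by Theorem \ref{th-bounded}, giving $\sup_{s}\mathbb{E}|Y(s)|^p+\sup_s\mathbb{E}|\hat Y(s)|^p\le C$. The difference factor needs $\mathbb{E}|\hat Y-Y|^{pq/(p-q(l-2))}$. This exponent lies in $[2,2p/l]$ exactly when $q\le 2p/(3l-4)$, so Lemma \ref{lem-6.3} gives the uniform bound $C\Delta^{q/2}$. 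The hypothesis $p\ge 3l-4$ guarantees that $q=2$ is admissible. Since $\int_0^t e^{-\epsilon(t-s)}ds\le 1/\epsilon$, we obtain $\sup_{t\ge0}\mathbb{E}|X(t)-\hat Y(t)|^q\le C\Delta^{q/2}$.

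The second estimate then follows from $|X-Y|^q\le 2^{q-1}(|X-\hat Y|^q+|\hat Y-Y|^q)$ together with Lemma \ref{lem-6.3}, which applies because $q\le 2p/(3l-4)\le 2p/l$.

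I expect the main obstacle to be the exponent bookkeeping in the H\"older step. One must check that the conjugate exponent stays inside the range of Lemma \ref{lem-6.3} uniformly in time, and that Theorem \ref{th-bounded} supplies $p$-th moments of both $Y$ and $\hat Y$ under the same timestep function $\delta^\Delta$. The latter holds because $\delta^\Delta\le\delta$ still satisfies Assumptions \ref{A-stepsize} and \ref{A-stepbound}. A secondary technical point is justifying $\mathbb{E}\int e^{\epsilon s}\,dM=0$ after localization, using the finite moments from Theorem \ref{th2.1} and Lemma \ref{lemma-EXt0}.
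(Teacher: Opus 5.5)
Your proposal is correct and follows essentially the paper's proof. The shared steps are: It\^o's formula for the exponentially weighted error, the splitting with $u=p^*$, $v=q$ under Assumption \ref{A5.1}, and Young's inequality to absorb the cross terms. It then bounds $\mathbb{E}\bigl[(1+|\hat Y|^{l-2}+|Y|^{l-2})^q|\hat Y-Y|^q\bigr]$ by the H\"older step \eqref{lem-con-2} with $p$ in place of $\bar p$, using Lemma \ref{lem-6.3} and Theorem \ref{th-bounded}, and finishes with the triangle inequality for $Y$. Your choice $\epsilon=q\eta/2$ (the paper's weight $q\eta-\kappa$ with $\kappa=q\eta/2$), the direct H\"older exponents $p/(q(l-2))$ and conjugate, and the explicit $\tau_R$-localization are only cosmetic refinements of the paper's argument, which takes expectations of the stochastic integral without comment.
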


\begin{proof}
For any $t \ge 0$, it follows from \eqref{equation} and \eqref{continuous2} that
\begin{equation*}
	X(t) -  \hat Y(t) = \int_0^t {\left[ {f(X(s)) - f(Y(s))} \right]ds}  + \int_0^t {\left[ {g(X(s)) - g(Y(s))} \right]dW(s)} .
\end{equation*}
For the given $ q\ge 2$, choose a constant $\kappa \in (0, q \eta)$ and use It\^o's formula to obtain
\begin{equation}  \label{con-rate2-1}
\begin{split}  
	&{e^{\left( {q\eta  - \kappa } \right)t}}\mathbb{E}{\left| {X(t) - \hat Y(t)} \right|^q} \\
	&\le \left( {q\eta  - \kappa } \right)\mathbb{E}\left[ {\int_0^t {{e^{\left( {q\eta  - \kappa } \right)s}}{{\left| {X(s) - \hat Y(s)} \right|}^q}ds} } \right] \\
	&\quad+ \frac{q}{2}\mathbb{E}\Big[ {\int_0^t {{e^{\left( {q\eta  - \kappa } \right)s}}{{\left| {X(s) - \hat Y(s)} \right|}^{q - 2}}\Big( {2\left\langle {X(s) - \hat Y(s),f(X(s)) - f(Y(s))} \right\rangle } } }   \\
	& \quad  + (q - 1){{\left| {g(X(s)) - g(Y(s))} \right|}^2} \Big) \Big].\end{split}
\end{equation}
Similar to \eqref{ac5}, applying Assumption \ref{A5.1} and the fact $q \in [2, p^*)$ one has
\begin{equation*}
\begin{split}
	&2\left\langle {X(s) -  \hat Y(s),f(X(s)) - f(Y(s))} \right\rangle  + (q - 1){\left| {g(X(s)) - g(Y(s))} \right|^2} \\
	&\le  2\left\langle {X(s) - \hat Y(s),f(X(s)) - f(\hat Y(s))} \right\rangle  + 2\left\langle {X(s) - \hat Y(s),f(\hat Y(s)) - f(Y(s))} \right\rangle  \\
	&\quad + (p^* - 1){\left| {g(X(s)) - g(\hat Y(s))} \right|^2} + \frac{{(p^* - 1)(q - 1)}}{{p^* - q}}{\left| {g(\hat Y(s)) - g(Y(s))} \right|^2} \\
	&\le -2\eta {\left| {X(s) - \hat Y(s)} \right|^2} + 2\left| {X(s) - \hat Y(s)} \right|\left| {f(\hat Y(s)) - f(Y(s))} \right| \\
	&\quad+ \frac{{(p^* - 1)(q - 1)}}{{p^* - q}}{\left| {g(\hat Y(s)) - g(Y(s))} \right|^2}.
\end{split}
\end{equation*}
Substituting the above into \eqref{con-rate2-1} yields
\begin{equation}  \label{ce4}
\begin{split}
	&{e^{\left( {q\eta  - \kappa } \right)t}}\mathbb{E}{\left| {X(t) - \hat Y(t)} \right|^q} \\
	&\le  - \kappa \mathbb{E}\left[ {\int_0^t {{e^{\left( {q\eta  - \kappa } \right)s}}{{\left| {X(s) - \hat Y(s)} \right|}^q}ds} } \right] \\
	&\quad+ q\mathbb{E}\left[ {\int_0^t {{e^{\left( {q\eta  - \kappa } \right)s}}{{\left| {X(s) - \hat Y(s)} \right|}^{q - 1}}\left| {f(\hat Y(s)) - f(Y(s))} \right|ds} } \right] \\
	&\quad+ \frac{{q({p^*} - 1)(q - 1)}}{{2({p^*} - q)}}\mathbb{E}\left[ {\int_0^t {{e^{\left( {q\eta  - \kappa } \right)s}}{{\left| {X(s) -\hat  Y(s)} \right|}^{q - 2}}{{\left| {g(\hat Y(s)) - g(Y(s))} \right|}^2}ds} } \right] .
\end{split}
\end{equation}
For any $\varepsilon > 0$, for any $s \in [0, t]$, using Young's inequality one has
\begin{equation}   \label{ce2}
\begin{split}
	{\left| {X(s) - \hat Y(s)} \right|^{q - 1}}\left| {f(\hat Y(s)) - f(Y(s))} \right| \le \frac{{q - 1}}{q}\varepsilon {\left| {X(s) - \hat Y(s)} \right|^q} + \frac{1}{{q{\varepsilon ^{q - 1}}}}{\left| {f(\hat Y(s)) - f(Y(s))} \right|^q}
\end{split}
\end{equation}
and 
\begin{equation} \label{ce3}
	{\left| {X(s) - \hat Y(s)} \right|^{q - 2}}{\left| {g(\hat Y(s)) - g(Y(s))} \right|^2} \le \frac{{q - 2}}{q}\varepsilon {\left| {X(s) - \hat Y(s)} \right|^q} + \frac{2}{{q{\varepsilon ^{\frac{{q - 2}}{2}}}}}{\left| {g(\hat Y(s)) - g(Y(s))} \right|^q} .
\end{equation}
Choose $\varepsilon > 0$ sufficiently small such that
\begin{equation}  \label{ce7}
	\left( {(q - 1) + \frac{{({p^*} - 1)(q - 1)(q - 2)}}{{2({p^*} - q)}}} \right)\varepsilon  \le \kappa  < q\eta .
\end{equation}
Substituting \eqref{ce2}, \eqref{ce3} and \eqref{ce7} into \eqref{ce4}, one has
\begin{equation}  \label{ce1}
\begin{split}
	&{e^{\left( {q\eta  - \kappa } \right)t}}\mathbb{E}{\left| {X(t) -  \hat Y(t)} \right|^q} \\
	&\le  - \left[ {\kappa  - (q - 1)\varepsilon  - \frac{{({p^*} - 1)(q - 1)(q - 2)}}{{2({p^*} - q)}}\varepsilon } \right]\mathbb{E}\left[ {\int_0^t {{e^{\left( {q\eta  - \kappa } \right)s}}{{\left| {X(s) - \hat Y(s)} \right|}^q}ds} } \right]  \\
	&\quad+ \frac{1}{{{\varepsilon ^{q - 1}}}}\mathbb{E}\left[ {\int_0^t {{e^{\left( {q\eta  - \kappa } \right)s}}{{\left| {f(\hat Y(s)) - f(Y(s))} \right|}^q}ds} } \right] \\
	&\quad+ \frac{{({p^*} - 1)(q - 1)}}{{({p^*} - q){\varepsilon ^{\frac{{q - 2}}{2}}}}}\mathbb{E}\left[ {\int_0^t {{e^{\left( {q\eta  - \kappa } \right)s}}{{\left| {g(\hat Y(s)) - g(Y(s))} \right|}^q}ds} } \right] \\
	&\le \frac{K_5^q}{{{\varepsilon ^{q - 1}}}}\mathbb{E}\left[ {\int_0^t {{e^{\left( {q\eta  - \kappa } \right)s}}{{\left( {1 + {{\left| {\hat Y(s)} \right|}^{l-2}} + {{\left| {Y(s)} \right|}^{l-2}}} \right)}^q}{{\left| {\hat Y(s) - Y(s)} \right|}^q}ds} } \right]  \\
	&\quad+ \frac{{({p^*} - 1)(q - 1)}}{{({p^*} - q){\varepsilon ^{\frac{{q - 2}}{2}}}}}\mathbb{E}\left[ {\int_0^t {{e^{\left( {q\eta  - \kappa } \right)s}}{{\left( {1 + {{\left| {\hat Y(s)} \right|}^{l-2}} + {{\left| {Y(s)} \right|}^{l-2}}} \right)}^{{q}/{2}}}{{\left| {\hat Y(s) - Y(s)} \right|}^q}ds} } \right] \\
	&\le C\int_0^t {{e^{\left( {q\eta  - \kappa } \right)s}}\mathbb{E}\left[ {{{\left( {1 + {{\left| {\hat Y(s)} \right|}^{l-2}} + {{\left| {Y(s)} \right|}^{l-2}}} \right)}^q}{{\left| { \hat Y(s) - Y(s)} \right|}^q}} \right]ds} .
\end{split}
\end{equation}
Applying the same technique of \eqref{lem-con-2}, using Lemma \ref{lem-6.3}, Theorem \ref{th-bounded} and the fact $q \in \left[ {2,2p/(3l - 4)} \right]$, one deduces
\begin{equation}  \label{ce6}
	\mathbb{E}\left[ {{{\left( {1 + {{\left| {\hat Y(s)} \right|}^{l-2}} + {{\left| {Y(s)} \right|}^{l-2}}} \right)}^q}{{\left| { \hat Y(s) - Y(s)} \right|}^q}} \right]  \le {C}\Delta^{{q}/{2}} . 
\end{equation} 
Inserting \eqref{ce6} into \eqref{ce1} yields
\begin{equation*}
	{e^{\left( {q\eta  - \kappa } \right)t}}\mathbb{E}{\left| {X(t) -  \hat Y(t)} \right|^q} \le C{\Delta ^{{q}/{2}}}\int_0^t {{e^{\left( {q\eta  - \kappa } \right)s}}ds} \le \frac{{C{e^{\left( {q\eta  - \kappa } \right)t}}}}{{q\eta  - \kappa }}{\Delta ^{{q}/{2}}},
\end{equation*} 
which implies
\begin{equation*}
	\mathbb{E}{\left| {X(t) -  \hat Y(t)} \right|^q} \le C\Delta^{{q}/{2}}, \quad \forall t \ge 0.
\end{equation*}
The first assertion is obtained as follows. Using the triangle inequality and Lemma \ref{lem-6.3} we obtain 
\begin{equation*}
	\mathbb{E}{\left| {X(t) -  Y(t)} \right|^q} \le {2^{q - 1}}\mathbb{E}{\left| {X(t) -  \hat Y(t)} \right|^q} + {2^{q - 1}}\mathbb{E}{\left| {\hat Y(t) - Y(t)} \right|^q} \le C\Delta^{{q}/{2}}, \quad \forall t\ge 0.
\end{equation*}
The proof is therefore complete.
\end{proof}

\section{Exponential stability in $p$th moment}  \label{sec7}
Since stability describes the dynamical behavior more precisely than boundedness, our goal in this section is to show the $p$th moment exponential stability of the adaptive time-stepping EM scheme \eqref{scheme}. We first impose the following assumption.

\begin{assumption}  \label{A6.1}
	There exists a pair of positive constants $p \ge 2$ and $\nu $ such that
	\begin{equation*}  
		\left\langle {x,f(x)} \right\rangle  + \frac{{p - 1}}{2}{\left| {g(x)} \right|^2} \le  - \nu {\left| x \right|^2},  \quad \forall x \in \mathbb{R}^d.
	\end{equation*}
\end{assumption}

\begin{lemma} (\cite[Theorem 4.4.4]{mao2008stochastic}) \label{lem-expon}
Under Assumption \ref{A6.1}, the solution $X(t)$ of SDE \eqref{equation} satisfies
\begin{equation*}
	\mathbb{E}{\left| {X(t)} \right|^p} \le {\left| {{x_0}} \right|^2}{e^{ - p\nu t}}, \quad \forall t\ge 0.
\end{equation*}
\end{lemma}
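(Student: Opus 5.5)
The statement is the classical $p$th moment exponential stability estimate for the exact solution under Assumption \ref{A6.1}. I would prove it directly with It\^o's formula applied to $e^{p\nu t}|X(t)|^p$, localized by stopping times. (With the standard argument the constant in front is $|x_0|^p$; the displayed $|x_0|^2$ presumably means $|x_0|^p$, and I would prove the bound with $|x_0|^p$.)

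\textbf{Step 1 (localization).} For $k>|x_0|$ define the stopping time $\tau_k=\inf\{t\ge0:|X(t)|\ge k\}$. By \eqref{llc} and the non-explosion guaranteed under the Khasminskii-type condition (Assumption \ref{A6.1} implies \eqref{A2.1-1} with $K_1=K_2=0$), we have $\tau_k\uparrow\infty$ a.s.

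\textbf{Step 2 (It\^o's formula).} Apply It\^o's formula to $V(t,x)=e^{p\nu t}|x|^p$. Since $p\ge2$, $V$ is $C^{1,2}$, and
\begin{equation*}
\mathcal{L}V(t,x)=e^{p\nu t}\Big(p\nu|x|^p+p|x|^{p-2}\big(\langle x,f(x)\rangle+\tfrac12|g(x)|^2\big)+\tfrac{p(p-2)}{2}|x|^{p-4}|x^{\rm T}g(x)|^2\Big).
\end{equation*}
Using $|x^{\rm T}g(x)|^2\le|x|^2|g(x)|^2$, this is bounded by
\begin{equation*}
e^{p\nu t}\Big(p\nu|x|^p+p|x|^{p-2}\big(\langle x,f(x)\rangle+\tfrac{p-1}{2}|g(x)|^2\big)\Big).
\end{equation*}
Assumption \ref{A6.1} then makes the bracket at most $-\nu|x|^2$, so $\mathcal{L}V\le0$. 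Integrating up to $t\wedge\tau_k$ and taking expectations, the stochastic integral vanishes because the integrand is bounded before $\tau_k$. This gives
\begin{equation*}
\mathbb{E}\big[e^{p\nu(t\wedge\tau_k)}|X(t\wedge\tau_k)|^p\big]\le|x_0|^p.
\end{equation*}

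\textbf{Step 3 (limit).} Let $k\to\infty$ and apply Fatou's lemma, using path continuity and $\tau_k\to\infty$. This yields $e^{p\nu t}\mathbb{E}|X(t)|^p\le|x_0|^p$, which is the claim.

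The only delicate point is the diffusion term when $p\in[2,4)$, where $|x|^{p-4}$ is singular at the origin. This causes no trouble, because $|x|^{p-4}|x^{\rm T}g|^2\le|x|^{p-2}|g|^2$ and $|x|^p$ is $C^2$ for $p\ge2$; the origin can be handled as in \cite[Theorem 4.4.4]{mao2008stochastic}.
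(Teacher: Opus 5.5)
Your proof is correct and is essentially the standard Lyapunov-function argument behind the cited result; the paper itself gives no proof and only cites Mao's book. The argument is It\^o's formula for $e^{p\nu t}|x|^p$, the bound $\mathcal{L}V\le 0$ from Assumption \ref{A6.1} after using $|x^{\rm T}g(x)|^2\le |x|^2|g(x)|^2$, localization by $\tau_k$, and Fatou's lemma. You are also right that the constant must be $|x_0|^p$ rather than $|x_0|^2$: as printed, the bound already fails at $t=0$ whenever $p>2$ and $|x_0|>1$, and the paper's numerical counterpart, Theorem \ref{Th-numer-stability}, indeed uses $|x_0|^p$.
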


It was pointed out in \cite[p. 299]{higham2003exponential} that the result of Lemma \ref{lem-expon} forces $f(\mathbf{0}) = \mathbf{0}$ and $g(\mathbf{0}) = \mathbf{0}$. Based on this, we give the following assumption on the adaptive timesteps, which is slightly stronger than Assumption \ref{A-stepsize}.

\begin{assumption}  \label{A-stability}
Assume that the adaptive timestep function $\delta: \mathbb{R}^d \to (0, \delta_{\max}]$ is continuous, where $\delta_{\max} \in (0, \infty)$ is a positive constant. Moreover, there exists a constant $\gamma \in (0, 1/2)$ such that
\begin{equation*}
	\left| {f(x)} \right|{\left( {\delta (x)} \right)^\gamma } \le \left| x \right|,\quad \left| {g(x)} \right|^2{\left( {\delta (x)} \right)^\gamma } \le {\left| x \right|^2}, \quad \forall x \in \mathbb{R}^d.
\end{equation*}
\end{assumption}

\begin{remark}
The requirements $f(\mathbf{0}) = \mathbf{0}$ and $g(\mathbf{0}) = \mathbf{0}$ guarantee that the adaptive timesteps are well defined at $x = \mathbf{0}$.
Here we present an adaptive timestep function satisfying Assumption \ref{A-stability},
\begin{equation*}
	\delta(x)
	=
	\begin{cases}
		\displaystyle
		\delta_{\max}
		\wedge
		\left(\frac{\lvert x\rvert}{\lvert f(x)\rvert}\right)^{\frac{1}{\gamma}}
		\wedge
		\left(\frac{\lvert x\rvert^{2}}{\lvert g(x)\rvert^{2}}\right)^{\frac{1}{\gamma}},
		& x\neq \mathbf{0}, \\[1.2ex]
		\delta_{\max},
		& x=\mathbf{0}.
	\end{cases}
\end{equation*}

\end{remark}

\begin{theorem}  \label{Th-numer-stability}
Let \eqref{llc}, Assumptions \ref{A6.1} and \ref{A-stability} hold. Then, 
for any $\varepsilon \in (0,p\nu)$, there is a constant $\delta_*$ such that for any $0 <\delta_{\max} \le \delta_*$,
the continuous-time adaptive time-stepping EM solution \eqref{continuous2} has the property
\begin{equation*}
	\mathbb{E}{\left| {\hat Y(t)} \right|^p} \le {\left| {{x_0}} \right|^p}{e^{ - \left( {p\nu  - \varepsilon } \right)t}}, \quad \forall t\ge 0.
\end{equation*}
\end{theorem}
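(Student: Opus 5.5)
I will work with $|x|^p$ directly, not with $V_p(x)=(1+|x|^2)^{p/2}$. Assumption \ref{A6.1} is homogeneous and $f(\mathbf 0)=g(\mathbf 0)=\mathbf 0$, so no additive constant should appear. The plan is to prove a one-step contraction for the discrete chain, of the form
\begin{equation*}
\mathbb{E}\left[ |Y_{t_{n+1}}|^p \,\middle|\, \mathcal{F}_{t_n}\right] \le \left(1-(p\nu-\varepsilon)\delta_n\right)|Y_{t_n}|^p \quad \text{a.s.},
\end{equation*}
and the analogous bound on a partial step $[\underline t,t]$. These will then be turned into the continuous-time statement through a supermartingale and optional stopping, as in the proof of Theorem \ref{th-bounded}.

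\textbf{Step 1 (one-step expansion).} On $\{Y_{t_n}\neq\mathbf 0\}$, write $|Y_{t_{n+1}}|^p=|Y_{t_n}|^p(1+\zeta_n)^{p/2}$. Here $\zeta_n$ is given by \eqref{a26} with the denominator $1+|Y^K_{t_n}|^2$ replaced by $|Y_{t_n}|^2$. On $\{Y_{t_n}=\mathbf 0\}$ we have $Y_{t_{n+1}}=\mathbf 0$, so there is nothing to prove. Applying \cite[Lemma 3.3]{yang2018explicit} as in \eqref{sum1}, and using Remark \ref{remark-steps} together with Assumption \ref{A-stability} (which gives $|f(x)|\delta(x)^\gamma\le|x|$ and $|g(x)|^2\delta(x)^\gamma\le|x|^2$), every term beyond the leading one is bounded by $C_0\delta_n^{2(1-\gamma)}$ or $C_0\delta_n^{3(1-\gamma)/2}$ with no additive constant. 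The leading term is
\begin{equation*}
\frac{p\delta_n}{2}|Y_{t_n}|^{p-2}\left(2\langle Y_{t_n},f(Y_{t_n})\rangle+(p-1)|g(Y_{t_n})|^2\right)\le -p\nu\delta_n|Y_{t_n}|^p,
\end{equation*}
using the identity $|x|^2|g|^2+(p-2)|x^{\rm T}g|^2\le(p-1)|x|^2|g|^2$ for the combination of the first and second moments of $\zeta_n$. This gives
\begin{equation*}
\mathbb{E}\left[|Y_{t_{n+1}}|^p\,\middle|\,\mathcal F_{t_n}\right]\le\left(1-p\nu\delta_n+C_0\delta_n^{1+\theta}\right)|Y_{t_n}|^p
\end{equation*}
for some $\theta>0$. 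The condition $\gamma<1/2$ is exactly what makes every remainder exponent strictly larger than $1$; for instance $2(1-\gamma)>1$.

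\textbf{Step 2 (choice of $\delta_*$).} Choose $\delta_*$ with $C_0\delta_*^{\theta}\le\varepsilon/2$. Then the one-step factor is at most $1-(p\nu-\varepsilon/2)\delta_n$. Since $e^{\lambda\delta}(1-(\lambda+\varepsilon/2)\delta)\le 1$ for $\lambda=p\nu-\varepsilon$ and small $\delta$, I shrink $\delta_*$ further so that this also holds for all $\delta\le\delta_*$. It follows that $e^{\lambda t_n}|Y_{t_n}|^p$ is a nonnegative supermartingale with respect to $\{\mathcal F_{t_n}\}$; here I use Remark \ref{remark-filtration}, by which $t_{n+1}$ is $\mathcal F_{t_n}$-measurable.

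\textbf{Step 3 (continuous time).} The same computation on $[\underline t,t]$, with $t-\underline t\le\delta_{n_t}$ in place of $\delta_n$, gives
\begin{equation*}
e^{\lambda t}\,\mathbb{E}\left[|\hat Y(t)|^p\,\middle|\,\mathcal F_{\underline t}\right]\le e^{\lambda\underline t}|Y_{\underline t}|^p.
\end{equation*}
This uses $(1-(\lambda+\varepsilon/2)s+\dots)e^{\lambda s}\le1$ for $s\le\delta_*$. Using the stopping time $n_t\wedge m$, optional stopping, and Fatou's lemma as in Theorem \ref{th-bounded}, I get $\mathbb E[e^{\lambda\underline t}|Y_{\underline t}|^p]\le|x_0|^p$. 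Combining the two bounds yields $\mathbb E|\hat Y(t)|^p\le|x_0|^pe^{-(p\nu-\varepsilon)t}$. Theorem \ref{th2.1} supplies the almost sure attainability of $t$, since Assumption \ref{A6.1} implies \eqref{A2.1-1}.

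The main obstacle is Step 1. The remainder must be bounded by a constant times $\delta_n^{1+\theta}|Y_{t_n}|^p$, uniformly in $Y_{t_n}$, and in particular without any additive constant. This has to be checked term by term for the cubic and higher part $\zeta_n^3P_k(\zeta_n)$, where mixed terms such as $\delta_n|f|\,|g\,\Delta W_n|/|x|^2$ appear. I would first bound each monomial using $|f|\le|x|\delta^{-\gamma}$ and $|g|\le|x|\delta^{-\gamma/2}$, and then verify that the total power of $\delta_n$ is at least $\min\{2(1-\gamma),\,3(1-\gamma)/2\}>1$.
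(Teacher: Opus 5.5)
Your structure is the paper's: the expansion via \cite[Lemma 3.3]{yang2018explicit}, the leading term controlled by Assumption \ref{A6.1}, a higher-order remainder absorbed by choosing $\delta_*$, exponential weights, a supermartingale with optional stopping and Fatou, and then the partial step. Working with $|x|^p$ on $\{Y_{t_n}\neq\mathbf 0\}$, and using $f(\mathbf 0)=g(\mathbf 0)=\mathbf 0$ on the complement, is a legitimate simplification of the paper's $(\eta+|x|^2)^{p/2}$ with $\eta\downarrow 0$.

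\textbf{The gap is in the remainder count of Step 1.} You allow terms of size $C_0\delta_n^{3(1-\gamma)/2}|Y_{t_n}|^p$ and assert $\min\{2(1-\gamma),3(1-\gamma)/2\}>1$ for all $\gamma<1/2$. But $3(1-\gamma)/2>1$ only when $\gamma<1/3$. Assumption \ref{A-stability} allows $\gamma\in[1/3,1/2)$, and there such a term is not $o(\delta_n)$. It therefore cannot be absorbed into $-p\nu\delta_n|Y_{t_n}|^p$ by any choice of $\delta_*$, and the contraction in Step 2 does not follow.

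\textbf{Where the bad terms come from.} The terms of that size are exactly those that are odd in $\Delta W_n$ (below $f,g$ are evaluated at $Y_{t_n}$):
\begin{itemize}
\item $2\langle f\delta_n,g\Delta W_n\rangle$ in $\zeta_n$;
\item in $\zeta_n^2$, the products of $2\langle Y_{t_n},g\Delta W_n\rangle$ with $2\langle Y_{t_n},f\rangle\delta_n$ and with $|g\Delta W_n|^2$;
\item $\langle Y_{t_n},g\Delta W_n\rangle^3$ in $\zeta_n^3$.
\end{itemize}
Bounding these in absolute value, as your treatment of mixed terms like $\delta_n|f|\,|g\Delta W_n|/|x|^2$ suggests, produces exactly $\delta_n^{3(1-\gamma)/2}$.

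\textbf{The missing ingredient.} Conditionally on $\mathcal F_{t_n}$, $\Delta W_n$ is a centred Gaussian vector (Remark \ref{remark-steps}, \eqref{kl7}). Hence the conditional expectation of every odd function of $\Delta W_n$ vanishes. After these cancellations every surviving term is at most $C\delta_n^{2(1-\gamma)}|Y_{t_n}|^p$. For example, $\mathbb E[\langle Y_{t_n},g\Delta W_n\rangle\langle f\delta_n,g\Delta W_n\rangle\,|\,\mathcal F_{t_n}]\le\delta_n^2|Y_{t_n}|\,|f|\,|g|^2\le\delta_n^{2(1-\gamma)}|Y_{t_n}|^4$. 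Also $\mathbb E[|\zeta_n|^j\,|\,\mathcal F_{t_n}]\le C\delta_n^{j(1-\gamma)/2}\le C\delta_n^{2(1-\gamma)}$ for $j\ge 4$, since $\delta_n\le\delta_{\max}$. This is what the paper records in \eqref{gh8}.

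With this correction your $\theta$ equals $1-2\gamma$, which is positive exactly because $\gamma<1/2$. The choice $C_0\delta_*^{1-2\gamma}\le\varepsilon/2$ then closes Step 2, and the rest of your argument, including the partial-step bound in Step 3, goes through as written.
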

\begin{proof}
For any interger $n \ge 0$ and any $\eta \in (0, 1]$, from \eqref{scheme} one sees
\begin{equation} \label{gh11}
	{\left( {\eta  + {{\left| {{Y_{{t_{n + 1}}}}} \right|}^2}} \right)^{p/2}} = {\left( {\eta  + {{\left| {{Y_{{t_n}}}} \right|}^2}} \right)^{p/2}}{\left( {1 + \zeta _n } \right)^{p/2}}  ,
\end{equation}
where
\begin{equation*}
	\zeta _n  = \frac{{2\left\langle {{Y_{{t_n}}},f({Y_{{t_n}}})} \right\rangle \delta _n  + {{\left| {f({Y_{{t_n}}})} \right|}^2}{{\left( {\delta _n } \right)}^2} + {{\left| {g({Y_{{t_n}}})\Delta {W_n}} \right|}^2} + 2\left\langle {{Y_{{t_n}}} + f({Y_{{t_n}}})\delta _n ,g({Y_{{t_n}}})\Delta {W_n}} \right\rangle }}{{\eta  + {{\left| {{Y_{{t_n}}}} \right|}^2}}} .
\end{equation*}
For a given constant $p \ge 2$, choosing a non-negative integer $k$ such that $2k < p \le 2(k+1)$. By \cite[Lemma 3.3]{yang2018explicit}, we get
\begin{equation}  \label{gh7}
	\begin{split}
		&\mathbb{E}\left[ {{{\left( {\eta + {{\left| {{Y_{{t_{n + 1}}}}} \right|}^2}} \right)}^{p/2}}|{\mathcal{F}_{{t_n}}}} \right] \\
		\le& {\left( {\eta + {{\left| {{Y_{{t_n}}}} \right|}^2}} \right)^{p/2}}\left[ {1 + p/2\mathbb{E}(\zeta _n |{\mathcal{F}_{{t_n}}}) + \frac{{p(p - 2)}}{8}\mathbb{E}({{(\zeta _n )}^2}|{\mathcal{F}_{{t_n}}}) + \mathbb{E}({{(\zeta _n )}^3}{P_k}(\zeta _n )|{\mathcal{F}_{{t_n}}})} \right],
	\end{split}
\end{equation}
where $P_k(\cdot)$ represents a $k$th-order polynomial whose coefficients
depend only on $p$. By an argument analogous to $\mathbf{Step\;2}$ in Theorem \ref{th2.1}, applying Assumption \ref{A-stability}, one has
\begin{equation}  \label{gh8}
\begin{split}
	&\mathbb{E}\left( {\zeta _n |{\mathcal{F}_{{t_n}}}} \right) \le {\left( {\eta + {{\left| {{Y_{{t_n}}}} \right|}^2}} \right)^{ - 1}}\left[ {2\left\langle {{Y_{{t_n}}},f({Y_{{t_n}}})} \right\rangle  + {{\left| {g({Y_{{t_n}}})} \right|}^2}} \right]\delta _n  + C{\left( {\delta _n } \right)^{2(1 - \gamma )}} \quad \text{a.s.} \\
	&\mathbb{E}\left( {{{\left( {\zeta _n } \right)}^2}|{\mathcal{F}_{{t_n}}}} \right) \le 4{\left( {\eta  + {{\left| {{Y_{{t_n}}}} \right|}^2}} \right)^{ - 2}}{\left| {{{({Y_{{t_n}}})}^{\rm T}}g({Y_{{t_n}}})} \right|^2}{\delta _n}  + C{\left( {\delta _n } \right)^{2(1 - \gamma )}} \quad \text{a.s.}  \\
	&\mathbb{E}\left( {{{\left( {\zeta _n } \right)}^3}{P_k}(\zeta _n )|{\mathcal{F}_{{t_n}}}} \right) \le C{\left( {\delta _n } \right)^{2(1 - \gamma )}} \quad \text{a.s.}
\end{split}
\end{equation}
Inserting \eqref{gh8} into \eqref{gh7}, using Assumption \ref{A6.1} yields
\begin{equation}  \label{gh9}
\begin{split}
	&\mathbb{E}\left[ {{{\left( {\eta + {{\left| {{Y_{{t_{n + 1}}}}} \right|}^2}} \right)}^{p/2}}|{\mathcal{F}_{{t_n}}}} \right] \\
	&\le {\left( {\eta + {{\left| {{Y_{{t_n}}}} \right|}^2}} \right)^{p/2}}\Bigg[ {1 + C{{\left( {\delta _n } \right)}^{2(1 - \gamma )}}}   \\
	& \quad+ p{\delta _n}\frac{{\left( {\eta  + {{\left| {{Y_{{t_n}}}} \right|}^2}} \right)\left( {2\left\langle {{Y_{{t_n}}},f({Y_{{t_n}}})} \right\rangle  + {{\left| {g({Y_{{t_n}}})} \right|}^2}} \right) + (p - 2){{\left| {{{({Y_{{t_n}}})}^{\rm T}}g({Y_{{t_n}}})} \right|}^2}}}{{2{{\left( {\eta  + {{\left| {{Y_{{t_n}}}} \right|}^2}} \right)}^2}}}\Bigg]  \\
	&\le \left( {1 + C{{\left( {\delta _n } \right)}^{2(1 - \gamma )}}} \right){\left( {\eta  + {{\left| {{Y_{{t_n}}}} \right|}^2}} \right)^{p/2}} + \frac{p\delta _n}{2} {\left( {\eta  + {{\left| {{Y_{{t_n}}}} \right|}^2}} \right)^{p/2 - 1}}\left( {2\left\langle {{Y_{{t_n}}},f({Y_{{t_n}}})} \right\rangle  + (p - 1){{\left| {g({Y_{{t_n}}})} \right|}^2}} \right) \\
	&\le \left( {1 + C{{\left( {\delta _n } \right)}^{2(1 - \gamma )}}} \right){\left( {\eta  + {{\left| {{Y_{{t_n}}}} \right|}^2}} \right)^{p/2}} - p\nu \delta _n {\left| {{Y_{{t_n}}}} \right|^2}{\left( {\eta  + {{\left| {{Y_{{t_n}}}} \right|}^2}} \right)^{p/2 - 1}} \quad \text{a.s.}
\end{split}
\end{equation}
Due to the fact $\eta \in (0, 1]$, applying Theorem \ref{th-bounded} one has 
\begin{equation*}
	\begin{split}
		\mathbb{E}{\left( {\eta  + {{\left| {{Y_{{t_{n + 1}}}}} \right|}^2}} \right)^{p/2}} \le \mathbb{E}{\left( {1 + {{\left| {{Y_{{t_{n + 1}}}}} \right|}^2}} \right)^{p/2}} \le {2^{p/2 - 1}}\left( {1 + \mathop {\sup }\limits_{t \ge 0} \mathbb{E}{{\left| {Y(t)} \right|}^p}} \right) \le C .
	\end{split}
\end{equation*}
Hence, applying the dominated convergence theorem for conditional expectation (see e.g., \cite[p. 218]{shiryaev2019probability}) we get
\begin{equation*}
\begin{split}
	\mathbb{E}\left( {{{\left| {{Y_{{t_{n + 1}}}}} \right|}^p}|{\mathcal{F}_{{t_n}}}} \right) =& \mathop {\lim }\limits_{\eta  \to {0^ + }} \mathbb{E}\left[ {{{\left( {\eta  + {{\left| {{Y_{{t_{n + 1}}}}} \right|}^2}} \right)}^{p/2}}|{\mathcal{F}_{{t_n}}}} \right] \\
	\le& \left( {1 + C{{\left( {\delta _n } \right)}^{2(1 - \gamma )}}} \right){\left| {{Y_{{t_n}}}} \right|^p} - {p\nu }\delta _n {\left| {{Y_{{t_n}}}} \right|^p} \quad \text{a.s.}
\end{split}
\end{equation*}
For any $\varepsilon  \in (0,p\nu )$, choose $\delta_* > 0$ sufficiently small such that $C\delta _*^{1 - 2\gamma } < \varepsilon $ and $(p\nu - \varepsilon)\delta_* < 1$. For $0 < \delta_{\max} \le \delta_*$, one has
\begin{equation*}
\begin{split}
	\mathbb{E}\left( {{{\left| {{Y_{{t_{n + 1}}}}} \right|}^p}|{{\cal F}_{{t_n}}}} \right) \le \left[ {1 - \left( {p\nu  - \varepsilon } \right)\delta _n } \right]{\left| {{Y_{{t_n}}}} \right|^p} \quad \text{a.s.}
\end{split}
\end{equation*}
Multiplying both sides by ${e^{\left( {{p\nu } - \varepsilon } \right){t_{n + 1}}}}$, and using the fact $ 1 - x \le e^{-x}$ for any $x \ge 0$, we have
\begin{equation*} 
	{e^{\left( {{p\nu } - \varepsilon } \right){t_{n + 1}}}}\mathbb{E}\left( {{{\left| {{Y_{{t_{n + 1}}}}} \right|}^p}|{\mathcal{F}_{{t_n}}}} \right) \le {e^{\left( {{p\nu } - \varepsilon } \right){t_{n + 1}}}}{e^{ - \left( {{p\nu } - \varepsilon } \right)\delta _n }}{\left| {{Y_{{t_n}}}} \right|^p} = {e^{\left( {{p\nu } - \varepsilon } \right){t_n}}}{\left| {{Y_{{t_n}}}} \right|^p} \quad \text{a.s.}
\end{equation*}
Since $t_{n+1}$ is $\mathcal{F}_{t_n}$-measurable, the above inequality then gives
\begin{equation}  \label{gh10}
	\mathbb{E}\left( {{e^{\left( {{p\nu } - \varepsilon } \right){t_{n + 1}}}}{{\left| {{Y_{{t_{n + 1}}}}} \right|}^p}|{\mathcal{F}_{{t_n}}}} \right) \le {e^{\left( {{p\nu } - \varepsilon } \right){t_n}}}{\left| {{Y_{{t_n}}}} \right|^p} \quad \text{a.s.}
\end{equation}
Hence, 
\begin{equation*}
	{M_n} := {e^{\left( {p\nu  - \varepsilon } \right){t_n}}}{\left| {{Y_{{t_n}}}} \right|^p}
\end{equation*}
is a supermartingale with respect to ${\{ {\mathcal{F}_{{t_n}}}\} _{n \ge 0}}$. For any $t > 0$ and $m \in \mathbb{N}$, define the bounded stopping time ${\vartheta  _{t,m}} = {n_t} \wedge m$. By the optional stopping theorem \cite[Theorem 7.14, p.~189]{klebaner2012introduction}, one has $\mathbb{E}{M_{{\vartheta _{t,m}} }} \le \mathbb{E}{M_0} = |x_0|^p$, that is,
\begin{equation*}
	\mathbb{E}\left( {{e^{\left( {p\nu  - \varepsilon } \right){t_{{\varsigma _{t,m}}}}}}{{\left| {{Y_{{t_{{\varsigma _{t,m}}}}}}} \right|}^p}} \right) \le {\left| {{x_0}} \right|^p}.
\end{equation*}
Letting $m \to \infty$ and using Fatou's lemma yields
\begin{equation}  \label{gh14}
	\mathbb{E}\left( {{e^{\left( {p\nu  - \varepsilon } \right)\underline{t}}}{{\left| {{Y_{\underline{t}}}} \right|}^p}} \right) \le {\left| {{x_0}} \right|^p} .
\end{equation}

On the other hand, for the partial timestep from $\underline{t}$ to $t$, similar to \eqref{gh11}-\eqref{gh10} we get
\begin{equation}  \label{gh13}
	\mathbb{E}\left( {{e^{\left( {{p\nu } - \varepsilon } \right)t}}{{\left| {\hat Y(t)} \right|}^p}|{\mathcal{F}_{{\underline{t}}}}} \right) \le {e^{\left( {{p\nu } - \varepsilon } \right){\underline{t}}}}{\left| {{Y_{{\underline{t}}}}} \right|^p} \quad \text{a.s.}
\end{equation}
Taking expectations on both sides of \eqref{gh13}, and  using \eqref{gh14} yields
\begin{equation*}
	{e^{\left( {p\nu  - \varepsilon } \right)t}}\mathbb{E}{\left| {\hat Y(t)} \right|^p} \le {\left| {{x_0}} \right|^p}, \quad  t \ge 0.
\end{equation*}
The desired assertion follows.
\end{proof}

\section{Ergodicity and approximation of numerical invariant measure}  \label{sec8}
In this section, we first show that the adaptive time-stepping EM scheme  admits a unique numerical invariant probability measure. Under a slightly stronger assumption, we further establish its polynomial ergodicity in the $L^q$-Wasserstein distance. Finally, by combining the ergodicity of the underlying SDE with the uniform-in-time strong convergence rate of the numerical scheme, we derive a convergence rate for the numerical invariant measure toward the invariant measure of the SDE \eqref{equation}.

We first give the definition of an $n$-step transition kernel.

\begin{definition}
Let $(E, \mathcal{B}(E))$ be a measurable space and let $\Psi=\{\Psi_n\}_{n\ge0}$ be a time-homogeneous Markov chain on the state space $E$. The one-step transition kernel of $\Psi$ is the mapping $P_\Psi : E \times \mathcal{B}(E) \to [0,1]$ defined by
\begin{equation*}
	P_\Psi(x,A):= \mathbb P(\Psi_1 \in A \mid \Psi_0 = x),
	\quad x \in E, \quad A\in \mathcal{B}(E).
\end{equation*}
For each fixed $x\in E$, $P_\Psi(x,\cdot)$ is a probability measure on $(E,\mathcal{B}(E))$, and for each fixed $A \in \mathcal{B}(E)$, $P_\Psi(\cdot,A)$ is $\mathcal{B}(E)$-measurable.
For $n \ge 1$, the $n$-step transition kernel is defined by
	\begin{equation*}
		{P}_\Psi ^n(x,A): = \mathbb{P}({\Psi _{n}} \in A|{\Psi _0} = x) \quad x \in E, \quad A \in \mathcal{B}(E).
	\end{equation*}
	with $P_\Psi^1=P_\Psi$. 
Moreover, the family $\{P_\Psi^n\}_{n\ge1}$ satisfies the Chapman--Kolmogorov equations
\begin{equation*}
	P_\Psi^{m+n}(x,A) = \int_E P_\Psi^m(x,dy) \, P_\Psi^n(y,A), 
	\quad x\in E, \; A\in \mathcal{B}(E), \; m,n\ge 1.
\end{equation*}
\end{definition}

Before showing that numerical solution process
${\{ {Y_{{t_n}}}\} _{n \ge 0}}$	forms a time-homogeneous Markov chain, we first give an essential lemma.

\begin{lemma} (\cite[Lemma 1.9.2, p. 87]{mao2008stochastic})
Let $h(x, \omega)$ be a scalar bounded measurable random function of $x$, independent of $\mathcal{F}_s$. Let $\zeta $ be an $\mathcal{F}_s$-measurable random variable. Then $\mathbb{E}(h(\zeta ,\omega )|{{\cal F}_s}) = H(\zeta)$, where $H(x) = \mathbb{E}h(x, \omega)$.
\end{lemma}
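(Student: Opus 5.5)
This is the standard result from Mao's book: if $h(x,\omega)$ is bounded, measurable and independent of $\mathcal{F}_s$, and $\zeta$ is $\mathcal{F}_s$-measurable, then $\mathbb{E}(h(\zeta,\omega)\mid\mathcal{F}_s)=H(\zeta)$ with $H(x)=\mathbb{E}h(x,\omega)$. The plan is a monotone class argument.

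\textbf{Step 1 (product functions).} First take $h(x,\omega)=\phi(x)\psi(\omega)$ with $\phi$ bounded Borel and $\psi$ bounded and independent of $\mathcal{F}_s$. Then $H(x)=\phi(x)\mathbb{E}\psi$. Since $\phi(\zeta)$ is $\mathcal{F}_s$-measurable and bounded, we can pull it out of the conditional expectation:
\begin{equation*}
\mathbb{E}(\phi(\zeta)\psi\mid\mathcal{F}_s)=\phi(\zeta)\,\mathbb{E}(\psi\mid\mathcal{F}_s)=\phi(\zeta)\,\mathbb{E}\psi=H(\zeta).
\end{equation*}
The middle equality uses the independence of $\psi$ from $\mathcal{F}_s$.

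\textbf{Step 2 (monotone class).} Here "independent of $\mathcal{F}_s$" is read as follows: the map $(x,\omega)\mapsto h(x,\omega)$ is $\mathcal{B}(\mathbb{R}^d)\otimes\mathcal{G}$-measurable for some $\sigma$-algebra $\mathcal{G}$ independent of $\mathcal{F}_s$. Let $\mathcal{H}$ be the class of bounded $\mathcal{B}(\mathbb{R}^d)\otimes\mathcal{G}$-measurable $h$ for which the identity holds. We check three things:
\begin{itemize}
\item $\mathcal{H}$ is a vector space containing constants.
\item $\mathcal{H}$ is closed under bounded monotone limits. If $h_k\uparrow h$ boundedly, then $H_k(x)\uparrow H(x)$ pointwise by monotone convergence, and $\mathbb{E}(h_k(\zeta,\cdot)\mid\mathcal{F}_s)\to\mathbb{E}(h(\zeta,\cdot)\mid\mathcal{F}_s)$ a.s. by conditional monotone convergence.
\item By Step 1, $\mathcal{H}$ contains the indicators $\mathbf{1}_{A\times G}$ with $A\in\mathcal{B}(\mathbb{R}^d)$, $G\in\mathcal{G}$. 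These rectangles form a $\pi$-system generating the product $\sigma$-algebra.
\end{itemize}
The functional monotone class theorem then gives that $\mathcal{H}$ contains all bounded product-measurable $h$. Joint measurability of $h$ is needed so that $h(\zeta(\omega),\omega)$ and $H$ are measurable; for rectangles this is immediate, and it passes to limits.

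\textbf{Main obstacle.} The only delicate point is measurability. One needs $H$ to be Borel, which follows from Fubini/Tonelli applied to the product-measurable $h$. One also needs the composition $\omega\mapsto h(\zeta(\omega),\omega)$ to be $\mathcal{F}$-measurable, which holds because $\omega\mapsto(\zeta(\omega),\omega)$ is measurable into the product space. Once these are verified, the monotone class step is routine.
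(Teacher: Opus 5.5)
Your proposal is correct. The paper gives no proof of this lemma and only cites Mao, whose argument is the one you use: verify the identity for $h(x,\omega)=\sum_i u_i(x)v_i(\omega)$ with $v_i$ independent of $\mathcal{F}_s$, then extend by approximation. You make the extension step precise through the functional monotone class theorem and check the measurability of $H$ and of $\omega\mapsto h(\zeta(\omega),\omega)$. Your reading of ``independent of $\mathcal{F}_s$'' as $\mathcal{B}(\mathbb{R}^d)\otimes\mathcal{G}$-measurability with $\mathcal{G}$ independent of $\mathcal{F}_s$ is the right formalization, and it covers the paper's use $h(x,\omega)=\mathcal{I}_A(\varphi(x,\xi_{n+1}(\omega)))$ with $\mathcal{G}=\sigma(\xi_{n+1})$.
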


\begin{lemma}  \label{markov}  (Time-homogeneous Markov chain) Let Assumptions \ref{A2.1}, \ref{A-stepsize} and \ref{A4.1} hold. The numerical solution process
${\{ {Y_{{t_n}}}\} _{n \ge 0}}$ generated by the adaptive time-stepping EM scheme \eqref{scheme} is a time-homogeneous Markov chain on $(\mathbb{R}^d, \mathcal{B}(\mathbb{R}^d))$. More precisely, for every $n \ge 0$, $Y_0 \in \mathbb{R}^d$ and $A \in \mathcal{B}(\mathbb{R}^d)$,
\begin{equation*}
\begin{split}
	\mathbb{P}\left( {{Y_{{t_{n + 1}}}} \in A|{\mathcal{F}_{{t_n}}}} \right) = \mathbb{P}\left( {{Y_{{t_{n + 1}}}} \in A|{Y_{{t_n}}}} \right)
\end{split}
\end{equation*}
and
\begin{equation*}
\begin{split}
	\mathbb{P}\left( {{Y_{{t_{n + 1}}}} \in A|{Y_{{t_n}}} = x } \right) = \mathbb{P}\left( {{Y_{{t_1}}} \in A|{Y_{{0}}} = x } \right), \quad x \in \mathbb{R}^d.
\end{split}
\end{equation*}
\end{lemma}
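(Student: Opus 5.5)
The plan is to write the one-step update as a fixed measurable map of the current state and a noise term that is independent of $\mathcal{F}_{t_n}$, and then apply the lemma quoted just before (Mao, Lemma 1.9.2).

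\textbf{Step 1: the strong Markov property supplies independent noise.} By Theorem \ref{th2.1}, every $t_n$ is almost surely finite. By Remark \ref{remark-filtration}, $t_n$ is an $\{\mathcal{F}_t\}$-stopping time. Hence the strong Markov property of Brownian motion shows that
\[
\tilde W^{(n)}(s):=W(t_n+s)-W(t_n), \qquad s\ge 0,
\]
is again an $m$-dimensional Brownian motion, independent of $\mathcal{F}_{t_n}$.

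\textbf{Step 2: the one-step update as a fixed map.} Define
\[
\Phi(x,w):=x+f(x)\delta(x)+g(x)\,w(\delta(x)),
\]
for $x\in\mathbb{R}^d$ and $w\in C([0,\infty);\mathbb{R}^m)$. Since $\delta$, $f$ and $g$ are continuous, $\Phi$ is jointly Borel measurable. The scheme \eqref{scheme} then reads
\[
Y_{t_{n+1}}=\Phi\bigl(Y_{t_n},\tilde W^{(n)}\bigr),
\]
because $\Delta W_n=\tilde W^{(n)}(\delta(Y_{t_n}))$.

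Fix $A\in\mathcal{B}(\mathbb{R}^d)$ and set $h(x,\omega):=\mathcal{I}_A\bigl(\Phi(x,\tilde W^{(n)}(\omega))\bigr)$. This is a bounded measurable random function, independent of $\mathcal{F}_{t_n}$. Moreover, $Y_{t_n}$ is $\mathcal{F}_{t_n}$-measurable.

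\textbf{Step 3: applying the lemma.} The quoted lemma, used with $\mathcal{F}_s$ replaced by $\mathcal{F}_{t_n}$, gives
\[
\mathbb{P}(Y_{t_{n+1}}\in A\mid\mathcal{F}_{t_n})=H_n(Y_{t_n}), \qquad H_n(x):=\mathbb{P}\bigl(\Phi(x,\tilde W^{(n)})\in A\bigr).
\]
The lemma is stated for a fixed time $s$. Its proof, approximating $\zeta$ by simple functions, uses only the independence of $h$ from the $\sigma$-field and the measurability of $\zeta$, so it carries over verbatim to the stopped $\sigma$-field $\mathcal{F}_{t_n}$.

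Since $H_n(Y_{t_n})$ is $\sigma(Y_{t_n})$-measurable, conditioning on $\sigma(Y_{t_n})\subset\mathcal{F}_{t_n}$ (tower property) gives the first identity:
\[
\mathbb{P}(Y_{t_{n+1}}\in A\mid Y_{t_n})=H_n(Y_{t_n}).
\]

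\textbf{Step 4: time-homogeneity.} The law of $\tilde W^{(n)}$ on path space is Wiener measure for every $n$. Therefore $H_n(x)=H_0(x)$, where $H_0(x)=\mathbb{P}(x+f(x)\delta(x)+g(x)W(\delta(x))\in A)$. This is exactly $\mathbb{P}(Y_{t_1}\in A\mid Y_0=x)$, which gives the second identity.

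\textbf{Main obstacle.} The delicate point is Step 1. The increment $\Delta W_n$ is not independent of $\mathcal{F}_{t_n}$, because its length $\delta(Y_{t_n})$ is random (Remark \ref{remark-steps}). The fix is to condition on the whole shifted path $\tilde W^{(n)}$, which is independent of $\mathcal{F}_{t_n}$, rather than on the increment. This requires $t_n<\infty$ almost surely, which is why Theorem \ref{th2.1} (attainability) and Assumption \ref{A4.1} are invoked.
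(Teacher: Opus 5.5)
Your proof is correct, but it obtains the independent noise differently from the paper.

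\textbf{The paper's route.} The paper works with the normalized increment $\xi_{n+1}=\Delta W_n/\sqrt{\delta_n}$. From the $\mathcal{F}_{t_n}$-conditional Gaussianity of $\Delta W_n$ (Remark \ref{remark-steps}, taken from the literature) it deduces that $\xi_{n+1}\sim N(0,I_m)$ and that $\xi_{n+1}$ is independent of $\mathcal{F}_{t_n}$. It then writes $Y_{t_{n+1}}=\varphi(Y_{t_n},\xi_{n+1})$ with the finite-dimensional, jointly continuous map $\varphi(x,z)=x+f(x)\delta(x)+g(x)\sqrt{\delta(x)}\,z$, and freezes $x=Y_{t_n}$. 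Time-homogeneity follows because every $\xi_{n+1}$ has law $N(0,I_m)$.

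\textbf{Your route and what each buys.} You take the whole post-$t_n$ path $\tilde W^{(n)}$, whose independence from $\mathcal{F}_{t_n}$ comes straight from the strong Markov property, and evaluate it at the $\mathcal{F}_{t_n}$-measurable time $\delta(Y_{t_n})$ inside a path-space map $\Phi$. Your argument is more self-contained: it proves, rather than cites, the conditional-Gaussian fact the paper relies on. The cost is working on $C([0,\infty);\mathbb{R}^m)$. Joint measurability of $\Phi$ is unproblematic, since $(x,w)\mapsto w(\delta(x))$ is continuous for the topology of local uniform convergence. The paper's reduction to a single $\mathbb{R}^m$-valued Gaussian has a different payoff. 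The same $\varphi$ is reused verbatim for the Feller property (Lemma \ref{feller}). It also yields the explicit transition law $N(m(x),\Sigma(x))$ used for irreducibility and aperiodicity in Proposition \ref{th-a2}. You would recover that law at once from $H_0(x)=\mathbb{P}\bigl(x+f(x)\delta(x)+g(x)W(\delta(x))\in A\bigr)$.

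\textbf{A correction to your ``main obstacle'' paragraph.} Finiteness of $t_n$ needs no theorem. Since $\delta$ takes values in $(0,\infty)$, $t_n=\sum_{k<n}\delta_k$ is a finite sum and hence finite for every $\omega$. The attainability in Theorem \ref{th2.1} is a different statement: that the grid eventually passes any fixed $T$. Invoking that theorem would also import Assumption \ref{Ag} and $p>l$, which are not hypotheses of this lemma. Neither Theorem \ref{th2.1} nor Assumption \ref{A4.1} plays any role in the Markov property. Drop that justification; the argument is otherwise complete.
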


\begin{proof}	
By Remark \ref{remark-steps}, the Brownian increment $\Delta W_n$ is $\mathcal{F}_{t_n}$-conditionally normally distributed. Define $\xi_{n+1} = \Delta W_n /\sqrt{\delta_n}$. Then, for every bounded Borel measurable function $\psi : \mathbb{R}^m \to \mathbb{R}$,
\begin{equation*}
	\mathbb{E}\left( {\psi ({\xi _{n + 1}})|{\mathcal{F}_{{t_n}}}} \right) = \int_{{\mathbb{R}^m}} {\psi (z){\gamma _m}(dz)} ,\quad \text{a.s.}
\end{equation*}
where $\gamma_m$ denotes the standard Gaussian probability measure $N(0, I_m)$ on $\mathbb{R}^m$. Hence, ${\xi _{n + 1}} \sim N(0,{I_m})$ and $\xi_{n+1}$ is independent of $\mathcal{F}_{t_n}$. 
Define 
\begin{equation*}
	\varphi (x,z) = x + f(x)\delta (x) + g(x)\sqrt {\delta (x)} z, \quad x \in \mathbb{R}^d, \quad z \in \mathbb{R}^m.
\end{equation*}
By the local Lipschitz condition \eqref{llc} and Assumption \ref{A-stepsize}, the mapping $\varphi:\mathbb{R}^d \times \mathbb{R}^m \to \mathbb{R}^d$ is jointly continuous. Then,
for every $n \ge 0$ and $A \in \mathcal{B}(\mathbb{R}^d)$,
	\begin{equation}  \label{op1}
		\begin{split}
			&\mathbb{P}\left( {{Y_{{t_{n + 1}}}} \in A|{\mathcal{F}_{{t_n}}}} \right) \\
			=& \mathbb{P}\left( {\varphi ({Y_{{t_n}}},{\xi _{n + 1}}) \in A|{\mathcal{F}_{{t_n}}}} \right) = \mathbb{E}\left[ {{\mathcal{I}_A}(\varphi ({Y_{{t_n}}},{\xi _{n + 1}}))|{\mathcal{F}_{{t_n}}}} \right] \\
			=& \mathbb{E}\left[ {{\mathcal{I}_A}(\varphi (x,{\xi _{n + 1}}))} \right]{|_{x = {Y_{{t_n}}}}} = \mathbb{P}\left( {\varphi (x,{\xi _{n + 1}}) \in A} \right){|_{x = {Y_{{t_n}}}}} = \mathbb{P}\left( {\varphi ({Y_{{t_n}}},{\xi _{n + 1}}) \in A|{Y_{{t_n}}}} \right) \\
			=& \mathbb{P}\left( {{Y_{{t_{n + 1}}}} \in A|{Y_{{t_n}}}} \right) ,
		\end{split}
	\end{equation}
which proves the Markov property. For any $x \in \mathbb{R}^d$ and $A \in \mathcal{B}(\mathbb{R}^d)$, since $\xi_{n+1}$ is independent of $\mathcal{F}_{t_n}$, we have
\begin{equation}  \label{pq9}
	\mathbb{P}\left( {{Y_{{t_{n + 1}}}} \in A|{Y_{{t_n}}} = x } \right) = \mathbb{P}\left( {\varphi ({Y_{{t_n}}},{\xi _{n + 1}}) \in A|{Y_{{t_n}}} = x } \right) = \mathbb{P}\left( {\varphi (x ,{\xi _{n + 1}}) \in A} \right).
\end{equation}
Similarly,
\begin{equation}  \label{pq10}
	\mathbb{P}\left( {{Y_{{t_1}}} \in A|{Y_0} = x } \right) = \mathbb{P}\left( {\varphi ({Y_{{0}}},{\xi _1}) \in A|{Y_0} = x } \right) = \mathbb{P}\left( {\varphi (x ,{\xi _1}) \in A} \right) .
\end{equation}
Since $\xi_{n + 1}$ and $\xi_1$ have the same distribution, from \eqref{pq9} and \eqref{pq10} one has
\begin{equation*}
	\mathbb{P}\left( {{Y_{{t_{n + 1}}}} \in A|{Y_{{t_n}}} = x } \right) = \mathbb{P}\left( {{Y_{{t_1}}} \in A|{Y_0} = x } \right).
\end{equation*}
This proves that $\{Y_{t_n}\}_{n\ge 0}$ is a time-homogeneous Markov chain.
\end{proof}

\begin{lemma} \label{feller} 
	Let \eqref{llc}, Assumptions \ref{A-stepsize} and \ref{A4.1} hold. The numerical solution process ${\{ {Y_{{t_n}}}\} _{n \ge 0}}$ generated by adaptive time-stepping EM scheme \eqref{scheme} is Feller.
\end{lemma}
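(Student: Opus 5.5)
The Feller property asks that for every bounded continuous $\psi:\mathbb{R}^d\to\mathbb{R}$, the function $x\mapsto P\psi(x):=\mathbb{E}[\psi(Y_{t_1})\mid Y_0=x]$ is continuous. I would reduce this to the representation already obtained in the proof of Lemma \ref{markov}.

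\textbf{Step 1: write the kernel as a Gaussian integral.} From \eqref{pq10}, the one-step transition kernel is
\begin{equation*}
P\psi(x)=\mathbb{E}\,\psi(\varphi(x,\xi_1))=\int_{\mathbb{R}^m}\psi\bigl(x+f(x)\delta(x)+g(x)\sqrt{\delta(x)}\,z\bigr)\,\gamma_m(dz),
\end{equation*}
with $\xi_1\sim N(0,I_m)$ independent of the starting point.

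\textbf{Step 2: continuity of $\varphi$ in $x$.} The drift $f$ and diffusion $g$ are continuous by \eqref{llc}, and $\delta(\cdot)$ is continuous and strictly positive by Assumption \ref{A-stepsize}. Hence $x\mapsto\sqrt{\delta(x)}$ is continuous, and for each fixed $z$ the map $x\mapsto\varphi(x,z)$ is continuous. (Joint continuity was already noted in Lemma \ref{markov}.)

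\textbf{Step 3: pass to the limit.} Take $x_k\to x$. For every $z$, $\psi(\varphi(x_k,z))\to\psi(\varphi(x,z))$ because $\psi$ is continuous. The integrands are bounded by $\|\psi\|_\infty$, which is integrable against the probability measure $\gamma_m$. Dominated convergence then gives $P\psi(x_k)\to P\psi(x)$.

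\textbf{Extensions.} If the $n$-step kernel is also wanted, $P^n\psi=P(P^{n-1}\psi)$ by Chapman--Kolmogorov, and $P^{n-1}\psi$ is again bounded and continuous, so induction yields the Feller property of $P^n$ for every $n$. Assumption \ref{A4.1} plays no role here; it only ensures, via Theorem \ref{th2.1}, that the chain is well defined for all $n$.

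There is no real obstacle. The only care needed is that the argument relies on continuity of $\delta$, not on any lower bound for it, so positivity of $\delta$ is required only to make $\sqrt{\delta}$ well defined.
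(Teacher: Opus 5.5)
Your proposal is correct and follows the paper's proof essentially step for step. You represent the one-step kernel through $\varphi(x,z)=x+f(x)\delta(x)+g(x)\sqrt{\delta(x)}\,z$ with an independent standard Gaussian, use continuity of $\varphi$ from \eqref{llc} and continuity of $\delta$, and conclude by dominated convergence with the bound $\|\psi\|_\infty$. One minor correction to a side remark, which does not affect the argument: Theorem \ref{th2.1} is not needed for the chain to be well defined, since the recursion \eqref{scheme} produces $Y_{t_n}$ for every $n$ as soon as $\delta>0$.
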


\begin{proof}
Define $\xi_{n+1} = \Delta W_n /\sqrt{\delta_n}$. By Remark \ref{remark-steps}, ${\xi _{n + 1}} \sim N(0,{I_m})$ and $\xi_{n+1}$ is independent of $\mathcal{F}_{t_n}$. Define 
\begin{equation*}
	\varphi (x,z) = x + f(x)\delta (x) + g(x)\sqrt {\delta (x)} z, \quad x \in \mathbb{R}^d, \quad z \in \mathbb{R}^m.
\end{equation*}
By the local Lipschitz condition \eqref{llc} and Assumption \ref{A-stepsize}, the mapping $\varphi:\mathbb{R}^d \times \mathbb{R}^m \to \mathbb{R}^d$ is jointly continuous. Let
$\phi \in \mathcal{C}_b(\mathbb{R}^d)$, by Lemma \ref{markov}, for every $x \in \mathbb{R}^d$,
\begin{equation*}
	\begin{split}
		\mathbb{E}\left[ {\phi  ({Y_{{t_{n + 1}}}})|{Y_{{t_n}}} = x} \right] = \mathbb{E}\left[ {\phi  (\varphi ({Y_{{t_n}}},{\xi _{n + 1}}))|{Y_{{t_n}}} = x} \right] = \mathbb{E}\left[ {\phi  (\varphi (x,{\xi _{n + 1}}))} \right] .
	\end{split}
\end{equation*} 
Let ${\{ {x_k}\} _{k \ge 1}} \subset {\mathbb{R}^d} $ satisfy $x_k \to x$ as $k \to \infty$.
By the joint continuity of $\varphi$ and the continuity of $\phi$,
	\begin{equation*}
		\mathop {\lim }\limits_{x_k \to x} \phi  (\varphi (x_k,{\xi _{n + 1}})) = \phi  (\varphi (x,{\xi _{n + 1}})).
	\end{equation*}
Furthermore, since $\phi  \in {\mathcal{C}_b}({\mathbb{R}^d})$, there exists a constant $M > 0$ such that
\begin{equation*}
	\left| {\phi  (\varphi (x_k,{\xi _{n + 1}}))} \right| \le M, \quad k \ge 1.
\end{equation*}
Hence, by Lebesgue's dominated convergence theorem (see e.g., \cite[Theorem 3, p. 187]{shiryaev2019probability}),
\begin{equation*}
	\mathop {\lim }\limits_{k \to \infty} \mathbb{E}\left[ {\phi  (\varphi (x_k,{\xi _{n + 1}}))} \right] = \mathbb{E}\left[ {\mathop {\lim }\limits_{k \to \infty} \phi  (\varphi (x_k,{\xi _{n + 1}}))} \right] =  \mathbb{E}\left[ {\phi  (\varphi (x,{\xi _{n + 1}}))} \right],
\end{equation*}
Therefore, the mapping 
	\begin{equation*}
		x \mapsto \mathbb{E}\left[ {\phi ({Y_{{t_{n + 1}}}})|{Y_{{t_n}}} = x} \right]
	\end{equation*}
	is continuous on $\mathbb{R}^d$.
	Since $\phi\in\mathcal C_b(\mathbb R^d)$ is arbitrary, the numerical
	solution process $\{Y_{t_n}\}_{n\ge0}$ is Feller.
	The proof is therefore complete.
\end{proof}

\subsection{Existence and uniqueness of the numerical invariant measure}

We first recall an ergodicity theorem for a time-homogeneous Markov chain $\Psi  = {\{ {\Psi _n}\} _{n \ge 0}}$.

\begin{lemma} \label{meyn14} (\cite[Theorem 14.0.1]{meyn2009markov})
Suppose that $\Psi  = {\{ {\Psi _n}\} _{n \ge 0}}$ is a $\psi$-irreducible and aperiodic Markov chain on $(E, \mathcal{B}(E))$. Assume that there exist a petite set $K \in \mathcal{B}(E)$, a constant $b < \infty$, and a non-negative function $V:E \to [0, \infty)$ such that
\begin{equation*}  
	\int_E {V(y){{P}_\Psi }(x,dy)} -V(x) \le -1 + b{\mathcal{I}_K}(x), \quad x \in E.
\end{equation*}
Then the chain is positive recurrent and admits an invariant measure $\pi$. Moreover,
\begin{equation*}
	\mathop {\lim }\limits_{n \to \infty }  {\left\| {{P_\Psi^n}(x, \cdot ) - \pi} \right\|_{{\rm TV}}} = 0, \quad  x\in E.
\end{equation*}

\end{lemma}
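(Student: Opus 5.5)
This result is quoted from \cite[Theorem 14.0.1]{meyn2009markov}, so within the paper one simply cites it. To reconstruct the argument, I would follow the standard Meyn--Tweedie route: drift condition, then positive Harris recurrence, then an ergodic theorem for the aperiodic chain.

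\textbf{Step 1: return times have finite mean.} Let $\tau_K=\inf\{n\ge1:\Psi_n\in K\}$. The drift condition says $V(\Psi_n)+n$ is a nonnegative supermartingale up to the first hitting of $K$. Apply the optional stopping theorem to the bounded times $\tau_K\wedge m$ and let $m\to\infty$ using Fatou's lemma. This gives the Dynkin/comparison bound
\begin{equation*}
\mathbb{E}_x\tau_K\le V(x)+b\,\mathcal{I}_K(x).
\end{equation*}
In particular $\sup_{x\in K}\mathbb{E}_x\tau_K\le \sup_K V+b$, provided $V$ is bounded on $K$. If it is not, I would pass to a sublevel set $K\cap\{V\le r\}$, which is still petite, and work with that set instead.

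\textbf{Step 2: a positive recurrent atom.} Since $K$ is petite and the chain is $\psi$-irreducible, I would use Nummelin splitting to build an atom $\alpha$ for the split chain. Finite mean return times to $K$ transfer to $\mathbb{E}_\alpha\tau_\alpha<\infty$. The invariant measure is then given by the cycle formula
\begin{equation*}
\pi(A)=\frac{\mathbb{E}_\alpha\sum_{n=1}^{\tau_\alpha}\mathcal{I}_A(\Psi_n)}{\mathbb{E}_\alpha\tau_\alpha}.
\end{equation*}
This $\pi$ is a probability measure; uniqueness follows from irreducibility. Harris recurrence follows because $\mathbb{P}_x(\tau_K<\infty)=1$ for every $x$ with $V(x)<\infty$, and $V$ is finite everywhere.

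\textbf{Step 3: convergence in total variation.} Using aperiodicity, the renewal sequence $u(n)=\mathbb{P}_\alpha(\Psi_n\in\alpha)$ is aperiodic with finite mean, so the renewal theorem gives $u(n)\to 1/\mathbb{E}_\alpha\tau_\alpha$. Decompose $P^n(x,\cdot)$ by first entrance to $\alpha$ and last exit from $\alpha$. Together with $\mathbb{P}_x(\tau_\alpha<\infty)=1$, this yields $\|P^n(x,\cdot)-\pi\|_{\rm TV}\to0$ for every $x$. An alternative would be a coupling argument of two copies that meet at $\alpha$.

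\textbf{Main obstacle.} The delicate step is transferring the petite-set drift to an atom for the split chain while keeping the finite mean return time. This relies on the minorization structure that petiteness and aperiodicity provide, and I would take it directly from the Meyn--Tweedie theory rather than reprove it.
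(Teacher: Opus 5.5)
The paper gives no proof of this lemma. It is quoted from \cite[Theorem 14.0.1]{meyn2009markov} with $f\equiv 1$. That theorem asserts convergence on $S_V=\{V<\infty\}$, which here is all of $E$ because $V$ is finite-valued. Your reconstruction follows the Meyn--Tweedie route itself: the comparison bound $\mathbb{E}_x\tau_K\le V(x)+b\,\mathcal{I}_K(x)$, then Harris recurrence and positivity via a split-chain atom, then the first-entrance/last-exit decomposition with the renewal theorem. In substance it matches the cited source, and deferring the splitting and skeleton details to them is reasonable.

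The one step that fails as written is your aside for $V$ unbounded on $K$. You cannot simply rerun Step~1 with $K_r:=K\cap\{V\le r\}$. The drift inequality carries $\mathcal{I}_K$, not $\mathcal{I}_{K_r}$. For $x\in K\setminus K_r$ you only know $P_\Psi V(x)\le V(x)-1+b$, so nothing bounds $\mathbb{E}_x\tau_{K_r}$. Petiteness of $K_r$ alone does not supply this.

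Here is one way to repair it.
\begin{itemize}
\item The process $V(\Psi_n)+n-b\,\#\{0\le j<n:\Psi_j\in K\}$ is a supermartingale. Hence $\mathbb{E}_x\tau_{K_r}\le V(x)+b\,\mathbb{E}_x N_r$, where $N_r$ counts visits to $K$ before $\tau_{K_r}$.
\item Since $\mathbb{P}_x(\tau_K<\infty)=1$ for every $x$, the set $K$ is accessible. So $K_r$ is accessible for $r$ large.
\item Petiteness of $K$ together with accessibility of $K_r$ gives $M$ and $\delta>0$ with $\inf_{x\in K}\mathbb{P}_x(\tau_{K_r}\le M)\ge\delta$.
\item A geometric-trials argument over blocks of length $M$ then gives $\mathbb{E}_x N_r\le M/\delta$.
\end{itemize}
Thus $\sup_{x\in K_r}\mathbb{E}_x\tau_{K_r}\le r+bM/\delta$, which is what your Step~2 needs. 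In the paper's application this case never arises: the petite set in the proof of Proposition~\ref{th-a2} is a sublevel set of $V_p$ itself, so the Lyapunov function is bounded on it.
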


Before establishing the existence and uniqueness of the numerical invariant measure of the adaptive time-stepping EM scheme \eqref{scheme}, we require an assumption on how quickly the adaptive timestep function $\delta(x)$ can approach zero as $|x| \to \infty$.

\begin{assumption}  \label{A-step}
There exists a constant $\ell > 0$ such that the adaptive timestep function satisfies the inequality
\begin{equation*}
	\delta (x) \ge \ell {\left( {1 + {{\left| x \right|}^2}} \right)^{ - p/2}}, \quad x \in \mathbb{R}^d,
\end{equation*}
where $p$ is given in Assumption \ref{A4.1}.
\end{assumption}

Note that Assumption \ref{A-step} is a weak condition that provides a polynomial lower bound for the adaptive timestep. Similar conditions can also be seen in \cite[Theorem 11]{lemaire2007an} and \cite[Assumption 5]{fang2020adaptive}.

\begin{proposition} \label{th-a2} 
Let \eqref{llc}, Assumptions \ref{A-stepsize}, \ref{A4.1}, \ref{A-stepbound} and \ref{A-step} hold. Assume further that the diffusion coefficient $g(\cdot)$ is uniformly nondegenerate, i.e., there exists a constant $\lambda > 0$ such that
\begin{equation*}
	g(x)g^{\rm T}(x) \ge \lambda I_{d}, \quad x \in \mathbb{R}^{d}.
\end{equation*}
Then the adaptive time-stepping EM numerical solution ${\{ {Y_{{t_n}}}\}  _{n \ge 0}}$ defines a $\mu^{Leb}$-irreducible, aperiodic Markov chain. Moreover, the chain is positive recurrent and admits a unique invariant probability measure $\mu^\Delta$. Furthermore,
\begin{equation*}
	\mathop {\lim }\limits_{n \to \infty }{\left\| {{P_Y^n}(x_0, \cdot ) - \mu^\Delta} \right\|_{{\rm TV}}} = 0.
\end{equation*}
for every $x_0 \in \mathbb{R}^d$.
\end{proposition}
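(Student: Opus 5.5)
We will verify the hypotheses of Lemma \ref{meyn14} for the chain $\{Y_{t_n}\}_{n\ge0}$, which is a time-homogeneous Feller Markov chain by Lemmas \ref{markov} and \ref{feller}. Its one-step kernel is $P_Y(x,A)=\mathbb{P}(\varphi(x,\xi)\in A)$ with $\xi\sim N(0,I_m)$ and $\varphi(x,z)=x+f(x)\delta(x)+g(x)\sqrt{\delta(x)}z$.

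\emph{Irreducibility and aperiodicity.} The covariance of $\varphi(x,\xi)$ is $\delta(x)g(x)g^{\rm T}(x)\ge \lambda\delta(x)I_d$, which is positive definite because $\delta(x)>0$. Hence $P_Y(x,\cdot)$ is a nondegenerate Gaussian law $N(x+f(x)\delta(x),\,\delta(x)gg^{\rm T}(x))$. Its density is continuous in $(x,y)$, since $f,g,\delta$ are continuous, and it is strictly positive on all of $\mathbb{R}^d$. Consequently $P_Y(x,A)>0$ for every $x$ and every $A$ with $\mu^{Leb}(A)>0$. This gives $\mu^{Leb}$-irreducibility, and in fact strong aperiodicity, because a single step already charges every set of positive Lebesgue measure.

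\emph{Petite sets.} On any compact set $B$ the density $p(x,y)$ is bounded below by some $c_B>0$ uniformly for $x\in B$ and $y\in B$. Then $P_Y(x,\cdot)\ge c_B\,\mu^{Leb}(\cdot\cap B)$ for all $x\in B$, so every compact set is small and hence petite. (Alternatively, one can cite that Feller chains with $\mu^{Leb}$-irreducibility whose support has nonempty interior are T-chains, so compact sets are petite.)

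\emph{Drift condition.} This is the main step. From \eqref{bd9} in the proof of Theorem \ref{th-bounded} we have
\[
\int V_p(y)P_Y(x,dy)-V_p(x)\le -\tfrac{pK_6}{2}\delta(x)V_p(x)+C\delta(x).
\]
The difficulty is that $\delta(x)\to0$ as $|x|\to\infty$, so $V_p$ alone gives only a decrease of size $\delta(x)V_p(x)$, which could be small. Assumption \ref{A-step} is exactly what closes this gap: $\delta(x)V_p(x)\ge \ell$. Set $V=\frac{4}{pK_6\ell}V_p$ and take $R$ large enough that $C\delta(x)\le C\delta_{\max}$ is dominated by half the negative term outside $B_R$; this uses $V_p\to\infty$ together with $\delta(x)V_p(x)\ge\ell$. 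Then for $|x|>R$,
\[
\int V\,dP_Y(x,\cdot)-V(x)\le -\tfrac{4}{pK_6\ell}\cdot\tfrac{pK_6}{4}\delta(x)V_p(x)\le -1 .
\]
On $K=\overline{B_R}$ the increment is bounded by $\frac{4}{pK_6\ell}C\delta_{\max}$, which we take as $b-1$. Thus $PV-V\le -1+b\mathcal{I}_K$ holds with $K$ petite.

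Lemma \ref{meyn14} then yields positive recurrence, an invariant probability measure $\mu^\Delta$, and convergence in total variation from every $x_0$. For uniqueness: a positive Harris (or $\psi$-irreducible positive recurrent) chain has a unique invariant probability measure, and any second invariant measure would contradict the TV convergence applied to initial laws distributed according to it.

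The one point that needs care is verifying that the constant $C$ in \eqref{bd9} is uniform in $x$. This holds because it came from Assumptions \ref{A-stepsize}, \ref{A4.1} and \ref{A-stepbound} only.
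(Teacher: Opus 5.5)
Your proposal is correct and follows essentially the paper's route. You use the strictly positive Gaussian one-step density for $\mu^{Leb}$-irreducibility and aperiodicity, and show that compact sets are petite. You then combine the drift \eqref{bd9} with $\delta(x)V_p(x)\ge\ell$ to obtain $PV-V\le -1+b\mathcal{I}_K$ for Lemma \ref{meyn14}, and deduce uniqueness from the TV convergence. The differences are cosmetic: you get petiteness and strong aperiodicity from a direct minorization on compacts, where the paper cites the T-chain result and argues by cyclic-class contradiction. Also, the choice of $R$ only requires $V_p(x)\ge 4C/(pK_6)$ after dividing out $\delta(x)$, not any comparison with $C\delta_{\max}$.
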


\begin{proof}
To establish the ergodicity of the adaptive time-stepping EM scheme \eqref{scheme}, we first establish irreducibility and aperiodicity, and then verify the Foster-Lyapunov drift condition required by Lemma \ref{meyn14}. Since the proof is rather technical, we divide it into two steps.

\textbf{Step 1}: 
We first show that Markov chain ${\{ {Y_{{t_n}}}\} _{n \ge 0}}$ is ${\mu ^{Leb}}$-irreducible and aperiodic, where ${\mu ^{Leb}}$ denotes the Lebesgue measure on $\mathbb{R}^d$. Fix $x \in \mathbb{R}^d$ and suppose that $Y_0 = x$. It follows from scheme \eqref{scheme} that
\begin{equation*}
	{Y_{{t_{ 1}}}} = x + f(x)\delta (x) + g(x)\left( {W({t_{ 1}}) - W({t_0})} \right).
\end{equation*}
By Remark \ref{remark-steps}, the Brownian increment $W(t_{1}) - W(t_0)$ has the same distribution as $\sqrt {\delta (x)} {\xi _{ 1}}$, where ${\xi _1} \sim N(0,{I_{m}})$ is a standard Gaussian random vector independent of $\mathcal{F}_{t_0}$. 
Hence, we have ${Y_{{t_1}}} \sim N\left( {m(x),\Sigma (x)} \right)$,
where $ m(x) = x+f(x)\delta(x)$ and $\Sigma(x) = \delta(x)g(x)g^{\rm T}(x)$. 
Since $g(\cdot)$ is uniformly nondegenerate and $\delta(\cdot) > 0$, $\Sigma (x)  \ge \lambda \delta (x){I_m} > 0$. Thus, $\Sigma(x)$ is positive definite for every $x \in \mathbb{R}^d$. Hence,
the one-step transition kernel $P_Y(x, \cdot)$ is a nondegenerate Gaussian measure on $\mathbb{R}^d$, and admits the strictly positive density 
\begin{equation*}
	{p_{m(x),\Sigma (x)}}(y) = \frac{{\exp \left( { - \frac{1}{2}{{(y - m(x))}^{\rm{T}}}\Sigma {{(x)}^{ - 1}}(y - m(x))} \right)}}{{\sqrt {{{(2\pi )}^d}\det (\Sigma (x))} }}, \quad y \in \mathbb{R}^d,
\end{equation*}
with respect to the Lebesgue measure $\mu^{Leb}$. Therefore, $P_Y(x,\cdot)$ is equivalent to the Lebesgue measure $\mu^{Leb}$ on $\mathbb{R}^d$ \cite[p. 87]{liu2025numerical}. In particular, for every $A \in \mathcal{B}(\mathbb{R}^d)$ satisfying $\mu^{Leb}(A) > 0$,
\begin{equation*}
	{P_Y}(x,A) = \int_A {{p_{m(x),\Sigma (x)}}(y){\mu ^{Leb}}(dy) > 0} , \quad x\in \mathbb{R}^d.
\end{equation*}
Consequently, using Lemma \ref{markov}, the Markov chain ${\{ {Y_{{t_n}}}\} _{n \ge 0}}$ is ${\mu ^{Leb}}$-irreducible (see e.g., \cite[Section 4.2.1, p. 82]{meyn2009markov}).

Next, we prove that Markov chain ${\{ {Y_{{t_n}}}\} _{n \ge 0}}$ is aperiodic. Suppose, to the contrary, that it has period $j \ge 2$. By \cite[Theorem 5.4.4, p. 113]{meyn2009markov}, there exist pairwise disjoint sets $D_1, ..., D_j \in \mathcal{B}(\mathbb{R}^d)$ such that
\begin{equation*}
	\begin{split}
		{{P}_Y}(x,{D_{i + 1}}) = 1, \quad x \in D_i, \quad i=1,2,...,j-1, \quad \text{and} \quad
		{{P}_Y}(x,{D_1}) = 1,\quad  x \in D_j,
	\end{split} 
\end{equation*}
and ${\mu ^{Leb}}\left( {{\mathbb{R}^d}{\rm{\backslash }}\bigcup\nolimits_{i = 1}^j {{D_i}} } \right) = 0$.
On the one hand, there exists ${i_0} \in \{ 1,2,...,j\} $ such that $\mu^{Leb}(D_{i_0}) > 0$. 
By the strict positivity of the one-step transition density, we get
\begin{equation}	\label{pq1}
	{P}_Y(x,{D_{i_0}}) > 0 , \quad x \in D_{i_0}.
\end{equation}
On the other hand, if ${i_0} \in \{ 1,2,...,j-1\} $, then 
\begin{equation*}
	0 \le {{P}_Y}(x,{D_{{i_0}}}) \le {{P}_Y}(x,D_{{i_0} + 1}^c) = 1 - {{P}_Y}(x,{D_{{i_0} + 1}}) = 0, \quad x \in D_{i_0}.
\end{equation*}
If $i_0 = j$, then
\begin{equation*}
	0 \le {{P}_Y}(x,{D_{{i_0}}}) \le {{P}_Y}(x,D_1^c) = 1 - {{P}_Y}(x,{D_1}) = 0, \quad x \in D_{i_0}.
\end{equation*}
Thus, ${{P}_Y}(x,{D_{{i_0}}}) = 0$ holds for all $x \in D_{i_0}$, which contradicts \eqref{pq1}. Therefore, ${\{ {Y_{{t_n}}}\} _{n \ge 0}}$ is aperiodic.

\textbf{Step 2}:
It follows from \eqref{bd9} with $n = 1$ that
\begin{equation*}  
	\begin{split}
		\mathbb{E}\left[ {{\left( {1 + {{\left| {{Y_{{t_{ 1}}}}} \right|}^2}} \right)^{p/2}}|{{\cal F}_{{t_0}}}} \right] 
		\le \left( {1 - \frac{{p{K_6}}}{2}{\delta _0}} \right){\left( {1 + {{\left| {{Y_{{t_0}}}} \right|}^2}} \right)^{p/2}} + C{\delta _0}  \quad \text{a.s.}
	\end{split}
\end{equation*} 
Let $V_p(x) = {\left( {1 + {{\left| x \right|}^2}} \right)^{p/2}}$ for any $x \in \mathbb{R}^d$. By the Markov property of ${\{ {Y_{{t_n}}}\} _{n \ge 0}}$, 
\begin{equation*}
	\begin{split}
		\mathbb{E}\left[ {V_p({Y_{{t_{1}}}})|{Y_{{t_0}}} = x} \right] \le& \left( {1 - \frac{pK_6}{2} \delta (x)} \right)V_p(x) + C\delta (x) \\
		=& \left( {1 - \frac{{pK_6 }}{4}}\delta (x) \right)V_p(x) + \delta (x)\left( {C - \frac{pK_6 }{4}V_p(x)} \right), \quad  x \in \mathbb{R}^d.
	\end{split}
\end{equation*}
If $V_p(x) > 4C/(pK_6) $, using Assumption \ref{A-step} we have
\begin{equation} 	\label{pq2}
	\mathbb{E}\left[ {V_p({Y_{{t_{ 1}}}})|{Y_{{t_0}}} = x} \right] \le \left( {1 - \frac{{pK_6 }}{4}}\delta (x) \right)V_p(x) \le V_p (x) - \frac{pK_6\ell}{4}.
\end{equation}
On the other hand, if $V_p(x) \le 4C/(pK_6) $, then Assumptions \ref{A-stepbound} and \ref{A-step} imply
\begin{equation}	\label{pq3}
	\mathbb{E}\left[ {V_p({Y_{{t_{ 1}}}})|{Y_{{t_0}}} = x} \right] \le \left( {1 - \frac{{pK_6 }}{4}} \delta (x) \right)V_p(x) + C{\delta _{\max }} \le V_p (x) - \frac{pK_6\ell}{4} + C\delta_{\max}.
\end{equation}
Combining \eqref{pq2} and \eqref{pq3} we obtain
\begin{equation}  \label{pq5}
	\mathbb{E}\left[ {V_p({Y_{{t_{ 1}}}})|{Y_{{t_0}}} = x} \right] \le V_p(x) - \frac{pK_6\ell}{4} + C{\delta _{\max }}{\mathcal{I}_K}(x), \quad x\in \mathbb{R}^d.
\end{equation}
where $K = \left\{ {x \in {\mathbb{R}^d}:V_p(x) \le \max \{ 4C/(pK_6), 1\} } \right\}$. Since $V_p(\cdot)$ is continuous and radially unbounded, the set $K$ is compact in $\mathbb{R}^d$.
Moreover, the chain ${\{ {Y_{{t_n}}}\} _{n \ge 0}}$ is Feller  and $\mu^{Leb}$-irreducible, and $supp \mu^{Leb} = \mathbb{R}^d$ has non-empty interior; hence, every compact set is petite \cite[Proposition 6.2.8, p. 133]{meyn2009markov}. 
In particular, $K$ is a petite set.
Let 
\begin{equation*}
	c_1 := {pK_6\ell}/{4}, \quad {{\tilde V}_p}(x) := \frac{{{V_p}(x)}}{c_1 }, \quad \tilde b := \frac{{C{\delta _{\max }}}}{c_1 }.
\end{equation*}
Then, \eqref{pq5} becomes
\begin{equation*}
	\mathbb{E}\left[ {{{\tilde V}_p}({Y_{{t_{ 1}}}})|{Y_{{t_0}}} = x} \right] \le {{\tilde V}_p}(x) - 1 + \tilde b{{\cal I}_K}(x) .
\end{equation*}
By Lemma \ref{meyn14}, the Markov chain ${\{ {Y_{{t_n}}}\} _{n \ge 0}}$ is positive recurrent and admits an invariant probability measure $\mu^\Delta$. Moreover, for every initial value $x \in \mathbb{R}^d$,
\begin{equation}  \label{pq13}
	{\left\| {{P_Y^n}(x, \cdot ) - {\mu ^\Delta }} \right\|_{{\rm TV}}} \to 0, \quad n \to \infty.
\end{equation}
Finally, $\mu^\Delta$ is unique. Indeed, if $\nu^{\Delta}$ is another invariant probability measure of $P_Y$, then for any $n \ge 1$ and $A \in \mathcal{B}(\mathbb{R}^d)$,
\begin{equation*}
\begin{split}
	\left| {\nu^{\Delta} P_Y^n(A) - {\mu ^\Delta }(A)} \right| =& \left| {\int_{{\mathbb{R}^d}} {P_Y^n(x,A)\nu^{\Delta} (dx)}  - \int_{{\mathbb{R}^d}} {{\mu ^\Delta }(A)\nu^{\Delta} (dx)} } \right| \\
	\le& \int_{{\mathbb{R}^d}} {\left| {P_Y^n(x,A) - {\mu ^\Delta }(A)} \right|\nu^{\Delta} (dx)} \\
	\le& \int_{{\mathbb{R}^d}} {\mathop {\sup }\limits_{B \in \mathcal{B}({\mathbb{R}^d})} \left| {P_Y^n(x,B) - {\mu ^\Delta }(B)} \right|\nu^{\Delta} (dx)} \\
	\le& \int_{{\mathbb{R}^d}} {{{\left\| {P_Y^n(x, \cdot ) - {\mu ^\Delta }} \right\|}_{{\rm TV}}}\nu^{\Delta} (dx)} .
\end{split}
\end{equation*}
Taking the supresum on both sides, by \eqref{pq13} and ${{{\left\| {P_Y^n(x, \cdot ) - {\mu ^\Delta }} \right\|}_{{\rm TV}}}} \le 2$, using the dominated convergence theorem we have
\begin{equation*}
\begin{split}
	{\left\| {\nu^{\Delta}  - {\mu ^\Delta }} \right\|_{{\rm TV}}} = {\left\| {\nu^{\Delta} P_Y^n - {\mu ^\Delta }} \right\|_{{\rm TV}}} \le \int_{{\mathbb{R}^d}} {{{\left\| {P_Y^n(x, \cdot ) - {\mu ^\Delta }} \right\|}_{{\rm TV}}}\nu^{\Delta} (dx)}  \to 0.
\end{split}
\end{equation*}
Hence $\nu^{\Delta}=\mu^\Delta$, proving the uniqueness of the invariant probability measure.
The proof is therefore complete.
\end{proof}

The following theorem shows the ergodicity of adaptive time-stepping scheme \eqref{scheme} in the $L^q$-Wasserstein distance with $q \in [1,p]$.

\begin{theorem}   \label{th-a4}
Assume that all the conditions of Proposition \ref{th-a2} hold. Let $\mu^\Delta$ be the unique invariant probability measure of the adaptive time-stepping scheme \eqref{scheme}. Then, for any $q \in [1,p)$, where $p$ is given in Assumption \ref{A4.1}, 
\begin{equation*}
	\mathop {\lim }\limits_{n \to \infty } {{\cal W}_q}({\cal L}({Y_{{t_n}}}),{\mu ^\Delta }) = 0,
\end{equation*}
where $\mathcal{L}({Y_{{t_n}}})$ denotes the law of $Y_{t_n}$.
\end{theorem}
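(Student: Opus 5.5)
The plan is to upgrade the total variation convergence of Proposition~\ref{th-a2} to convergence in $\mathcal{W}_q$, $q<p$, by combining it with uniform $p$th moment bounds. Total variation convergence plus uniform integrability of $|x|^q$ gives Wasserstein convergence. There are three ingredients.

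\textbf{Step 1 (uniform moments of the chain).} From \eqref{bd9} we have
\begin{equation*}
\mathbb{E}[V_p(Y_{t_{n+1}})\mid\mathcal F_{t_n}]\le (1-\lambda\delta_n)V_p(Y_{t_n})+C\delta_n .
\end{equation*}
If $V_p(Y_{t_n})\le C/\lambda$, the right-hand side is at most $C/\lambda+C\delta_{\max}$; otherwise it is at most $V_p(Y_{t_n})$. Hence
\begin{equation*}
\mathbb{E}V_p(Y_{t_{n+1}})\le \max\{\mathbb{E}V_p(Y_{t_n}),\,C/\lambda+C\delta_{\max}\}+\text{(cross term)} .
\end{equation*}
Cleaner is the pointwise bound $(1-\lambda\delta)v+C\delta\le \max\{v,\,C/\lambda\}$. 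This gives $\mathbb{E}[V_p(Y_{t_{n+1}})\mid\mathcal F_{t_n}]\le \max\{V_p(Y_{t_n}),C/\lambda\}$. By induction, $\sup_n\mathbb{E}V_p(Y_{t_n})\le \max\{V_p(x_0),C/\lambda\}$, and the same holds for $\mathbb{E}\max\{V_p(Y_{t_n}),C/\lambda\}$. Alternatively one can invoke Theorem~\ref{th-bounded} at the discrete times, since the proof there bounds $\mathbb{E}V_p(Y_{\underline t})$. Either way, $\sup_n\mathbb{E}|Y_{t_n}|^p\le C$.

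\textbf{Step 2 (moments of $\mu^\Delta$).} Apply Fatou's lemma to the Cesàro or marginal laws. Since $\mathcal L(Y_{t_n})\to\mu^\Delta$ in total variation, hence weakly, and $|x|^p\wedge N$ is bounded continuous, we get
\begin{equation*}
\mu^\Delta(|\cdot|^p\wedge N)=\lim_n \mathbb{E}(|Y_{t_n}|^p\wedge N)\le C .
\end{equation*}
Letting $N\to\infty$ gives $\mu^\Delta\in\mathcal P_p$. Here we use that $\mathcal L(Y_{t_n})=P_Y^n(x_0,\cdot)$, which follows from the time-homogeneous Markov property of Lemma~\ref{markov}.

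\textbf{Step 3 (coupling estimate).} For laws $\mu_n=P^n_Y(x_0,\cdot)$ and $\mu^\Delta$, take the maximal (total variation optimal) coupling $\pi_n$. Under $\pi_n$, $X=Y$ except on a set of probability $\|\mu_n-\mu^\Delta\|_{\rm TV}$. Then by Hölder's inequality with exponents $p/q$ and $p/(p-q)$,
\begin{equation*}
\mathcal W_q^q(\mu_n,\mu^\Delta)\le \int |x-y|^q\mathcal I_{\{x\ne y\}}\,d\pi_n\le \Bigl(2^{p-1}(\mu_n(|\cdot|^p)+\mu^\Delta(|\cdot|^p))\Bigr)^{q/p}\|\mu_n-\mu^\Delta\|_{\rm TV}^{(p-q)/p}.
\end{equation*}
By Steps 1 and 2 the first factor is bounded uniformly in $n$. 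By Proposition~\ref{th-a2} the second factor tends to $0$, which gives the claim for every $q\in[1,p)$. This is exactly where strict inequality $q<p$ is needed.

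The main obstacle is Step 1, namely getting a moment bound uniform in the step index $n$ rather than in continuous time $t$. The bound in Theorem~\ref{th-bounded} is at times $\underline t$, and these do not enumerate all $n$ deterministically. That is why I would use the direct supermartingale-type induction via $\max\{v,C/\lambda\}$, which needs only $\lambda\delta_n\le 1$. This can be ensured by taking $\delta_{\max}$ small, or handled by bounding $(1-\lambda\delta)v$ by $|1-\lambda\delta|v$, which is still at most $v$ if $\lambda\delta_{\max}\le 2$. I expect this smallness or bookkeeping to be the only delicate point.
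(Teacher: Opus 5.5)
Your Steps 2 and 3 match the paper's proof. The paper truncates with $|x|^p\wedge R$, then uses $\mathcal W_q^q\le 2^{q-1}\int|x|^q\,d|\mu_n-\mu^\Delta|$ together with H\"older against the total variation mass, which is equivalent to your maximal-coupling estimate. So everything rests on Step 1, and there your argument has a genuine gap.

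From $\mathbb{E}[V_p(Y_{t_{n+1}})\mid\mathcal F_{t_n}]\le\max\{V_p(Y_{t_n}),C/\lambda\}$ you can only conclude $\mathbb{E}V_p(Y_{t_{n+1}})\le\mathbb{E}\max\{V_p(Y_{t_n}),C/\lambda\}$, and the induction does not close. The process $\max\{V_n,c\}$ is not a supermartingale, since conditional Jensen goes the wrong way. The one-step inequality alone even allows $\mathbb{E}V_n$ to grow linearly. For example, take the chain on $\{0\}\cup\{2^kc:k\ge1\}$ that from every state jumps to $0$ with probability $1/2$, and otherwise moves $0\mapsto 2c$ and $2^kc\mapsto 2^{k+1}c$. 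It satisfies $\mathbb{E}[V_{n+1}\mid V_n]\le\max\{V_n,c\}$, yet $\mathbb{E}V_{n+1}=\mathbb{E}V_n+c/2$ for all $n\ge1$.

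What you discarded is the contraction $-\lambda\delta_nV_n$, and that is the real obstruction; the smallness of $\lambda\delta_{\max}$ you flag is harmless. Because $\delta(x)\to0$ (Assumption \ref{A-stepbound}), this term is not proportional to $V_n$. Under Assumption \ref{A-step} it is only bounded below by $\lambda\ell$. The chain therefore satisfies a drift of the form $P_YV_p\le V_p-c+b\mathcal I_K$. That gives positive recurrence and total variation convergence, but no control of $\sup_n\mathbb{E}V_p(Y_{t_n})$: the comparison theorem applied to \eqref{bd9} yields only $\mu^\Delta(\delta V_p)<\infty$, not $\mu^\Delta(V_p)<\infty$.

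Your fallback, invoking Theorem \ref{th-bounded}, is exactly what the paper does: it asserts $\sup_n\mathbb{E}|Y_{t_n}|^p\le C$ ``by Theorem \ref{th-bounded}''. It has the defect you yourself identified. That theorem bounds $\mathbb{E}V_p(Y_{\underline t})$ at deterministic times $t$, via the supermartingale $e^{\lambda t_n}V_n-\ldots$. The weight $e^{\lambda t_n}$ is small precisely on paths where $V_n$ is large and the steps are short. By contrast, $Y_{t_n}$ is observed after a deterministic number of steps.

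These laws genuinely differ. The step-indexed invariant law is the time-stationary law reweighted by $1/\delta(x)$, and this factor can be as large as $V_p(x)/\ell$. Consider $d=m=1$, $f(x)=-ax$, $g(x)=\sqrt{1+\sigma^2x^2}$, $\delta(x)=\ell(1+x^2)^{-p/2}$ with $2\le p<1+2a/\sigma^2$ and $\ell$ small. All hypotheses of Proposition \ref{th-a2} hold. Yet, heuristically, this reweighting of the tail $|x|^{-2-2a/\sigma^2}$ leaves $\mu^\Delta$ with moments only of order below $1+2a/\sigma^2-p$, which can be less than $1$. So a uniform $p$th moment along the step index is not merely unproven but in general unavailable, and the full range $q\in[1,p)$ cannot be reached this way.

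What the available drift does support is the weighted-norm form of Lemma \ref{meyn14} (Meyn--Tweedie, Theorem 14.0.1). Under \eqref{delta1}, inequality \eqref{jm5} gives $\mu^\Delta(V_p^{1-\theta})<\infty$ and $\int V_p^{1-\theta}\,d|P_Y^n(x_0,\cdot)-\mu^\Delta|\to0$. By the same Villani bound this yields $\mathcal W_q(\mathcal L(Y_{t_n}),\mu^\Delta)\to0$ for $q\le p(1-\theta)$. Reaching $q<p$ would require an extra hypothesis tying the decay of $\delta$ to higher-order Lyapunov control.
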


\begin{proof}
Set $\mu_n = \mathcal{L}(Y_{t_n})$. By Theorem \ref{th-bounded},
\begin{equation}  \label{ym1}
	\mathop {\sup }\limits_{n \ge 0} \int_{{\mathbb{R}^d}} {{{\left| x \right|}^p}{\mu _n}(dx)}  = \mathop {\sup }\limits_{n \ge 0} \mathbb{E}{\left| {{Y_{{t_n}}}} \right|^p} \le C.
\end{equation}
For $R > 0$, define 
\begin{equation*}
	{\varphi _R}(x) = {\left| x \right|^p} \wedge R, \quad x \in \mathbb{R}^d.
\end{equation*}
Then, for each fixed $x \in \mathbb{R}^d$, $\varphi_R(x)$ is nondecreasing with respect to $R$. It follows from \eqref{aa2} and Theorem \ref{th-a2} that
\begin{equation*}
\begin{split}
	&\left| {\int_{{\mathbb{R}^d}} {{\varphi _R}(x){\mu _n}(dx)}  - \int_{{\mathbb{R}^d}} {{\varphi _R}(x){\mu ^\Delta }(dx)} } \right|\\
	&\le R\mathop {\sup }\limits_{\left| f \right| \le 1} \left| {\int_{{\mathbb{R}^d}} {f(x){\mu _n}(dx)}  - \int_{{\mathbb{R}^d}} {f(x){\mu ^\Delta }(dx)} } \right|  \\
	&= 2R{\left\| {{\mu _n} - {\mu ^\Delta }} \right\|_{{\rm TV}}} \to 0  \quad n \to \infty.
\end{split}
\end{equation*}
This together with \eqref{ym1} yields
\begin{equation*}
	\int_{{\mathbb{R}^d}} {{\varphi _R}(x){\mu ^\Delta }(dx)}  = \mathop {\lim }\limits_{n \to \infty } \int_{{\mathbb{R}^d}} {{\varphi _R}(x){\mu _n}(dx)} \le \mathop {\sup }\limits_{n \ge 0} \int_{{\mathbb{R}^d}} {{{\left| x \right|}^p}{\mu _n}(dx)} \le C .
\end{equation*}
Taking the limit $R \to \infty$ on both sides and using the monotone convergence theorem yields
\begin{equation}  \label{ym5}
	\int_{{\mathbb{R}^d}} {{{\left| x \right|}^p}{\mu ^\Delta }(dx)}  = \mathop {\lim }\limits_{R \to \infty } \int_{{\mathbb{R}^d}} {{\varphi _R}(x){\mu ^\Delta }(dx)}  \le C.
\end{equation}
Then, by \eqref{ym1} and \eqref{ym5},
\begin{equation}  \label{ym4}
	\int_{{\mathbb{R}^d}} {{{\left| x \right|}^p}\left| {{\mu _n} - {\mu ^\Delta }} \right|(dx)}  \le \int_{{\mathbb{R}^d}} {{{\left| x \right|}^p}{\mu _n}(dx)}  + \int_{{\mathbb{R}^d}} {{{\left| x \right|}^p}{\mu ^\Delta }(dx)}  \le 2C.
\end{equation}
Fix $q \in [1, p)$, it follows from \cite[Theorem 6.15]{villani2009optimal} that 
\begin{equation}  \label{ym2}
	{\mathcal{W}_q}({\mu _n},{\mu ^\Delta }) \le {2^{{1}/{q'}}}{\left( {\int_{{\mathbb{R}^d}} {{{\left| x \right|}^q}\left| {{\mu _n} - {\mu ^\Delta }} \right|(dx)} } \right)^{{1}/{q}}}, 
\end{equation}
where $1/q + 1/q' = 1$.
Using H\"older's inequality, \eqref{aa1} and \eqref{ym4}, we have
\begin{equation*}  
\begin{split}
	\int_{{\mathbb{R}^d}} {{{\left| x \right|}^q}\left| {{\mu _n} - {\mu ^\Delta }} \right|(dx)}  \le& {\left( {\int_{{\mathbb{R}^d}} {{{\left| x \right|}^p}\left| {{\mu _n} - {\mu ^\Delta }} \right|(dx)} } \right)^{q/p}}{\left( {\left| {{\mu _n} - {\mu ^\Delta }} \right|({\mathbb{R}^d})} \right)^{1 - q/p}} \\
	=& {2^{1 - q/p}}{\left( {\int_{{\mathbb{R}^d}} {{{\left| x \right|}^p}\left| {{\mu _n} - {\mu ^\Delta }} \right|(dx)} } \right)^{q/p}}\left\| {\mu_n  - \mu^\Delta } \right\|_{\rm TV}^{1 - q/p} .
\end{split}
\end{equation*}
Therefore, substituting this into \eqref{ym2}, using Proposition \ref{th-a2}, we conclude that
\begin{equation}  \label{ym6}
	{\mathcal{W}_q}({\mu _n},{\mu ^\Delta }) \le C\left\| {{\mu _n} - {\mu ^\Delta }} \right\|_{{\rm TV}}^{\frac{1}{q} - \frac{1}{p}} \to 0 \quad n \to \infty.
\end{equation}
This completes the proof.
\end{proof}

\subsection{Polynomial rate of convergence to the numerical invariant measure}

We first introduce a lemma for polynomial convergence of a Markov chain to its invariant probability measure in the total variation distance.

\begin{lemma} \label{aap2004} (\cite[Proposition 2.5]{douc2004practical})
	Suppose that $\Psi  = {\{ {\Psi _n}\} _{n \ge 0}}$ is a $\psi$-irreducible and aperiodic Markov chain on $(E, \mathcal{B}(E))$.
	Assume that there exist a petite set $K \in \mathcal{B}(E)$, a constant $b \in [0, \infty)$, and a measurable function $V:E \to [1, \infty)$, and a concave, nondecreasing, differentiable function $\phi: [1,\infty) \to \mathbb{R}_+ $ satisfying ${\lim _{t \to \infty }}\phi '(t) = 0$ such that
	\begin{equation*}  
		{P_\Psi }V(x)  +\phi(V(x)) \le V(x) + b{\mathcal{I}_K}(x), \quad x \in E.
	\end{equation*}
	where 
	\begin{equation*}
		{P_\Psi }V(x) = \int_E {V(y){P_\Psi }(x,dy)}. 
	\end{equation*}
	Define
	\begin{equation*}
		{H_\phi }(v) = \int_1^v {\frac{1}{{\phi (x)}}dx}, \quad {r_\phi }(n) = (H_\phi ^{ - 1})'(n) = \phi (H_\phi ^{ - 1}(n)).
	\end{equation*}
	Then, the chain $\Psi$ admits an invariant probability measure $\pi$, and
	\begin{equation*}
		\mathop {\lim }\limits_{n \to \infty } {r_\phi }(n){\left\| {P_\Psi ^n({x}, \cdot ) - \pi } \right\|_{{\rm{TV}}}} = 0, \quad x \in E.
	\end{equation*}
\end{lemma}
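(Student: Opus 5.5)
This is the cited result of Douc, Fort, Moulines and Soulier (Proposition 2.5 in \cite{douc2004practical}). To reconstruct its proof, I would turn the drift inequality into a bound on modulated moments of return times to the petite set $K$, and then invoke the subgeometric ergodic theorem of Nummelin--Tuominen / Tuominen--Tweedie. Write $\tau_K=\inf\{n\ge1:\Psi_n\in K\}$ for the first return time to $K$.

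\textbf{Properties of $r_\phi$.} Since $\phi$ is positive and $1/\phi$ is locally integrable, $H_\phi$ is increasing. I will assume $\phi(t)\le t$, which is harmless because $V\ge1$; concavity then makes $H_\phi(v)\to\infty$, so $H_\phi^{-1}:[0,\infty)\to[1,\infty)$ is well defined and increasing. Then $r_\phi=\phi\circ H_\phi^{-1}$ has the following properties.
\begin{itemize}
\item It is nondecreasing, because $\phi$ is nondecreasing.
\item It satisfies $r_\phi(n)\ge\phi(1)>0$.
\item It satisfies $\log r_\phi(n+1)-\log r_\phi(n)\to0$. Indeed, $(\log r_\phi)'(t)=\phi'(H_\phi^{-1}(t))\to0$ because $\phi'\to0$.
\end{itemize}
Hence $r_\phi$ is equivalent to a subgeometric rate sequence in the sense of Nummelin--Tuominen. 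In particular $r_\phi(n+m)\le C\,r_\phi(n)r_\phi(m)$ up to constants, by concavity.

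\textbf{Modulated moments (the main step).} The goal is
\begin{equation*}
\mathbb{E}_x\Bigl[\sum_{k=0}^{\tau_K-1} r_\phi(k)\Bigr]\le C\bigl(V(x)+b\bigr),\qquad x\in E.
\end{equation*}
I would introduce the time-dependent Lyapunov family
\begin{equation*}
W_k(x)=H_\phi^{-1}\bigl(H_\phi(V(x))+k\bigr)-H_\phi^{-1}(k).
\end{equation*}
The plan is to show, for $x\notin K$,
\begin{equation*}
P_\Psi W_{k+1}(x)\le W_k(x)-c\,r_\phi(k),
\end{equation*}
with an additive constant $b'$ on $K$. This should follow from concavity of $u\mapsto H_\phi^{-1}(H_\phi(u)+k+1)$, which holds because its derivative $\phi(H_\phi^{-1}(\cdot))/\phi(u)$ is controlled by monotonicity and concavity of $\phi$. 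With that concavity, Jensen's inequality gives $P_\Psi W_{k+1}\le$ the same function evaluated at $P_\Psi V\le V-\phi(V)$, and a first-order expansion then yields the decrement $\phi(H_\phi^{-1}(k))=r_\phi(k)$.

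Applying the optional stopping/Dynkin comparison theorem (Meyn--Tweedie, Theorem 14.2.2 style) to $W_{k\wedge\tau_K}$ then gives the displayed bound. This Jensen-plus-concavity verification is the step I expect to be the main obstacle; it is where $\lim_{t\to\infty}\phi'(t)=0$ and differentiability of $\phi$ are used.

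\textbf{Conclusion.} Because $r_\phi\ge\phi(1)>0$, the bound gives $\sup_{x\in K}\mathbb{E}_x\tau_K<\infty$ (by the drift inequality, $V$ is bounded on a suitably enlarged petite set). Combined with $\psi$-irreducibility, this yields positive Harris recurrence and hence an invariant probability measure $\pi$. Finally, $\sup_{x\in K}\mathbb{E}_x\sum_{k<\tau_K}r_\phi(k)<\infty$ holds for a petite $K$, and the chain is aperiodic. The Nummelin--Tuominen subgeometric ergodic theorem, proved via Nummelin splitting and a coupling/renewal argument with the rate $r_\phi$, then gives $r_\phi(n)\|P_\Psi^n(x,\cdot)-\pi\|_{\rm TV}\to0$ for every $x$ with $V(x)<\infty$, which is all $x\in E$.
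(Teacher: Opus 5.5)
Your construction is the one Douc, Fort, Moulines and Soulier use; the paper gives no proof beyond the citation. Your $W_k$ is exactly their time-inhomogeneous drift sequence. The Jensen step works because $G_k(u)=H_\phi^{-1}(H_\phi(u)+k)$ satisfies $G_k'(u)=\phi(G_k(u))/\phi(u)=r_\phi(H_\phi(u)+k)/r_\phi(H_\phi(u))$, which is nonincreasing in $u$ by log-concavity of $r_\phi$. Combined with $G_{k+1}(v)-G_k(v)\le\phi(G_{k+1}(v))$, this gives $P_\Psi W_{k+1}\le W_k-\int_k^{k+1}r_\phi(s)\,ds\le W_k-r_\phi(k)$ off $K$.

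Three small corrections:
\begin{itemize}
\item On $K$ the extra term is $b\,G_{k+1}'(V)\le b\,r_\phi(k+1)/r_\phi(0)$. It grows with $k$ and is not a constant $b'$. This is harmless, since before $\tau_K$ the chain is in $K$ only at time $0$.
\item The condition $\lim_{t\to\infty}\phi'(t)=0$ plays no role in this step. It is used only to make $r_\phi$ subgeometric.
\item Your assumption $\phi(t)\le t$ is unnecessary. Concavity already gives $\phi(t)\le\phi(1)+\phi'(1)(t-1)$, hence $H_\phi(v)\to\infty$.
\end{itemize}

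The genuine gap is in your last paragraph. The modulated bound reads $\mathbb{E}_x\sum_{k<\tau_K}r_\phi(k)\le V(x)+b\,r_\phi(1)/r_\phi(0)$. From this you can get $\sup_{x\in K}\mathbb{E}_x\tau_K<\infty$, and the hypothesis $\sup_{x\in K}\mathbb{E}_x\sum_{k<\tau_K}r_\phi(k)<\infty$ of the Nummelin--Tuominen/Tuominen--Tweedie theorem, only if $\sup_K V<\infty$. The drift inequality does not supply this: on $K$ it only says $P_\Psi V\le V-\phi(V)+b$.

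Your parenthetical remedy cannot work, because enlarging $K$ never makes $V$ bounded on it. Shrinking instead to the petite subset $K\cap\{V\le d\}$ does not save the statement either. You would have to absorb the constant $b$ on $K\cap\{V>d\}$ into a fraction $(1-c)\phi$. That requires $c\,\phi$ to exceed $b$ eventually, which fails if $\phi\le b$. It also replaces $r_\phi(n)$ by $r_{c\phi}(n)=c\,r_\phi(cn)$ with $c<1$, which is not equivalent to $r_\phi$ for stretched-exponential rates.

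The clean fix is to carry the hypothesis $\sup_K V<\infty$. This is in Douc et al.'s own formulation and in the Tuominen--Tweedie drift criterion you invoke; the transcription in this paper drops it. It is harmless where the paper applies the lemma, since there $K=\{x:V_p(x)\le\max\{4C/(pK_6),1\}\}$ is a sublevel set of $V_p$.

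Existence of $\pi$ alone does not need this hypothesis. Foster's criterion with a petite $K$ and an everywhere-finite $V$ already yields positive recurrence and an invariant probability. It is the rate $r_\phi(n)\|P_\Psi^n(x,\cdot)-\pi\|_{\rm TV}\to0$ that requires it.
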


\begin{proposition}  \label{th-a6}
Assume that all the conditions of Proposition \ref{th-a2} hold, except that Assumption \ref{A-step} is replaced by the stronger condition
\begin{equation}  \label{delta1}
	\delta (x) \ge \ell {\left( {1 + {{\left| x \right|}^2}} \right)^{ - p\theta /2}}, \quad x\in \mathbb{R}^d,
\end{equation}
for some constants $\ell > 0$ and $\theta \in (0,1)$. 
Then the adaptive time-stepping EM numerical solution ${\{ {Y_{{t_n}}}\}  _{n \ge 0}}$ defines a $\mu^{Leb}$-irreducible, aperiodic Markov chain. Moreover, the chain ${\{ {Y_{{t_n}}}\} _{n \ge 0}}$ admits a unique invariant probability measure $\mu^\Delta$, and satisfies
\begin{equation*}
	\mathop {\lim }\limits_{n \to \infty } {n^{(1 - \theta )/\theta}}{\left\| {P_Y^n({x_0}, \cdot ) - {\mu ^\Delta }} \right\|_{{\rm{TV}}}} = 0, \quad x_0 \in \mathbb{R}^d.
\end{equation*}
\end{proposition}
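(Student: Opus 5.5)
Since \eqref{delta1} implies Assumption \ref{A-step} (with $\theta<1$, $(1+|x|^2)^{-p\theta/2}\ge(1+|x|^2)^{-p/2}$), Step 1 of Proposition \ref{th-a2} carries over unchanged. In particular, the chain $\{Y_{t_n}\}_{n\ge0}$ is $\mu^{Leb}$-irreducible and aperiodic, it admits a unique invariant probability measure $\mu^\Delta$, and every compact set is petite by the Feller property of Lemma \ref{feller}. What remains is to verify the subgeometric drift condition of Lemma \ref{aap2004} with $V=V_p=(1+|x|^2)^{p/2}\ge1$ and $\phi(v)=c\,v^{1-\theta}$. This $\phi$ is concave, nondecreasing and differentiable on $[1,\infty)$, and $\phi'(v)=c(1-\theta)v^{-\theta}\to0$.

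For the drift inequality, start from the bound derived in Step 2 of Proposition \ref{th-a2}:
\begin{equation*}
P_Y V_p(x)\le \Bigl(1-\tfrac{pK_6}{4}\delta(x)\Bigr)V_p(x)+\delta(x)\Bigl(C-\tfrac{pK_6}{4}V_p(x)\Bigr).
\end{equation*}
If $V_p(x)>4C/(pK_6)$, the last term is nonpositive. Then \eqref{delta1}, written as $\delta(x)\ge \ell V_p(x)^{-\theta}$, gives
\begin{equation*}
P_YV_p(x)\le V_p(x)-\tfrac{pK_6\ell}{4}V_p(x)^{1-\theta}.
\end{equation*}
If $V_p(x)\le 4C/(pK_6)$, that is $x\in K$ with $K$ compact as before, we use $\delta(x)\le\delta_{\max}$ and the boundedness of $V_p$ on $K$. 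This yields $P_YV_p(x)+\phi(V_p(x))\le V_p(x)+b$ for a finite $b$, because $\phi(V_p)\le c\max_K V_p^{1-\theta}$ there. Setting $c=pK_6\ell/4$, the two cases combine to
\begin{equation*}
P_YV_p+\phi(V_p)\le V_p+b\mathcal I_K.
\end{equation*}

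Next compute the rate. We have
\begin{equation*}
H_\phi(v)=\int_1^v \frac{dx}{c x^{1-\theta}}=\frac{v^\theta-1}{c\theta},
\end{equation*}
so $H_\phi^{-1}(n)=(1+c\theta n)^{1/\theta}$ and $r_\phi(n)=c(1+c\theta n)^{(1-\theta)/\theta}$. Lemma \ref{aap2004} then gives $r_\phi(n)\|P_Y^n(x_0,\cdot)-\mu^\Delta\|_{\rm TV}\to0$. Since $r_\phi(n)\ge c(c\theta)^{(1-\theta)/\theta}n^{(1-\theta)/\theta}$, the stated limit follows.

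Uniqueness of the invariant measure produced by Lemma \ref{aap2004} is already covered by Proposition \ref{th-a2}, or can be rerun with the same TV argument. The main point needing care is the exponent bookkeeping in the drift step: the lower bound \eqref{delta1} must be converted exactly into $\delta(x)V_p(x)\ge \ell V_p(x)^{1-\theta}$, and the constant term must stay confined to the petite set $K$. Also, Lemma \ref{aap2004} requires $\phi$ to map into $\mathbb R_+$ on $[1,\infty)$, which holds here.
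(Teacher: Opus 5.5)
Your proposal is correct and follows the paper's proof essentially step for step: the same Lyapunov function $V_p$, the same $\phi(v)=\frac{pK_6\ell}{4}v^{1-\theta}$ obtained by converting \eqref{delta1} into $\delta(x)V_p(x)\ge \ell V_p(x)^{1-\theta}$, the same compact petite set $K$, and the same computation of $H_\phi$ and $r_\phi$. The differences are only cosmetic. On $K$ you absorb $\phi(V_p)$ into $b$ by compactness, whereas the paper applies \eqref{delta1} there as well. Your last step uses the lower bound $r_\phi(n)\ge c(c\theta)^{(1-\theta)/\theta}n^{(1-\theta)/\theta}$, which is the inequality actually needed to conclude.
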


\begin{proof}
Let $V_p(x) = {\left( {1 + {{\left| x \right|}^2}} \right)^{p/2}}$ for any $x \in \mathbb{R}^d$. Then, 
\begin{equation*}
	\delta (x) \ge \ell {V_p}{(x)^{ - \theta }} \ge \ell {V_p}{(x)^{ - 1}} = \ell {\left( {1 + {{\left| x \right|}^2}} \right)^{ - p/2}}, \quad x \in \mathbb{R}^d.
\end{equation*}
Thus, Assumption \ref{A-step} is satisfied. The $\mu^{Leb}$-irreducibility and aperiodicity of the chain ${\{ {Y_{{t_n}}}\} _{n \ge 0}}$, as well as the existence and uniqueness of its invariant probability measure $\mu^\Delta$, therefore follow from Theorem \ref{th-a2}. It remains to establish the polynomial convergence rate.

By the Lyapunov estimate established in the proof of Theorem \ref{th-a2}, 
\begin{equation*}
	\begin{split}
		\mathbb{E}\left[ {V_p({Y_{{t_{1}}}})|{Y_{{t_0}}} = x} \right] \le \left( {1 - \frac{{pK_6 }}{4}}\delta (x) \right)V_p(x) + \delta (x)\left( {C - \frac{pK_6 }{4}V_p(x)} \right), \quad  x \in \mathbb{R}^d.
	\end{split}
\end{equation*}
On the one hand, if $V_p(x) > 4C/(pK_6) $, using \eqref{delta1} we have
\begin{equation}  \label{jm1}	
	\mathbb{E}\left[ {{V_p}({Y_{{t_{1}}}})|{Y_{{t_0}}} = x} \right] \le {V_p}(x) - \frac{{p{K_6}\ell }}{4}{\left( {{V_p}(x)} \right)^{1 - \theta }}.
\end{equation}
On the other hand, if $V_p(x) \le 4C/(pK_6) $, then Assumption \ref{A-stepbound} and \eqref{delta1} imply
\begin{equation}	\label{jm3}
	\mathbb{E}\left[ {V_p({Y_{{t_{1 }}}})|{Y_{{t_0}}} = x} \right] \le V_p (x) - \frac{{p{K_6}\ell }}{4}{\left( {{V_p}(x)} \right)^{1 - \theta }} + C\delta_{\max}.
\end{equation}
Combining \eqref{jm1} and \eqref{jm3} we obtain
\begin{equation}  \label{jm5}
	\mathbb{E}\left[ {V_p({Y_{{t_{ 1}}}})|{Y_{{t_0}}} = x} \right] \le V_p(x) - \frac{{p{K_6}\ell }}{4}{\left( {{V_p}(x)} \right)^{1 - \theta }} + C{\delta _{\max }}{\mathcal{I}_K}(x), \quad x\in \mathbb{R}^d.
\end{equation}
where $K = \left\{ {x \in {\mathbb{R}^d}:V_p(x) \le \max \{4C/(pK_6), 1\} } \right\}$. As shown in the proof of Theorem \ref{th-a2}, $K$ is a petite set.
Define 
\begin{equation*}
	c_0 =\frac{{p{K_6}\ell }}{4}, \quad b = C\delta_{\max}, \quad \phi (x) = c_0{x^{1 - \theta }}.
\end{equation*}
Then \eqref{jm5} becomes
\begin{equation*}
	\mathbb{E}\left[ {{V_p}({Y_{{t_{n + 1}}}})|{Y_{{t_n}}} = x} \right] + \phi ({V_p}(x)) \le {V_p}(x) + b{{\cal I}_K}(x), \quad x \in \mathbb{R}^d.
\end{equation*}
Moreover, one computes that $\phi(\cdot)$ is positive, nondecreasing, differentiable, and concave. In addition, ${\lim _{x \to \infty }}\phi '(x) = 0$. Moreover, we compute
\begin{equation*}
\begin{split}
	\quad {H_\phi }(x) = \frac{{{x^\theta } - 1}}{c_0 \theta }, \quad H_\phi ^{ - 1}(x) = {\left( {1+c_0 \theta x} \right)^{\frac{1}{\theta }}}, \quad 
	 {r_\phi }(n) = c_0{\left( {1+c_0 \theta n} \right)^{\frac{{1 - \theta }}{\theta }}}.
\end{split}
\end{equation*}
Lemma \ref{aap2004} then implies that the chain ${\{ {Y_{{t_n}}}\} _{n \ge 0}}$ admits an invariant probability measure $\pi$ such that 
\begin{equation*}
	\mathop {\lim }\limits_{n \to \infty } {r_\phi }(n){\left\| {P_Y^n({x_0}, \cdot ) - \pi} \right\|_{{\rm{TV}}}} = 0 .
\end{equation*}
Since the chain ${\{ {Y_{{t_n}}}\} _{n \ge 0}}$ admits a unique invariant probability measure $\mu^\Delta$, it follows that $ \pi = \mu^\Delta$.
One sees
\begin{equation*}
	\mathop {\lim }\limits_{n \to \infty } \frac{{{r_\phi }(n)}}{{{n^{(1 - \theta )/\theta }}}} = {c_0}{({c_0}\theta )^{(1 - \theta )/\theta }} \le C,
\end{equation*}
this implies
\begin{equation*}
	\mathop {\lim }\limits_{n \to \infty } {n^{ (1 - \theta )/\theta}}{\left\| {P_Y^n({x_0}, \cdot ) - {\mu ^\Delta }} \right\|_{{\rm{TV}}}} = 0, \quad x \in \mathbb{R}^d.
\end{equation*}
The proof is therefore complete.
\end{proof}

The following theorem establishes the polynomial ergodicity of the adaptive time-stepping scheme \eqref{scheme} in $L^q$-Wasserstein distance with $q \in [1, p]$.

\begin{theorem}  \label{th-a7}
Under the assumptions of Proposition \ref{th-a6}, let $\mu^\Delta$ be the unique invariant probability measure of the adaptive time-stepping scheme
$\{Y_{t_n}\}_{n\ge0}$. Then, for any $q \in [1,p)$, where $p$ is given in Assumption \ref{A4.1}, 
\begin{equation*}
	\mathop {\lim }\limits_{n \to \infty } {n^{\beta '}}{\mathcal{W}_q}(\mathcal{L}({Y_{{t_n}}}),{\mu ^\Delta }) = 0 .
\end{equation*}
where $\beta' = (1-\theta)(p-q)/(\theta pq)$ and $\mathcal{L}({Y_{{t_n}}})$ denotes the law of $Y_{t_n}$.	
\end{theorem}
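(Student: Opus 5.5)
The proof reruns the argument of Theorem \ref{th-a4}, now feeding in the polynomial rate of Proposition \ref{th-a6} instead of mere convergence in total variation. Set $\mu_n=\mathcal{L}(Y_{t_n})$ and note that $\mu_n = P_Y^n(x_0,\cdot)$. Since Proposition \ref{th-a6} assumes everything in Proposition \ref{th-a2}, Theorem \ref{th-bounded} applies and gives $\sup_{n\ge0}\mathbb{E}|Y_{t_n}|^p\le C$. The truncation argument with $\varphi_R(x)=|x|^p\wedge R$, together with monotone convergence, then gives $\int|x|^p\mu^\Delta(dx)\le C$, exactly as in \eqref{ym5}. Consequently
\begin{equation*}
\int_{\mathbb{R}^d}|x|^p\,|\mu_n-\mu^\Delta|(dx)\le 2C\quad\text{uniformly in } n.
\end{equation*}

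Next, fix $q\in[1,p)$ and apply \cite[Theorem 6.15]{villani2009optimal} in the form \eqref{ym2}. We then interpolate with H\"older's inequality between the $p$th moment of $|\mu_n-\mu^\Delta|$ and its total mass, and use \eqref{aa1} to identify the total mass as $2\|\mu_n-\mu^\Delta\|_{\rm TV}$. This reproduces \eqref{ym6} with a constant $C$ independent of $n$:
\begin{equation*}
\mathcal{W}_q(\mu_n,\mu^\Delta)\le C\,\|\mu_n-\mu^\Delta\|_{\rm TV}^{\frac1q-\frac1p}=C\,\|\mu_n-\mu^\Delta\|_{\rm TV}^{\frac{p-q}{pq}}.
\end{equation*}

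Finally, multiply by $n^{\beta'}$ with $\beta'=\frac{(1-\theta)(p-q)}{\theta pq}$. Writing $s=\frac{p-q}{pq}>0$, we have $n^{\beta'}=\big(n^{(1-\theta)/\theta}\big)^{s}$, and therefore
\begin{equation*}
n^{\beta'}\mathcal{W}_q(\mu_n,\mu^\Delta)\le C\Big(n^{(1-\theta)/\theta}\,\|P_Y^n(x_0,\cdot)-\mu^\Delta\|_{\rm TV}\Big)^{s}.
\end{equation*}
By Proposition \ref{th-a6} the bracket tends to $0$. Because $s>0$, the power map $t\mapsto t^s$ is continuous at $0$, so the right-hand side tends to $0$, which is the claim.

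The main point to check is the uniformity of the constants, so that nothing in the interpolation step grows with $n$. The moment bound on $\mu_n$ must be uniform in $n$, which holds because it is uniform in time by Theorem \ref{th-bounded}. The moment bound on $\mu^\Delta$ is a single fixed constant. Everything else is direct substitution, so I expect no further obstacle.
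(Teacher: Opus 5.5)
Your proposal is correct and matches the paper's proof: the paper likewise takes the interpolation bound $\mathcal{W}_q(\mu_n,\mu^\Delta)\le C\|\mu_n-\mu^\Delta\|_{\rm TV}^{1/q-1/p}$ from \eqref{ym6} and raises the polynomial total-variation rate of Proposition \ref{th-a6} to the power $\frac1q-\frac1p$. Your added check that the constant in \eqref{ym6} does not depend on $n$, via the uniform moment bound of Theorem \ref{th-bounded}, is what justifies that step.
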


\begin{proof}
Set $\mu_n  = \mathcal{L}(Y_{t_n})$.
It follows from \eqref{ym6} that
\begin{equation*}  
	{\mathcal{W}_q}({\mu _n},{\mu ^\Delta }) \le C\left\| {{\mu _n} - {\mu ^\Delta }} \right\|_{{\rm TV}}^{\frac{1}{q} - \frac{1}{p}}.
\end{equation*}
Let
\begin{equation*}
	\beta = \frac{1-\theta}{\theta}, \quad \gamma = \frac{1}{q} - \frac{1}{p}>0, \quad \beta' = \beta \gamma.
\end{equation*}
By Theorem \ref{th-a6},
\begin{equation*}
	{n^\beta }{\left\| {{\mu _n} - {\mu ^\Delta }} \right\|_{\rm TV}} \to 0 \quad n \to \infty .
\end{equation*}
Then, we have
\begin{equation*}
	{n^{\beta' }}{\mathcal{W}_q}(\mathcal{L}({Y_{{t_n}}}),{\mu ^\Delta }) \le C{\left( {{n^\beta }{{\left\| {{\mu _n} - {\mu ^\Delta }} \right\|}_{\rm TV}}} \right)^\gamma } \to 0 \quad n \to \infty
\end{equation*}
This completes the proof.
\end{proof}

\subsection{Convergence rate of the numerical invariant measure}
In this subsection, we first consider the existence and uniqueness of the invariant probability measure for the SDE \eqref{equation}. We then prove that the numerical invariant measure converges to the exact invariant measure in $L_q$-Wasserstein distance.

\begin{lemma} (\cite[Theorem 3.1]{wang2018distribution}) \label{lem-a1}
	Let \eqref{llc}, Assumptions \ref{A4.1} and \ref{A5.1} hold. Then the SDE \eqref{equation} admits a unique invariant probability measure $\mu \in \mathcal{P}_p (\mathbb{R}^d)$, where $p$ is given in Assumption \ref{A4.1}. Furthermore, for any $\nu  \in {{\cal P}_p}({\mathbb{R}^d}) \cap {{\cal P}_{{p^*}}}(\mathbb{R}^d)$, there exists a constant $C > 0$ such that
	\begin{equation*}
		{\mathcal{W}_{p^*}}(\nu{P_t} ,\mu ) \le C{e^{ - \eta t}}{\mathcal{W}_{p^*}}(\nu ,\mu ), \quad t \ge 0.
	\end{equation*}
\end{lemma}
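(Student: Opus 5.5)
The plan is a synchronous-coupling argument: derive a pathwise $L^{p^*}$ contraction from Assumption \ref{A5.1}, lift it to a Wasserstein contraction of the semigroup $P_t$, and then get existence and uniqueness of $\mu$ from a Cauchy argument in the complete space $(\mathcal{P}_{p^*}(\mathbb{R}^d),\mathcal{W}_{p^*})$.

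\textbf{Step 1 (pathwise contraction).} For $x,y\in\mathbb{R}^d$ let $X^x(t),X^y(t)$ be the solutions of \eqref{equation} driven by the same Brownian motion, and set $Z(t)=X^x(t)-X^y(t)$. Apply It\^o's formula to $e^{p^*\eta t}|Z(t)|^{p^*}$. Assumption \ref{A5.1} gives
\begin{equation*}
p^*|Z|^{p^*-2}\Big(\langle Z,f(X^x)-f(X^y)\rangle+\frac{p^*-1}{2}|g(X^x)-g(X^y)|^2\Big)\le -p^*\eta|Z|^{p^*},
\end{equation*}
so the drift part of $d\big(e^{p^*\eta t}|Z(t)|^{p^*}\big)$ is nonpositive. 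Localize with the stopping times $\tau_R=\inf\{t:|X^x(t)|\vee|X^y(t)|\ge R\}$ so that the stochastic integral is a true martingale. Letting $R\to\infty$ with Fatou's lemma then yields
\begin{equation*}
\mathbb{E}|X^x(t)-X^y(t)|^{p^*}\le e^{-p^*\eta t}|x-y|^{p^*}.
\end{equation*}

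\textbf{Step 2 (lifting to measures).} Take $\nu,\nu'\in\mathcal{P}_{p^*}$ and an optimal coupling $\pi$ of them. Choose the initial pair $(X_0,Y_0)\sim\pi$, independent of $W$, and use the same synchronous coupling. By the Markov property, $(X(t),Y(t))$ is a coupling of $\nu P_t$ and $\nu' P_t$. Integrating the bound of Step 1 against $\pi$ gives
\begin{equation*}
\mathcal{W}_{p^*}(\nu P_t,\nu' P_t)\le e^{-\eta t}\mathcal{W}_{p^*}(\nu,\nu').
\end{equation*}
This is the claimed inequality once we know that $\mu P_t=\mu$.

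\textbf{Step 3 (moments).} $p$-th moments are uniformly bounded in time by Lemma \ref{lemma-EXt0}. The contraction is in $\mathcal{W}_{p^*}$, so I also need uniform $p^*$-moments. Put $y=\mathbf{0}$ in Assumption \ref{A5.1} and use Young's inequality on the cross terms $\langle x,f(\mathbf{0})\rangle$ and $\langle g(x)-g(\mathbf{0}),g(\mathbf{0})\rangle$. This gives a Khasminskii-type bound $\langle x,f(x)\rangle+\frac{\tilde p-1}{2}|g(x)|^2\le-\frac{\eta}{2}|x|^2+C$ for every $\tilde p<p^*$ close to $p^*$. The price is replacing $p^*$ by slightly smaller exponents, or equivalently absorbing a factor $(1+\varepsilon)$ into the $g$-term. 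Lemma \ref{lemma-EXt0} then gives $\sup_t\mathbb{E}|X^x(t)|^{\tilde p}\le C(1+|x|^{\tilde p})$.

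This step is where I expect the main difficulty. The uniform moment bound in exactly the exponent $p^*$, together with its interplay with $p$ (hence the restriction $\nu\in\mathcal{P}_p\cap\mathcal{P}_{p^*}$), needs care. If the endpoint exponent is not available, I would run the Cauchy argument of Step 4 in $\mathcal{W}_{\tilde p}$ and recover $\mathcal{W}_{p^*}$-finiteness only where it is actually needed.

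\textbf{Step 4 (existence).} Fix $x$. By Chapman--Kolmogorov and the contraction,
\begin{equation*}
\mathcal{W}(\delta_xP_{t+s},\delta_xP_t)=\mathcal{W}\big((\delta_xP_s)P_t,\delta_xP_t\big)\le e^{-\eta t}\mathcal{W}(\delta_xP_s,\delta_x)\le Ce^{-\eta t},
\end{equation*}
where the last inequality uses the uniform moments of Step 3. Hence $\{\delta_xP_t\}_{t\ge0}$ is Cauchy in a complete Wasserstein space and converges to some $\mu$. The contraction also shows that $P_s$ is continuous in $\mathcal{W}$, so $\mu P_s=\lim_t\delta_xP_{t+s}=\mu$, i.e. $\mu$ is invariant. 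Since $\mathcal{W}$-convergence implies weak convergence, Fatou's lemma together with Lemma \ref{lemma-EXt0} gives $\mu(|\cdot|^p)<\infty$, so $\mu\in\mathcal{P}_p$.

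\textbf{Step 5 (uniqueness).} If $\mu'$ is another invariant measure in the class, then $\mathcal{W}(\mu,\mu')=\mathcal{W}(\mu P_t,\mu'P_t)\le e^{-\eta t}\mathcal{W}(\mu,\mu')\to0$, so $\mu'=\mu$. Substituting $\nu'=\mu$ in Step 2 then gives the stated bound $\mathcal{W}_{p^*}(\nu P_t,\mu)\le Ce^{-\eta t}\mathcal{W}_{p^*}(\nu,\mu)$.
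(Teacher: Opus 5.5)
Your proposal is correct and follows the standard synchronous-coupling argument (It\^o contraction, then a Cauchy argument for existence and contraction for uniqueness) that underlies the cited result; the paper gives no proof of its own, only the reference to Wang's Theorem 3.1. The endpoint worry in Step 3 is unnecessary: Step 1 works verbatim for every exponent $q\in[2,p^*]$, so you can run the Cauchy argument in $\mathcal{W}_2$ using only the $p$th moments from Lemma \ref{lemma-EXt0}, and the $\mathcal{W}_{p^*}$ bound with $\nu'=\mu$ then holds with $C=1$ (trivially so when $\mathcal{W}_{p^*}(\nu,\mu)=\infty$).
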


\begin{theorem}  \label{th-a5}
Let \eqref{llc}, \eqref{delta1}, Assumptions \ref{A-stepsize}, \ref{A4.1}, \ref{A-stepbound}, \ref{A5.1} and \ref{A6.2} hold with $p \ge 3l-4$. For any $q \in [2,p^ * ) \cap \left[ {2,{{2p}}/(3l - 4)} \right]$, the exact and numerical invariant measures $\mu$ and $\mu^{\Delta}$ have the property
\begin{equation*}
	{\mathcal{W}_q}(\mu ,{\mu^\Delta }) \le C{\Delta ^{\frac{1}{2}}} .
\end{equation*}
\end{theorem}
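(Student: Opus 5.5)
We combine the uniform-in-time strong error bound of Theorem \ref{th-a3} with the ergodicity of the exact solution (Lemma \ref{lem-a1}) and of the numerical chain (Theorem \ref{th-a7}), via the triangle inequality
\begin{equation*}
	\mathcal{W}_q(\mu,\mu^\Delta) \le \mathcal{W}_q(\mu,\mathcal{L}(X(t_n))) + \mathcal{W}_q(\mathcal{L}(X(t_n)),\mathcal{L}(Y_{t_n})) + \mathcal{W}_q(\mathcal{L}(Y_{t_n}),\mu^\Delta).
\end{equation*}
The idea is to fix a deterministic initial value $x_0$, make the first and third terms arbitrarily small by sending time to infinity, and bound the middle term by $C\Delta^{1/2}$ uniformly in time.

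\emph{Middle term.} The natural coupling of $X$ and $Y$ driven by the same Brownian motion gives
\begin{equation*}
	\mathcal{W}_q(\mathcal{L}(X(t)),\mathcal{L}(Y(t)))^q \le \mathbb{E}|X(t)-Y(t)|^q \le C\Delta^{q/2}
\end{equation*}
by Theorem \ref{th-a3}, with $C$ independent of $t$. Because $t_n$ is random, I would not evaluate at the random time $t_n$. Instead I would work at deterministic times $t$ and use $Y(t)=Y_{\underline{t}}$.

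\emph{Third term.} Theorem \ref{th-a7} gives $\mathcal{W}_q(\mathcal{L}(Y_{t_n}),\mu^\Delta)\to0$ as $n\to\infty$, but it is indexed by step count, not by deterministic time. This mismatch is the main obstacle: I need $\mathcal{W}_q(\mathcal{L}(Y(t)),\mu^\Delta)\to 0$ as $t\to\infty$, where $Y(t)=Y_{t_{n_t}}$ and $n_t$ is random and dependent on the path. My first attempt is a subsequence argument. By Theorem \ref{th-bounded}, $\{\mathcal{L}(Y(t))\}_{t\ge0}$ has $\sup_t\mathbb{E}|Y(t)|^p<\infty$ with $p>q$, so it is $\mathcal{W}_q$-relatively compact. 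Also, $\mathcal{L}(X(t))\to\mu$ in $\mathcal{W}_{p^*}$, hence in $\mathcal{W}_q$ since $q<p^*$, by Lemma \ref{lem-a1} applied to $\nu=\delta_{x_0}$. Taking $t_k\to\infty$ with $\mathcal{L}(Y(t_k))\to\nu^\Delta$ in $\mathcal{W}_q$, lower semicontinuity of $\mathcal{W}_q$ gives
\begin{equation*}
	\mathcal{W}_q(\mu,\nu^\Delta)\le C\Delta^{1/2}.
\end{equation*}
It then remains to show $\nu^\Delta=\mu^\Delta$, or at least that some limit point equals $\mu^\Delta$.

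To identify the limit, one option is to start the numerical chain from $\mu^\Delta$ rather than from $x_0$. If $Y_0\sim\mu^\Delta$ independent of $W$, then $Y_{t_n}\sim\mu^\Delta$ for every $n$. However, $Y(t)$ at a fixed $t$ is still not $\mu^\Delta$-distributed, since sampling at deterministic times biases toward states with longer steps. I therefore expect the cleanest route is to work directly with the discrete index. I would couple $X(t_n)$ with $Y_{t_n}$, where $t_n$ is a stopping time, and obtain
\begin{equation*}
	\mathbb{E}|X(t_n)-Y_{t_n}|^q \le C\Delta^{q/2}.
\end{equation*}
To get this, I would redo the exponential-weight Itô argument in the proof of Theorem \ref{th-a3} with optional stopping at $t_n\wedge m$. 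The key point is that \eqref{ce1} holds pathwise in expectation up to bounded stopping times, because the integrand bound \eqref{ce6} came from conditional estimates given $\mathcal{F}_{\underline{s}}$.

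It then remains to show $\mathcal{W}_q(\mathcal{L}(X(t_n)),\mu)\to0$ as $n\to\infty$. Since $t_n\to\infty$ almost surely (each step is at least $\ell(1+|Y|^2)^{-p\theta/2}$ and the moments are bounded), and $X$ is exponentially contractive, I would couple $X$ with a stationary solution $\tilde X$ started from $\mu$ on the same Brownian motion. Assumption \ref{A5.1} gives $\mathbb{E}|X(t)-\tilde X(t)|^{q}\le Ce^{-q\eta t}$, and the same stopping-time argument gives $\mathbb{E}|X(t_n)-\tilde X(t_n)|^{q}\le C\,\mathbb{E}e^{-c t_n}\to0$. The remaining difficulty is that $\tilde X(t_n)$ need not have law $\mu$ at a random time. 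This step is the genuine obstacle, and I would handle it by bounding
\begin{equation*}
	\mathcal{W}_q(\mathcal{L}(\tilde X(t_n)),\mu)
\end{equation*}
through moment bounds plus the fact that $t_n$ is adapted. If that fails, I would fall back on the deterministic-time subsequence argument above, identifying the limit through the invariance of $\mu^\Delta$ for the time-$\delta$ chain.

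Finally, letting $n\to\infty$ in the triangle inequality leaves $\mathcal{W}_q(\mu,\mu^\Delta)\le C\Delta^{1/2}$. The constant is independent of $n$ because all the bounds from Theorems \ref{th-bounded} and \ref{th-a3} are uniform in time.
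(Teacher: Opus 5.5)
\textbf{Verdict: the proposal has a genuine gap, and the gap cannot be closed because the statement is false as written.}

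You have identified the right issue, and the paper's own proof does not resolve it. The paper writes exactly your triangle inequality at index $n$. It then applies Lemma~\ref{lem-a1} and Theorem~\ref{th-a3} at the random times $t_n$ as if they were deterministic, which is precisely the step you refuse to take.

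The step you call ``the genuine obstacle'' is $\mathcal{W}_q(\mathcal{L}(X(t_n)),\mu)\to0$; in your fallback it is the identification of a deterministic-time limit point $\nu^\Delta$ with $\mu^\Delta$. Neither can be proved, because when $\mu^\Delta$ is the invariant law of the embedded chain $\{Y_{t_n}\}$ the inequality fails. Here is a counterexample satisfying every hypothesis, including nondegeneracy:
\begin{itemize}
\item coefficients: $d=m=1$, $f(x)=-x$, $g\equiv1$, so $\mu=N(0,\tfrac12)$;
\item timesteps: $h(x)=(1+x^2)^{-1}$, $\delta=h$, $\delta_{\max}=1$, $\delta^\Delta=\tfrac{\Delta}{2}h$;
\item parameters: $\gamma=\tfrac13$, $l=3$, $p=6$, $\theta=\tfrac12$, $q=2$, and any $p^*>2$.
\end{itemize}
A one-step computation gives
\[
\mathbb{E}\bigl[Y_{t_1}^2\,\big|\,Y_0=x\bigr]-x^2=-\Delta\Bigl(1-\tfrac32\,h(x)\Bigr)+\tfrac{\Delta^2}{4}\,x^2h(x)^2 .
\]
Integrate this against $\mu^\Delta$. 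We may assume $\int x^2\,d\mu^\Delta<\infty$, since otherwise $\mathcal{W}_2(\mu,\mu^\Delta)=\infty$ already. By invariance the left side integrates to zero, which yields $\tfrac23-\tfrac{\Delta}{24}\le\int h\,d\mu^\Delta\le\tfrac23$. On the other hand, $\int h\,d\mu=e\sqrt{\pi}\,\mathrm{erfc}(1)\approx0.758$, and $h$ is $1$-Lipschitz. Hence $\mathcal{W}_q(\mu,\mu^\Delta)\ge\mathcal{W}_1(\mu,\mu^\Delta)>0.09$ for every $\Delta\in(0,1]$.

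The reason is structural. The embedded chain is a uniform-step Euler--Maruyama scheme for the time-changed diffusion with generator $\tfrac12 hL$, whose invariant law is proportional to $\mu/h$, not $\mu$. Correspondingly, $\mathcal{L}(X(t_n))$ does not converge to $\mu$: the times $t_n$ oversample regions where $\delta^\Delta$ is small.

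\textbf{What your deterministic-time route does prove.} It proves the corrected statement. For deterministic $t$, Lemma~\ref{lem-a1} with $\nu=\delta_{x_0}$ and Theorem~\ref{th-a3} give $\mathcal{W}_q(\mu,\mathcal{L}(Y(t)))\le Ce^{-\eta t}+C\Delta^{1/2}$. Average this over $t\in[0,T]$, using convexity of $\mathcal{W}_q^q$. By the ratio ergodic theorem for the positive Harris recurrent chain $\{Y_{t_n}\}$, the Ces\`aro laws $\frac1T\int_0^T\mathcal{L}(Y(t))\,dt$ converge weakly to the time-weighted measure
\[
\tilde\mu^\Delta(dx)=\frac{\delta^\Delta(x)\,\mu^\Delta(dx)}{\int_{\mathbb{R}^d}\delta^\Delta\,d\mu^\Delta}.
\]
Lower semicontinuity of $\mathcal{W}_q$ then gives $\mathcal{W}_q(\mu,\tilde\mu^\Delta)\le C\Delta^{1/2}$.

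This $\tilde\mu^\Delta$ is the length-biased law you anticipated. It is also what the weighted empirical measures in Section~\ref{sec9} actually estimate. It coincides with $\mu^\Delta$ only when $\delta^\Delta$ is $\mu^\Delta$-a.e.\ constant. So the repair is to change the measure in the statement, not to find a cleverer random-time argument.

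Separately, your bound $\mathbb{E}|X(t_n)-Y_{t_n}|^q\le C\Delta^{q/2}$ is not a routine optional-stopping variant of Theorem~\ref{th-a3}. The weight $e^{(q\eta-\kappa)t_n}$ is random and cannot be divided out, and \eqref{ce6} is only a bound at deterministic times.
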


\begin{proof}
By the triangle inequality, for any $n \ge 0$ we arrive at
\begin{equation}  \label{pq8}
	{\mathcal{W}_q}(\mu ,{\mu^\Delta }) \le {\mathcal{W}_q}(\mathcal{L}({X_{{t_n}}}), \mu) + {\mathcal{W}_q}(\mathcal{L}({X_{{t_n}}}),\mathcal{L}({Y_{{t_n}}})) + {\mathcal{W}_q}(\mathcal{L}({Y_{{t_n}}}),{\mu^\Delta }).
\end{equation}
From Lemma \ref{lem-a1} we see
\begin{equation} \label{pq7}
	 \mathop {\lim }\limits_{n \to \infty } {{\cal W}_q}({\cal L}(X({t_n})),\mu ) \le \mathop {\lim }\limits_{n \to \infty } {{\cal W}_{{p^*}}}({\cal L}(X({t_n})),\mu ) = 0 .
\end{equation}
Using Theorem \ref{th-a7} one has
\begin{equation}  \label{pq0}
	\mathop {\lim }\limits_{n \to \infty } {\mathcal{W}_q}(\mathcal{L}({Y_{{t_n}}}),{\mu^\Delta }) = 0.
\end{equation}
Hence, taking $n \to \infty$ on both sides of \eqref{pq8}, and applying \eqref{pq7}, \eqref{pq0} and Theorem \ref{th-a3} yields
\begin{equation*}
	{\mathcal{W}_q}(\mu ,{\mu^\Delta }) \le {\left( {\mathbb{E}{{\left| {X({t_n}) - {Y_{{t_n}}}} \right|}^q}} \right)^{\frac{1}{q}}} \le C{\Delta ^{\frac{1}{2}}} .
\end{equation*}
The proof is therefore complete.

\end{proof}

\section{Numerical experiments}  \label{sec9}
We compare the performance of the adaptive time-stepping EM scheme (denoted by AEM) with several existing methods, including the adaptive LMS algorithm \cite{lamba2007an}, Lemaire's method \cite{lemaire2007an}, and three widely used fixed-step schemes: the backward EM \cite{kloeden1992numerical, hu1996semi}, tamed EM \cite{hutzenthaler2012strong, sabanis2016euler}, truncated EM \cite{li2019explicit} schemes. For the latter, we take as the uniform stepsize $\delta_{\mathrm{mean}}$ the average of all time steps $\delta_n^{(m)}$ across all $M$ sample paths, defined by
\begin{equation}
	{\delta_{\mathrm{mean}}} = \frac{1}{M}\sum\limits_{m = 1}^M {\frac{1}{{{N_T^{(m)}}}}\sum\limits_{n = 1}^{{N_T^{(m)}}} {\delta _n^{(m)}} }. 
\end{equation}
This implies that AEM and the fixed-step numerical schemes require the same number of timesteps to reach the terminal time $T$. Consequently, their computational costs are comparable, which allows for a fair comparison of their numerical performance \cite{kelly2018adaptive}. In the following numerical experiments, error bounds are measured using the root mean square error (RMSE). Moreover, the expectations are approximated by computing averages over $M$ sample paths. Numerical experiments are implemented in  MATLAB R2023a on DESKTOP-QRSSSIC (11VECTO1WW).

\subsection{Stiff SDEs}
Stiff SDEs, whose solutions involve components with widely differing time scales, have been extensively studied in various applications, including stochastic chemical kinetics \cite{rathinam2003stiffness,ilie2012adaptive} and stochastic reaction-diffusion systems \cite{ta2015an}.

\begin{example}  \label{ex-c} {\rm 
Consider the scalar stiff SDE
\begin{equation}	\label{ex-stiff}
	dX(t) = [(X(t) - 1)(5 - X(t))(X(t) -20)]dt + 10X(t)dW(t), \quad X(0) = x_0.
\end{equation}
A direct computation implies that Assumptions \ref{A2.1} and \ref{A3.1} hold with $p, p^* >2$ and $l = 4$. 
The adaptive timestep function can be chosen as
\begin{equation}  \label{exam-2}
	{\delta ^\Delta }(x) = \left[ {\left( {\frac{{1 \vee {{\left| x \right|}^2}}}{{1 \vee {{\left| {f(x)} \right|}^2}}}} \right) \wedge {{\left( {\frac{{1 \vee {{\left| x \right|}^2}}}{{1 \vee {{\left| {g(x)} \right|}^2}}}} \right)}^2}} \right]\Delta , \quad x \in \mathbb{R}^d. 
\end{equation}
One computes that the adaptive timestep function \eqref{exam-2} satisfies Assumptions \ref{A-stepsize} and \ref{A-Delta}.
By Theorem \ref{rate1}, AEM \eqref{scheme} admits a $1/2$-order of convergence rate.

Figure \ref{z1} displays a sample path generated by the AEM scheme for SDE \eqref{ex-stiff}, together with the corresponding adaptive timestep at each step. One observes that the timestep reduces in regions where the path exhibits rapid oscillations, whereas larger timesteps are used in relatively smooth regions.
Figure \ref{z2} plots the convergence rate of AEM scheme with different timesteps. It is evident from Figure \ref{z2} that the convergence rate of AEM scheme is close to $1/2$, which is consistent with our theoretical result.

Next, we are going to compare the performance of AEM with fixed-step schemes with uniform stepsize $\delta_{\mathrm{mean}}$.
Since the exact solution of SDE \eqref{ex-stiff} cannot be given explicitly, we take the solution generated by the same method with a timestep that is two times smaller as the reference solution. 
Figure \ref{z3} shows the comparisons of AEM and fixed-step backward EM, tamed EM, truncated EM schemes for RMSE against the average number of timesteps and CPU time. For a given RMSE = $0.1$, it can be observed in Figure \ref{z3} (Left) that the AEM costs slightly fewer timesteps than backward EM, tamed EM and truncated EM schemes, and that in Figure \ref{z3} (right), AEM costs less CPU time than backward EM, tamed EM and truncated EM schemes. These show the better performance of AEM for the stiff SDE \eqref{ex-stiff}.
	
\begin{figure}[H]
	\centering
	\includegraphics[width=0.7\textwidth]{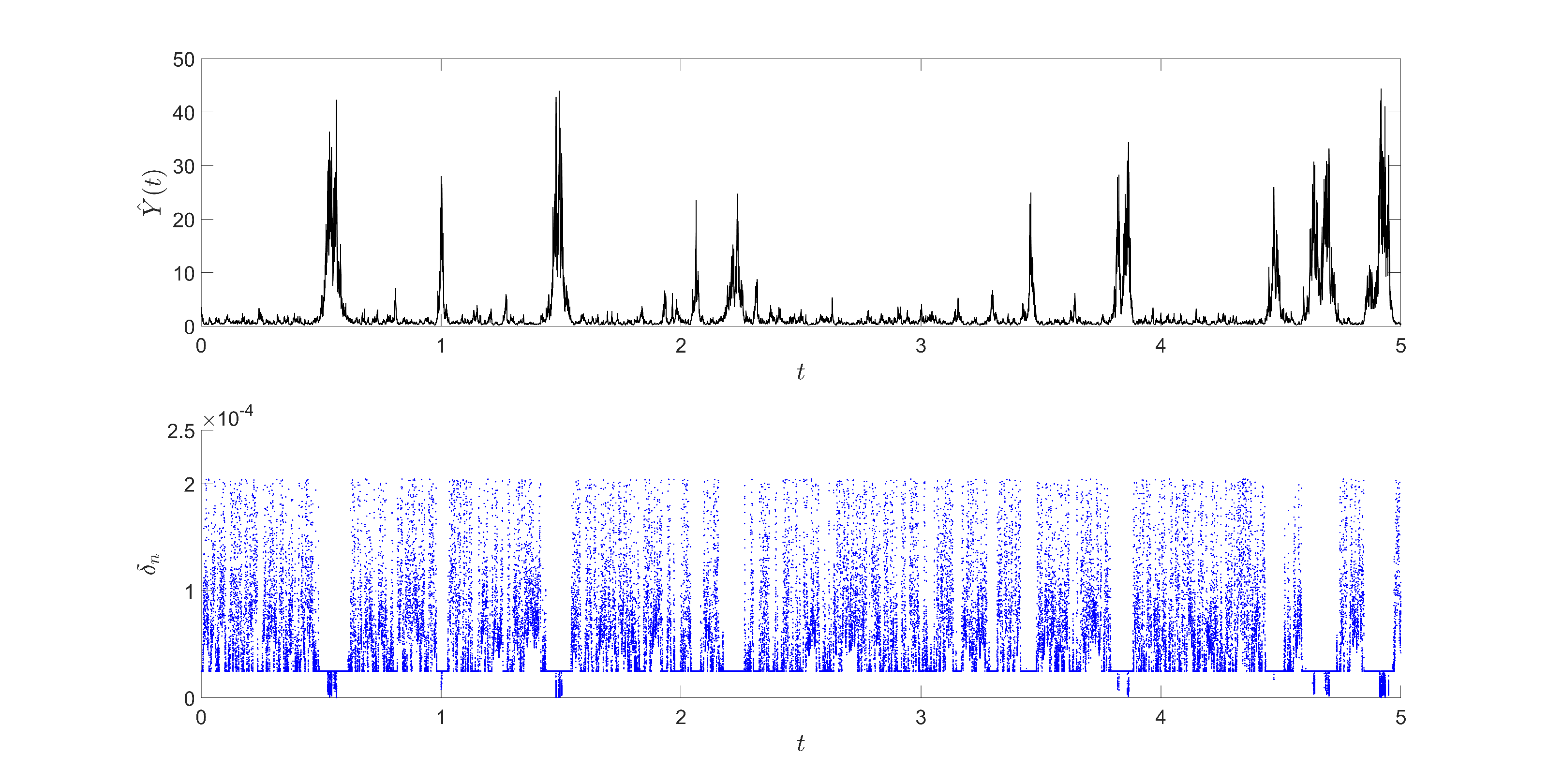}
	\caption{One sample path generated by AEM scheme and the corresponding adaptive timesteps for SDE \eqref{ex-stiff}, based on initial value $x_0 = 3$ and $\Delta = 2^{-2}$.}
	\label{z1}
\end{figure}
	
\begin{figure}[H]
	\centering
	\includegraphics[width=0.7\textwidth]{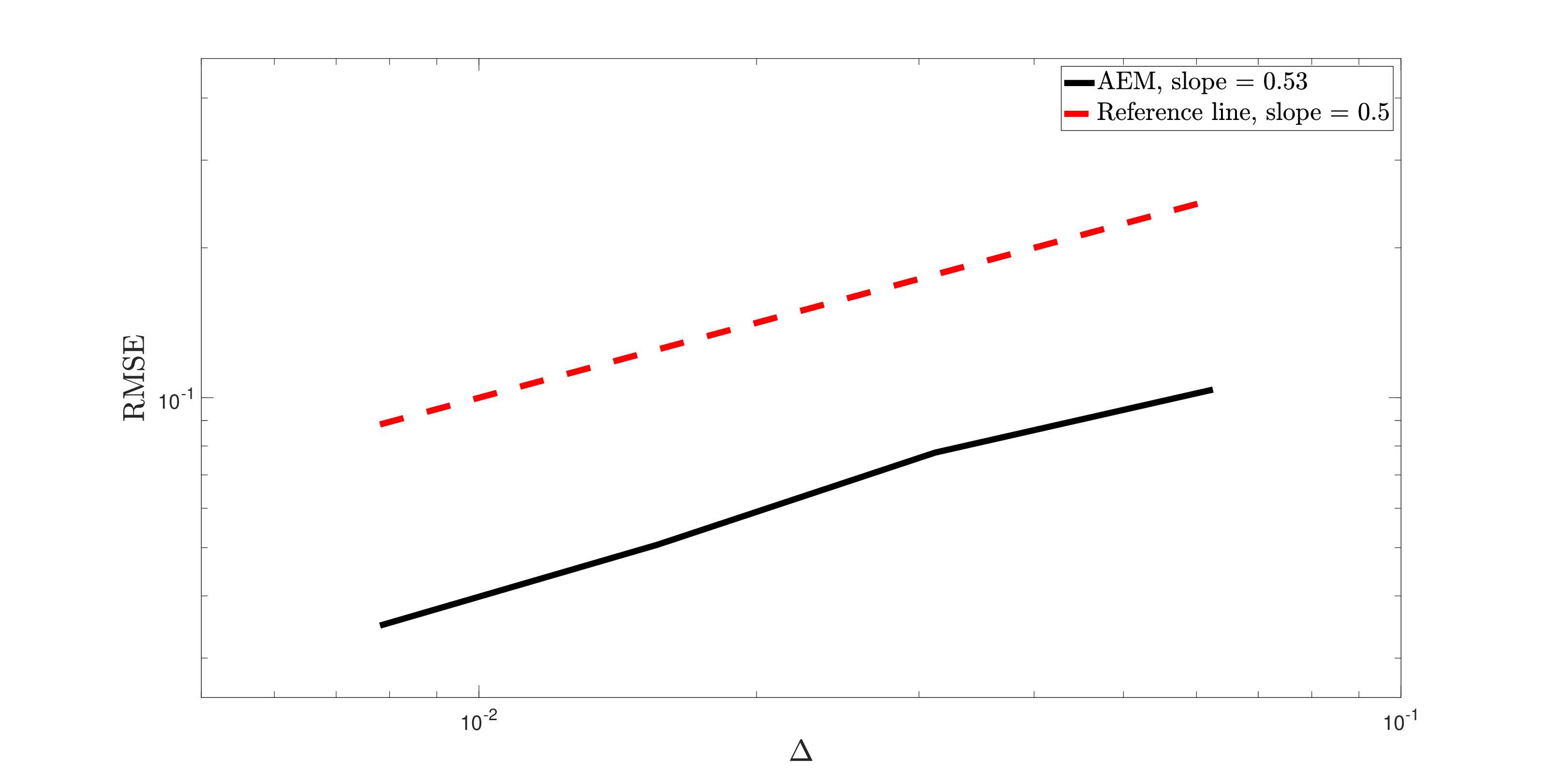}
	\caption{The strong convergence rate of AEM for SDE \eqref{ex-stiff}, based on initial value $x_0 = 3$, $M = 500$ sample paths and $\Delta  \in \{ {2^{ - 7}},{2^{ - 6}},{2^{ - 5}},{2^{ - 4}}\} $.}
	\label{z2}
\end{figure}

\begin{figure}[H]
	\centering
	\begin{subfigure}{0.48\textwidth}
		\centering
		\includegraphics[width=\textwidth]{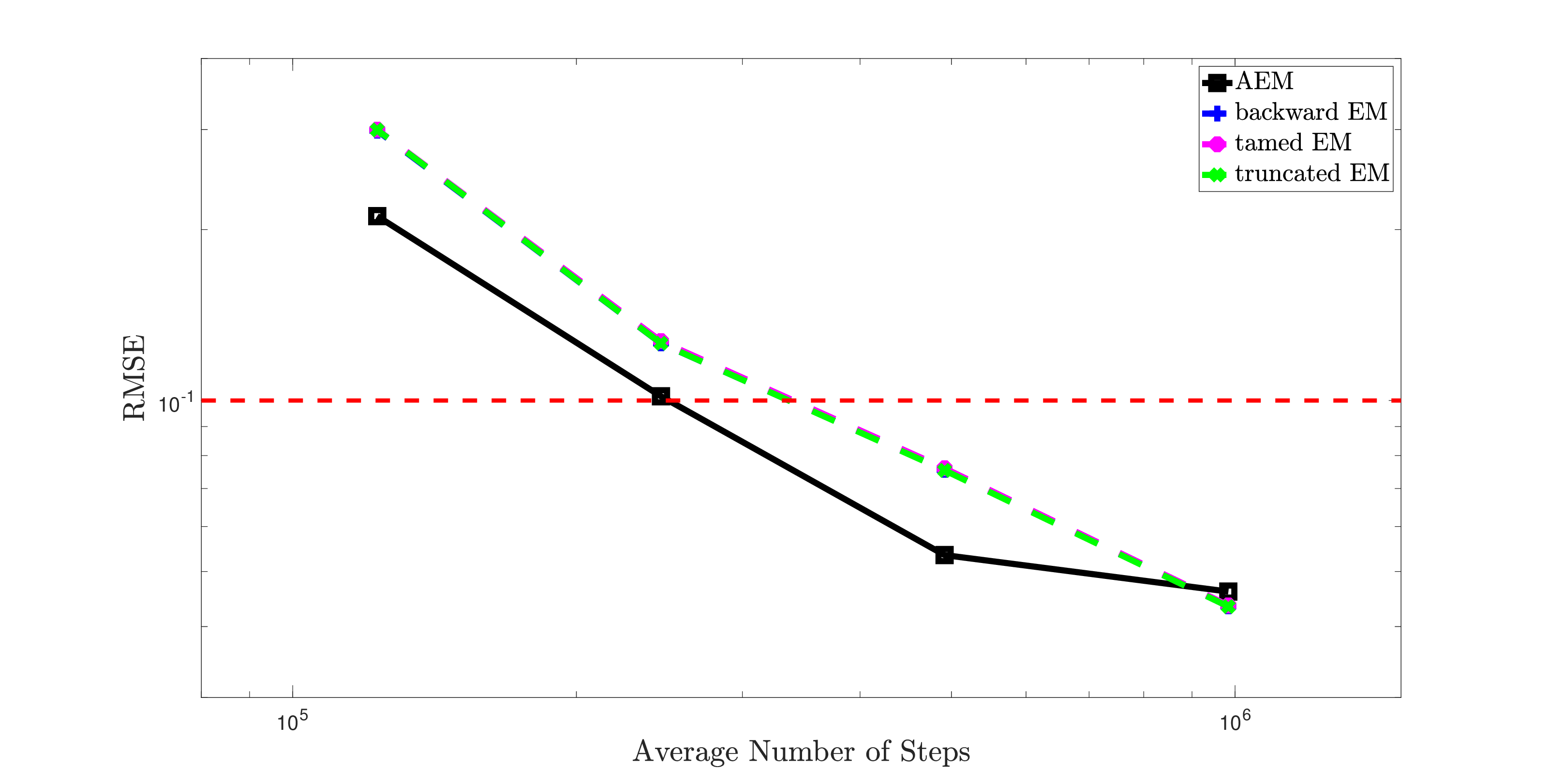}
	\end{subfigure}
	\hspace{0.01\textwidth}
	\begin{subfigure}{0.48\textwidth}
		\centering
		\includegraphics[width=\textwidth]{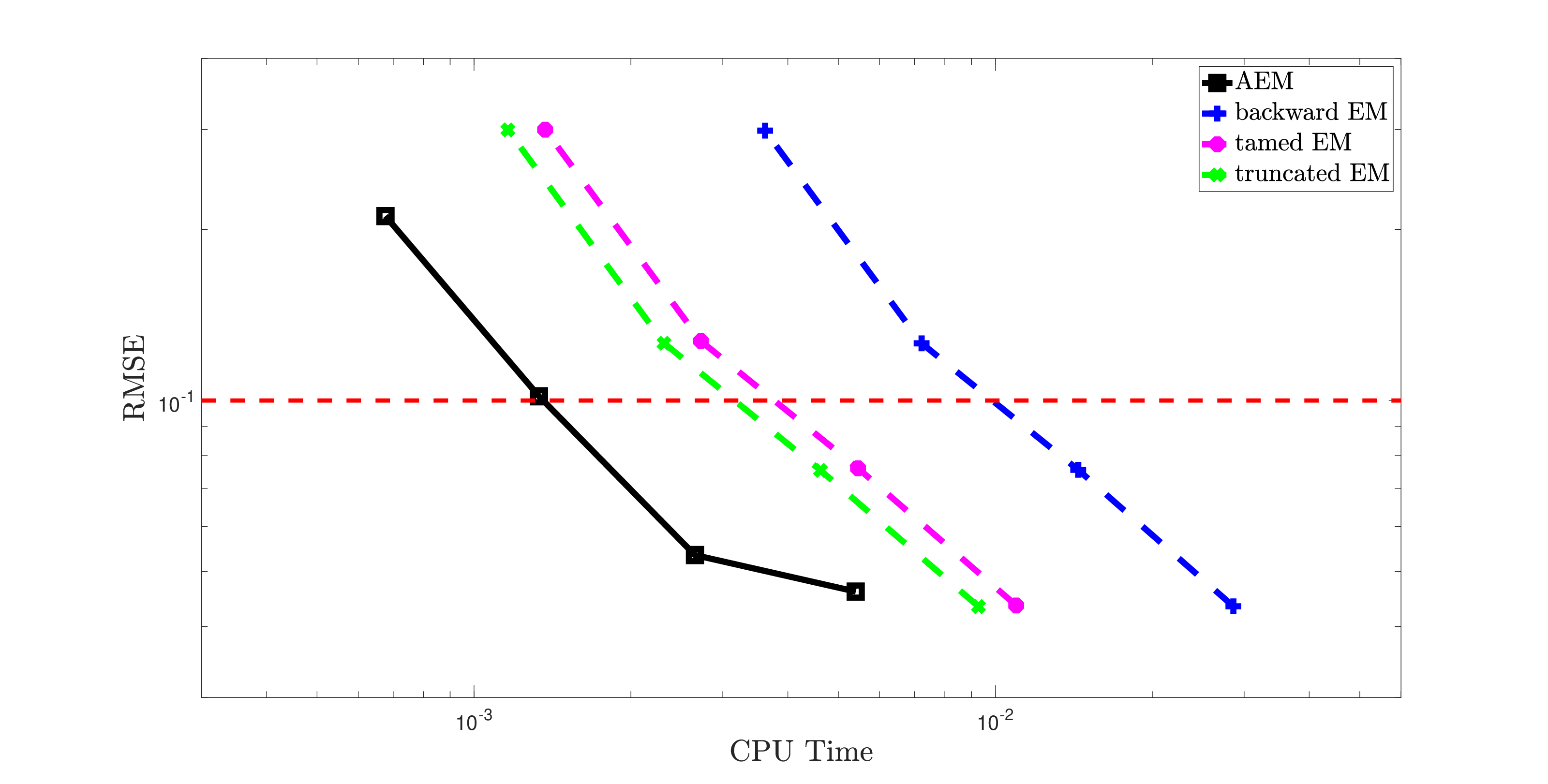}
	\end{subfigure}
	\caption{RMSE of numerical schemes for SDE \eqref{ex-stiff} with initial value $x_0 = 3$ and $M = 500$ sample paths.}
	\label{z3} 
\end{figure}
}	
\end{example}

\subsection{Nonstiff SDEs}
\begin{example}  \label{ex_a} {\rm 
Consider the two-dimensional Ginzburg-Landau equation, which describes a phase transition in superconductivity theory (see e.g., \cite{ginzburg2009on, kloeden1992numerical, hutzenthaler2015numerical})
\begin{equation}  \label{exam-1}
	dX(t) = \left( {\left( {\eta  + \frac{1}{2}{\sigma ^2}} \right)X(t) - {X^3}(t)} \right)dt + \sigma X(t)dW(t), \quad X(0) = x_0.
\end{equation}
It has an analytical solution
\begin{equation} \label{ex11}
	\begin{split}
		X(t) = \frac{{{x_0}\exp ({\eta}t + {\sigma}W(t))}}{{\sqrt {1 + 2x_0^2\int_0^t {\exp (2{\eta}s + 2{\sigma}W(s))ds} } }}, \quad t \ge 0.
	\end{split}
\end{equation}

Set $\eta = -3/2$, $\sigma = 1$, and $x_0 = [3,5]^{\rm T}$. One computes that SDE \eqref{exam-1} satisfies Assumptions \ref{A5.1} and \ref{A6.1}. It follows from Theorem \ref{th-a3} that the AEM scheme has a strong convergence rate of order $1/2$ uniformly in time. Moreover, by choosing an adaptive timestep function
\begin{equation}  \label{ex12}
	\delta(x)
	=
	\begin{cases}
		\displaystyle
		\delta_{\max}
		\wedge
		\left(\frac{\lvert x\rvert}{\lvert f(x)\rvert}\right)^{2}
		\wedge
		\left(\frac{\lvert x\rvert^{2}}{\lvert g(x)\rvert^{2}}\right)^{2},
		& x\neq \mathbf{0}, \\[1.2ex]
		\delta_{\max},
		& x=\mathbf{0}.
	\end{cases}
\end{equation}
One can verify that  \eqref{ex12} satisfies Assumption \ref{A-stability}. Hence, AEM can reproduce the moment exponential stability of the SDE \eqref{exam-1}.

Next, we carry out the numerical experiments using the AEM scheme.
Figure \ref{u4} plots the strong error of AEM with different timesteps. One observes that the convergence rate is close to 1/2, which is consistent with the theoretical results.
Figure \ref{5} presents the mean-square exponential stability of the AEM scheme for SDE \eqref{exam-1}. As observed from the figure, the logarithm of the mean square error 
$\log (E{| {\hat Y(t)} |^2})$ for both components decays linearly with time, which matches the fitted least squares line closely and confirms the theoretical exponential stability.
 
\begin{figure}[H]
	\centering
	\includegraphics[width=0.7\textwidth]{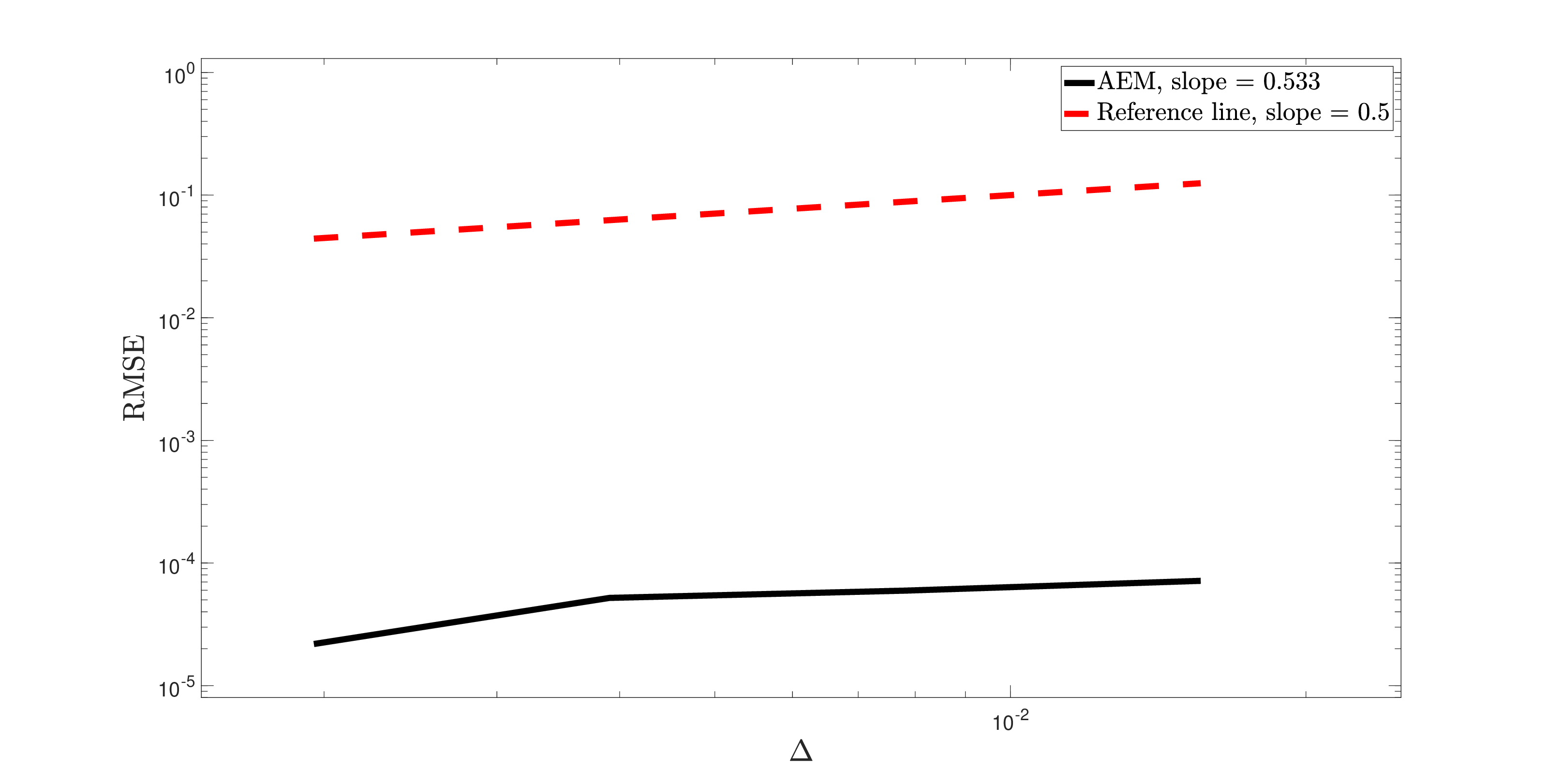}
	\caption{The strong convergence rate of AEM for SDE \eqref{exam-1}, based on initial value $x_0 = [1.5,1]^{\rm T}$, $T = 10$, $M = 3000$ sample paths and $\Delta  \in \{ {2^{ - 9}},{2^{ - 8}},{2^{ - 7}},{2^{ - 6}}\} $.}
	\label{u4}
\end{figure}

\begin{figure}[H]
	\centering
	\begin{subfigure}{0.48\textwidth}
		\centering
		\includegraphics[width=\textwidth]{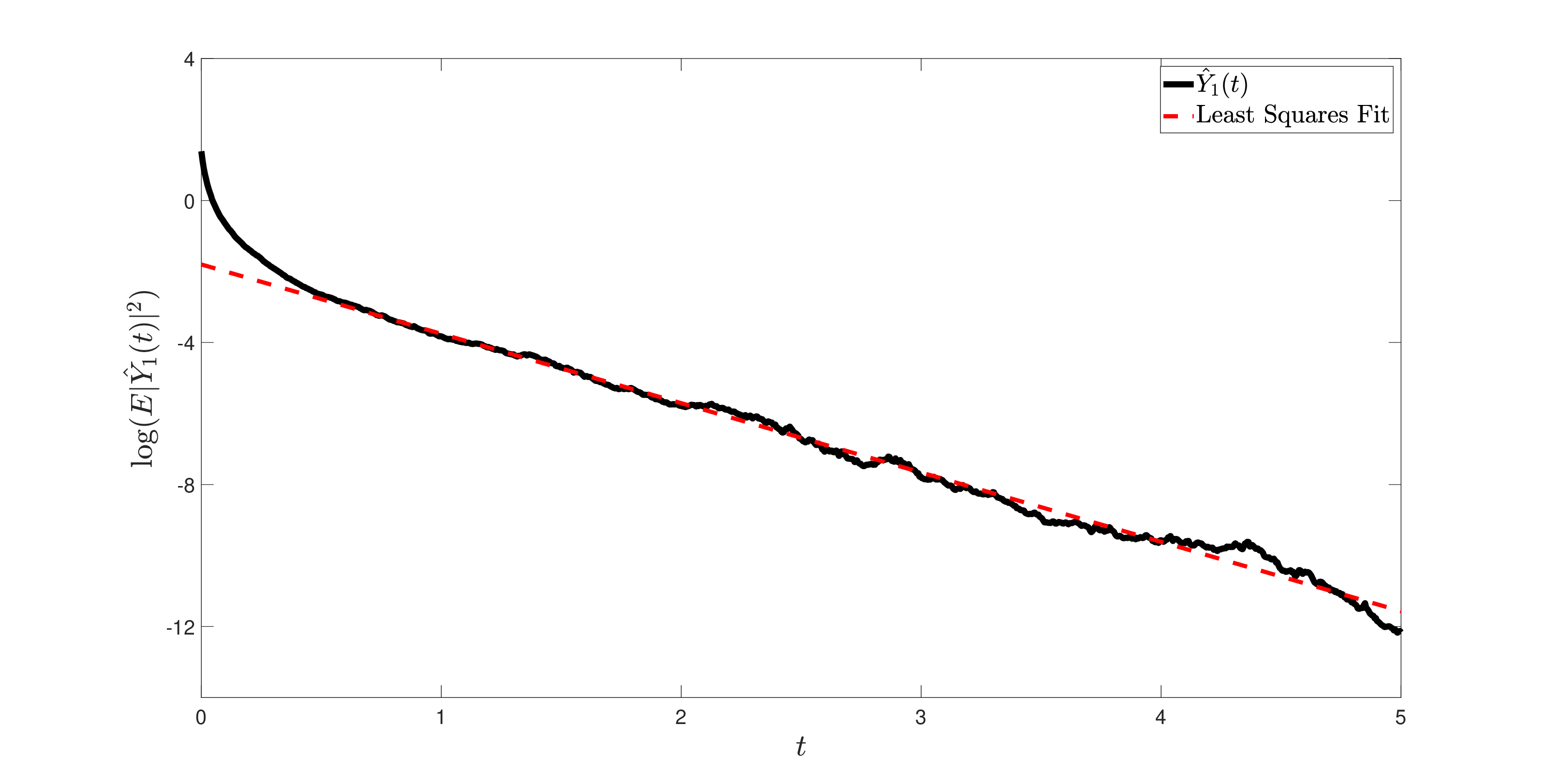}
	\end{subfigure}
	\hspace{0.01\textwidth}
	\begin{subfigure}{0.48\textwidth}
		\centering
		\includegraphics[width=\textwidth]{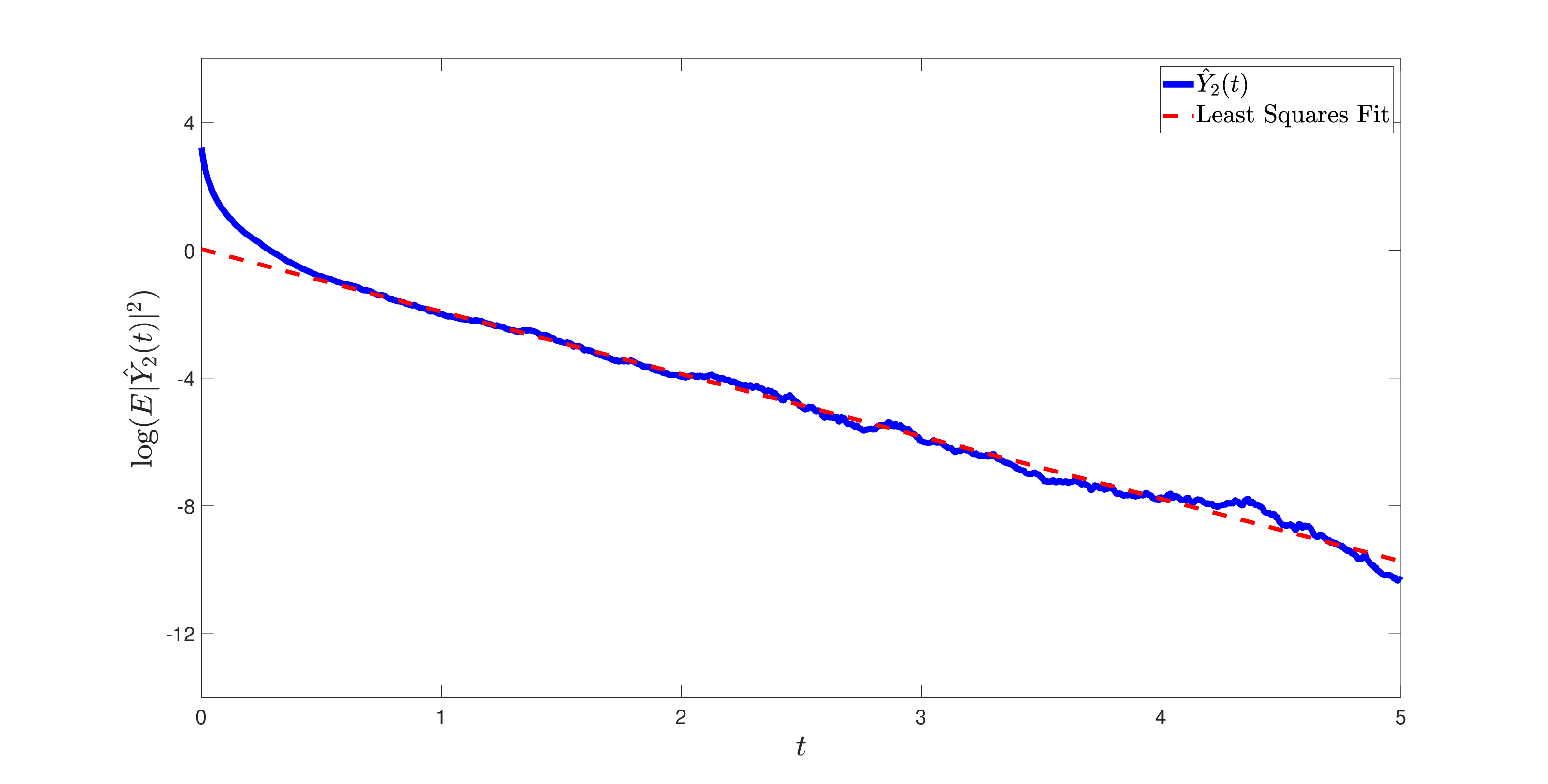}
	\end{subfigure}
	\caption{The mean-square exponential stability of AEM scheme for SDE \eqref{exam-1} with initial value $x_0 = [2,5]^{\rm T}$.}
	\label{5} 
\end{figure}

Then, we compare the performance of AEM with backward EM, tamed EM and truncated EM schemes using the uniform stepsize $\delta_{\mathrm{mean}}$.
Figure \ref{u3} shows the comparisons of RMSE against the number of timesteps and CPU time. For a given RMSE = 0.01, Figure \ref{u3} (Left) presents that AEM requires slightly fewer timesteps than the fixed-step schemes. We also observe from Figure \ref{u3} (Right) that AEM costs less CPU time than tamed EM and truncated EM schemes, and substantially less CPU time than the backward EM scheme for a given RMSE = 0.01. These results indicate that for SDE \eqref{exam-1}, AEM performs slightly better than fixed-step schemes that use the uniform stepsize.

\begin{figure}[H]
	\centering
	\begin{subfigure}{0.48\textwidth}
		\centering
		\includegraphics[width=\textwidth]{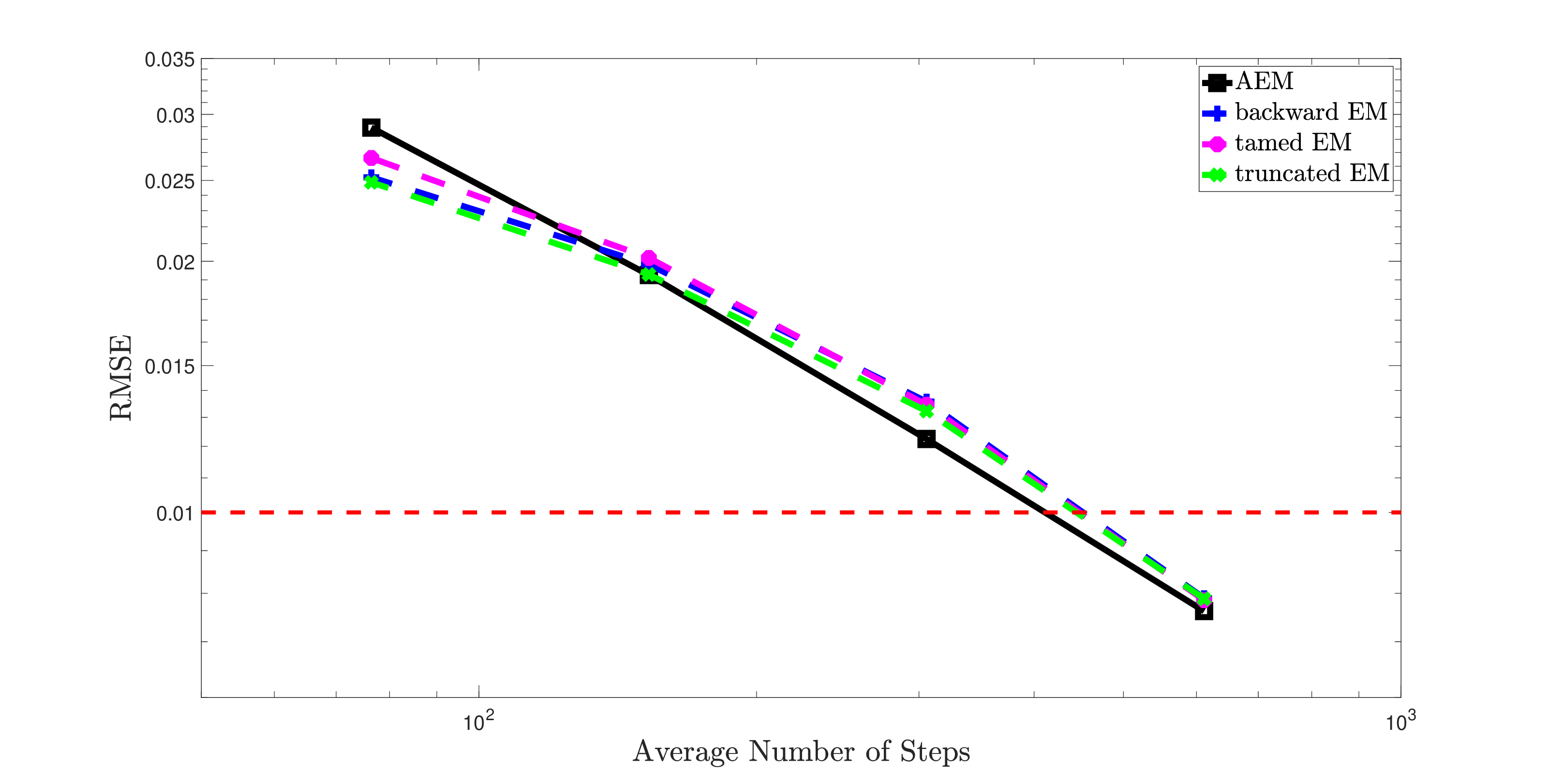}
	\end{subfigure}
	\hspace{0.01\textwidth}
	\begin{subfigure}{0.48\textwidth}
		\centering
		\includegraphics[width=\textwidth]{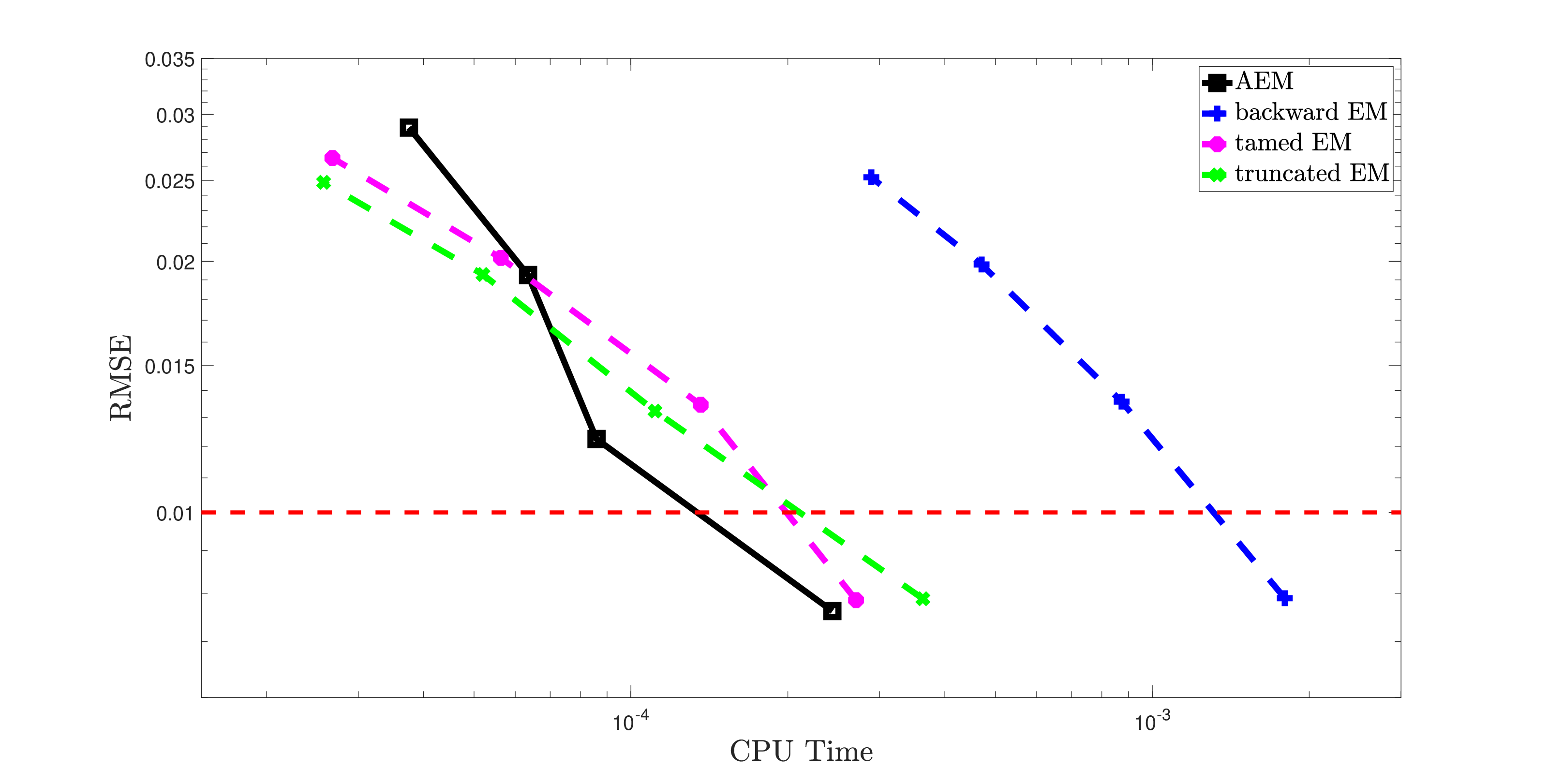}
	\end{subfigure}
	\caption{RMSE of numerical schemes for SDE \eqref{exam-1} with initial value $x_0 =[1.5,1]^{\rm T}$ and $M = 1000$ sample paths.}
	\label{u3} 
\end{figure}
}
\end{example}

\subsection{Langevin-type SDEs}
In the following, we use a high-dimensional overdamped Langevin equation and a Bayesian example with a steep prior to verify the numerical ergodicity of the AEM scheme, and the convergence of its numerical invariant distribution in $L_q$-Wasserstein distance.
Assume that the AEM numerical solution $\{Y_{t_n}\}_{n\ge0}$ is ergodic with invariant measure $\mu^\Delta$. Then, for any test function $\varphi \in C_b(\mathbb{R}^d)$, taking the first moments with respect to $\mu^\Delta$ yields
\begin{equation*}
	\int_{{\mathbb{R}^d}} {\varphi (x){\mu^\Delta }(x)dx}  = \mathop {\lim }\limits_{t \to \infty } \frac{1}{t}\int_0^t {\varphi(x(s))ds}  \approx \frac{1}{T}\int_0^T {\varphi(x(s))ds}  \approx \frac{{\sum\nolimits_{n = 0}^{{N_T}} {\varphi({Y_{{t_n}}})\delta _n^\Delta } }}{{\sum\nolimits_{n = 0}^{{N_T}} {\delta _n^\Delta } }},
\end{equation*}
where the first equality follows from the ergodicity of $\{Y_{t_n}\}_{n\ge0}$, the first approximation relation stems from choosing a sufficiently large terminal time $T > 0$, and the second approximation accounts for the sampling error.
This motivates the introduction of the weighted empirical measure \cite{lemaire2007an, leroy2024adaptive, lamberton2002recursive}
\begin{equation*}
	\mu _T^\Delta (x) = \frac{1}{{\sum\nolimits_{n = 1}^{{N_T}} {\delta _n^\Delta } }}\sum\limits_{n = 1}^{{N_T}} {\delta _n^\Delta {\mathbf{1}_{\{ {Y_{{t_n}}}\} }}(x)} ,
\end{equation*}
where $\mathbf{1}_{\{x\}}$ denotes the Dirac measure centered at $x$. 

To evaluate the performance in approximating the invariant measure, we compare our AEM scheme with both the LMS algorithm \cite{lamba2007an} and Lemaire's mthod \cite{lemaire2007an}.

\begin{example}  {\rm 
Consider the 100-dimensional overdamped Langevin SDE
\begin{equation}  \label{gradient}
	dX(t) =  - \nabla V(X(t))dt + \sqrt {2{\beta ^{ - 1}}} dW(t), \quad t \ge 0,
\end{equation}
where $V:\mathbb{R}^{100} \to \mathbb{R}$ is a potential function and $W(t)$ is a 100-dimensional Brownian motion. 

Set $V(x) = |x|^4/4 + |x|^2/2$ for $x \in \mathbb{R}^{100}$, $\beta = 10$ and initial value $X(0) = x_0 = (1,1,...,1)^{\rm T} \in \mathbb{R}^{100}$. 
One can verify that Assumptions \ref{A2.1}, \ref{A4.1} and \ref{A5.1} hold. 
The adaptive timestep function can be chosen as
\begin{equation}  \label{stepfunction}
	{\delta ^\Delta }(x) = {\left( {\frac{{1 + {{\left| x \right|}^2}}}{{1 + {{\left| {\nabla V(x)} \right|}^2}}} \wedge 1} \right)^2}\Delta , \quad  x \in \mathbb{R}^{100}.
\end{equation}
It can be verifed that \eqref{stepfunction} satisfies Assumptions \ref{A-stepbound} and \ref{A6.2}.
Hence, the Boltzmann-Gibbs distribution with density \eqref{Gibbs} is the invariant measure of SDE \eqref{gradient}. Moreover, it follows from Theorem \ref{th-a5} that the numerical invariant measure of AEM converges to the exact Boltzmann-Gibbs density \eqref{Gibbs} with $1/2$-order convergence rate.

Samples from the first $50\%$ of the time interval are discarded as burn-in to eliminate the effect of the initial condition and ensure that the retained samples represent the target distribution. 
Figure \ref{x1} presents the weighted kernel density estimates (KDEs) and cumulative distribution functions (CDFs) of the first component of AEM solutions for various terminal time $T$. 
It is evident that the numerical invariant measure asymptotically converges to the exact Gibbs invariant measure as $T$ increases.
Figure \ref{inv_convergence1} depicts the convergence rate of the numerical invariant measure for the AEM scheme. In the case of additive noise, the numerical invariant measure converges to the exact invariant measure of SDE \eqref{gradient} at a rate close to $1$, which validates our theoretical results.

\begin{figure}[H]
	\centering
	\begin{subfigure}{0.48\textwidth}
		\centering
		\includegraphics[width=\textwidth]{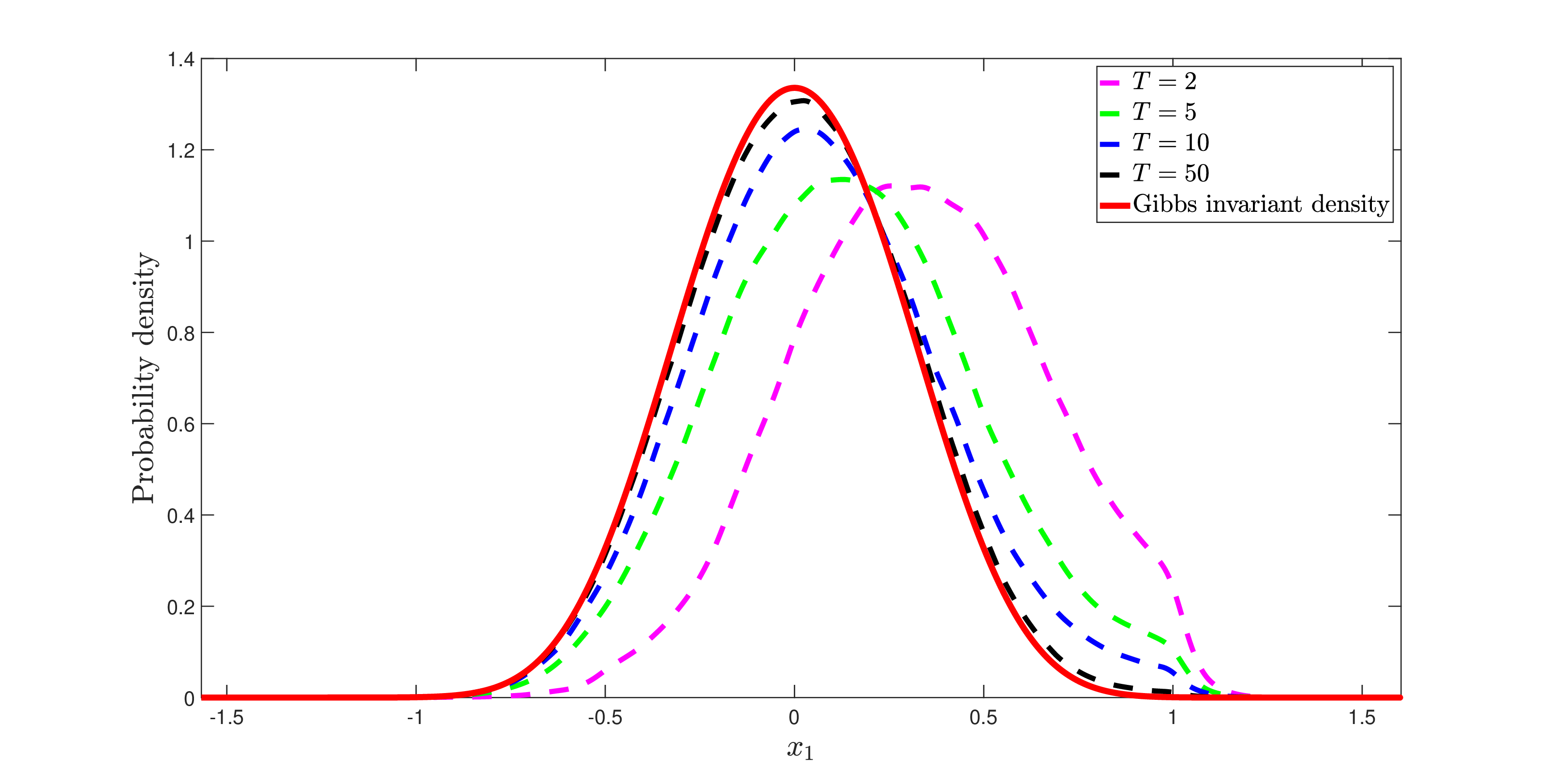}
	\end{subfigure}
	\hspace{0.01\textwidth}
	\begin{subfigure}{0.48\textwidth}
		\centering
		\includegraphics[width=\textwidth]{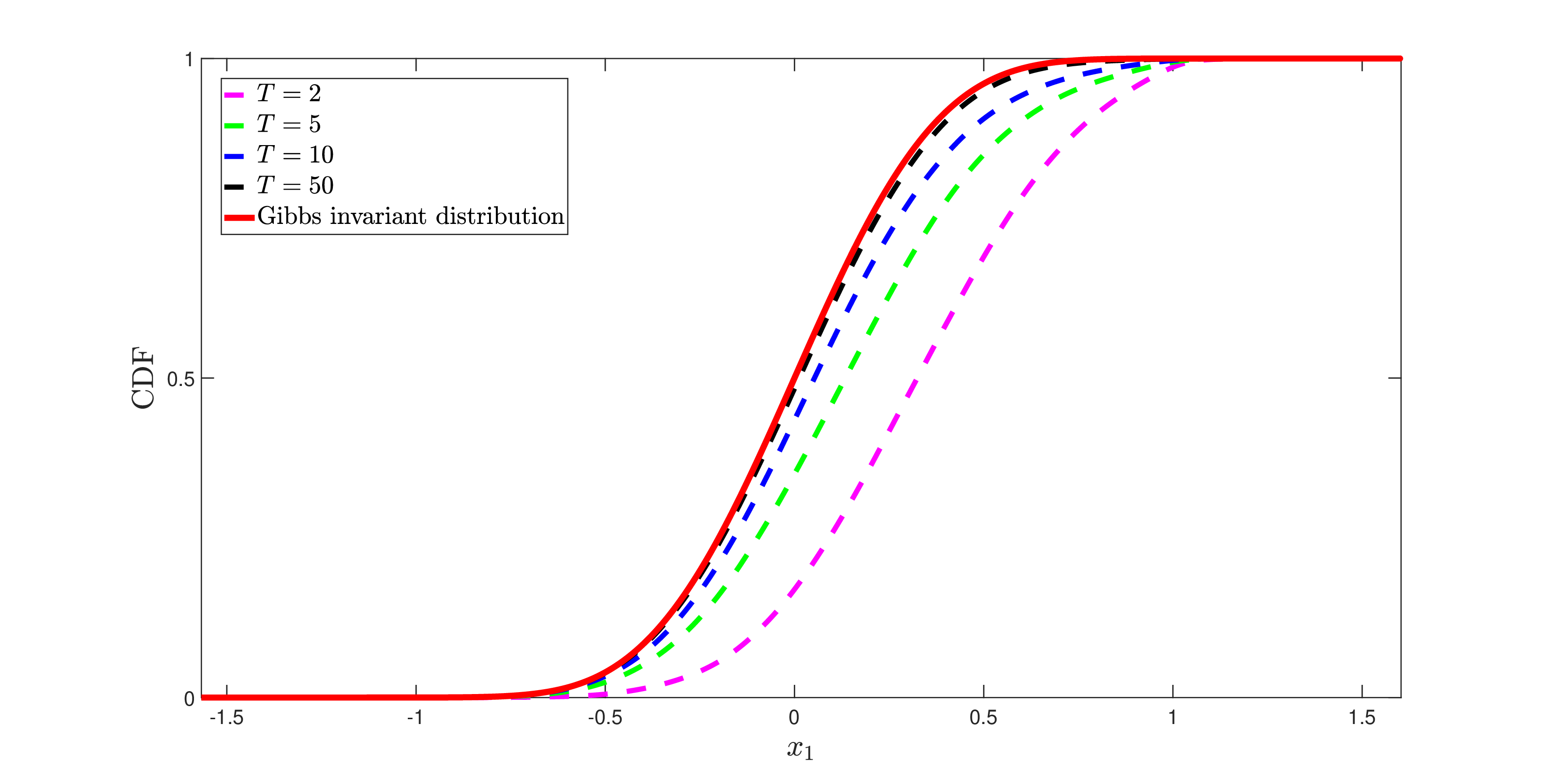}
	\end{subfigure}
	\caption{Weighted KDEs and CDFs of the first component of the AEM solutions to SDE \eqref{gradient}, based on $M = 1000$ sample paths and terminal time $T  \in \{ 2,5, 10, 50 \} $.}
	\label{x1} 
\end{figure}

\begin{figure}[H] 
	\centerline{\includegraphics[width=0.7\textwidth]{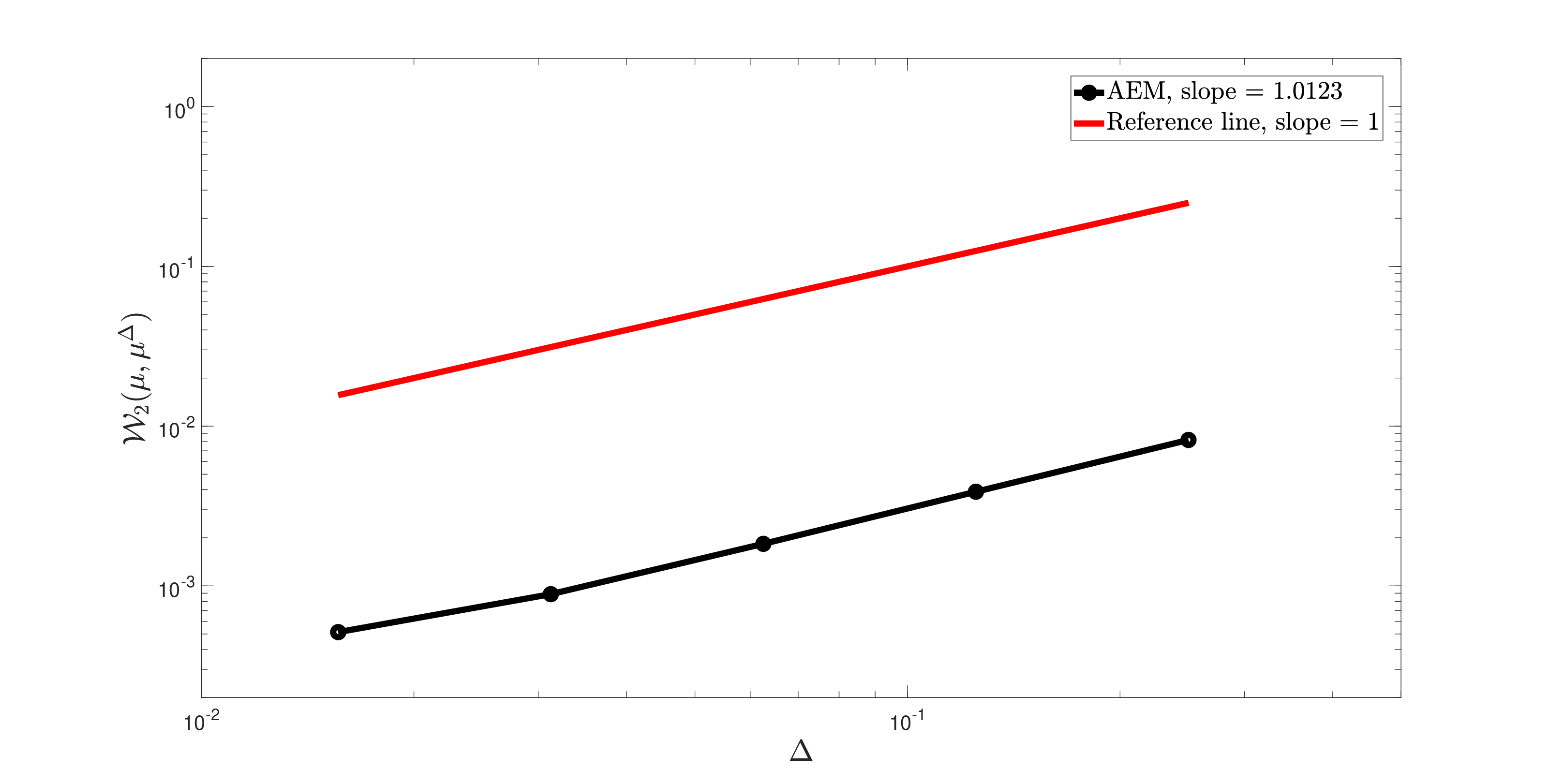}}
	\caption{The $L_2$-Wasserstein error between the numerical invariant measure generated by AEM and the theoretical Gibbs invariant measure, based on $M = 1000$ sample paths, $T = 200$ and $\Delta  \in \{ {2^{ - 6}},{2^{ - 5}},{2^{ - 4}},{2^{ - 3}},{2^{ - 2}}\} $.}
	\label{inv_convergence1}
\end{figure}

To evaluate performance, we compare AEM with the LMS algorithm and Lemaire's method. Figure \ref{x3} plots the weighted KDEs and CDFs of the first component generated by these three schemes alongside the theoretical Gibbs invariant density. One observes that all three numerical schemes accurately reconstruct the target distribution, exhibiting high consistency with the theoretical Gibbs invariant density.
Furthermore, we list the CPU time, effective sample size and $L^2$-Wasserstein errors of these schemes in Table \ref{table1}. 
Here, the effective sample size account for the post-burn-in trajectory points.
Notably, AEM achieves higher accuracy while requiring substantially less CPU time than the competing schemes, thereby demonstrating superior computational efficiency.

\begin{figure}[H]
	\centering
	\begin{subfigure}{0.48\textwidth}
		\centering
		\includegraphics[width=\textwidth]{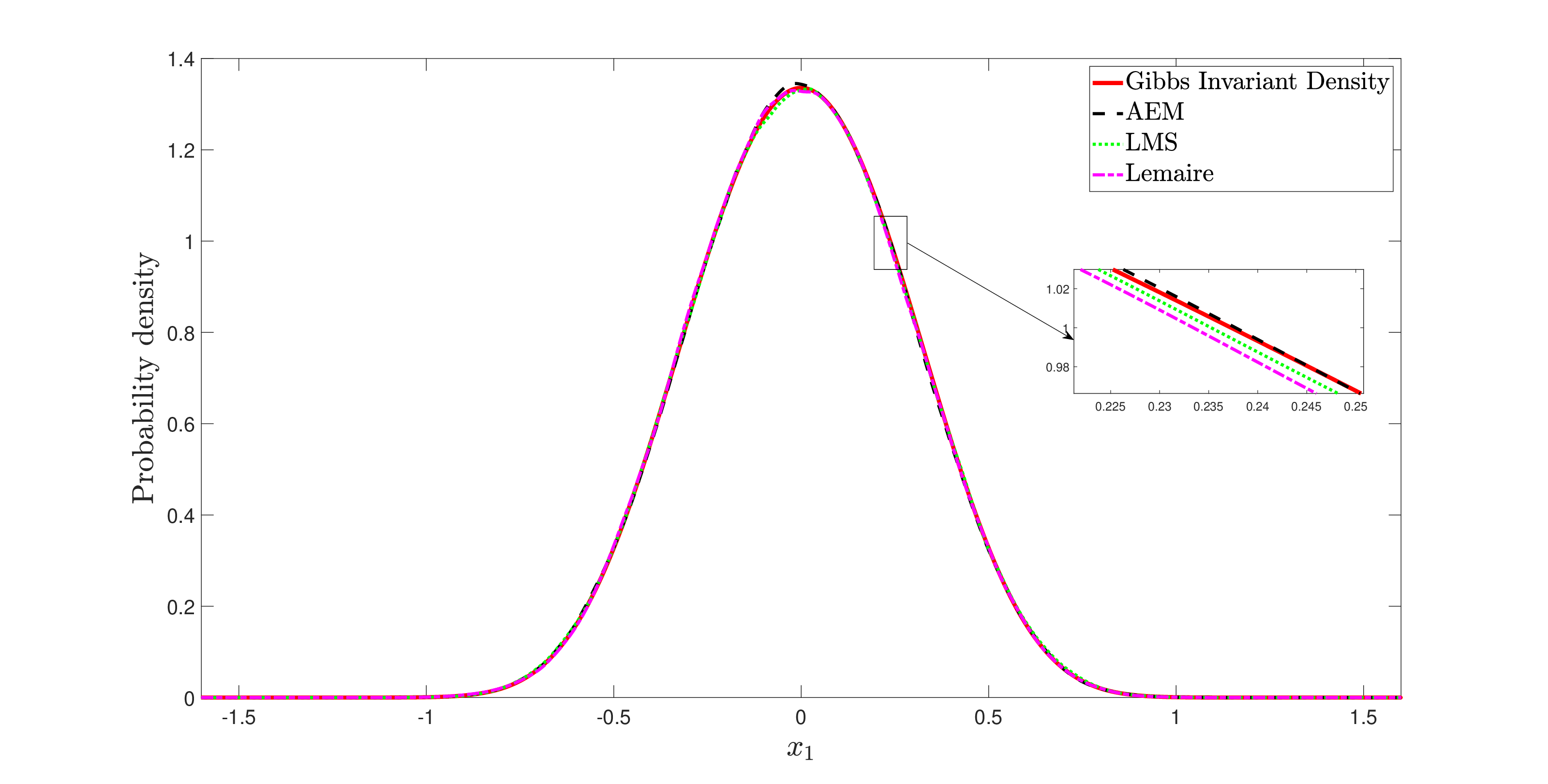}
	\end{subfigure}
	\hspace{0.01\textwidth}
	\begin{subfigure}{0.48\textwidth}
		\centering
		\includegraphics[width=\textwidth]{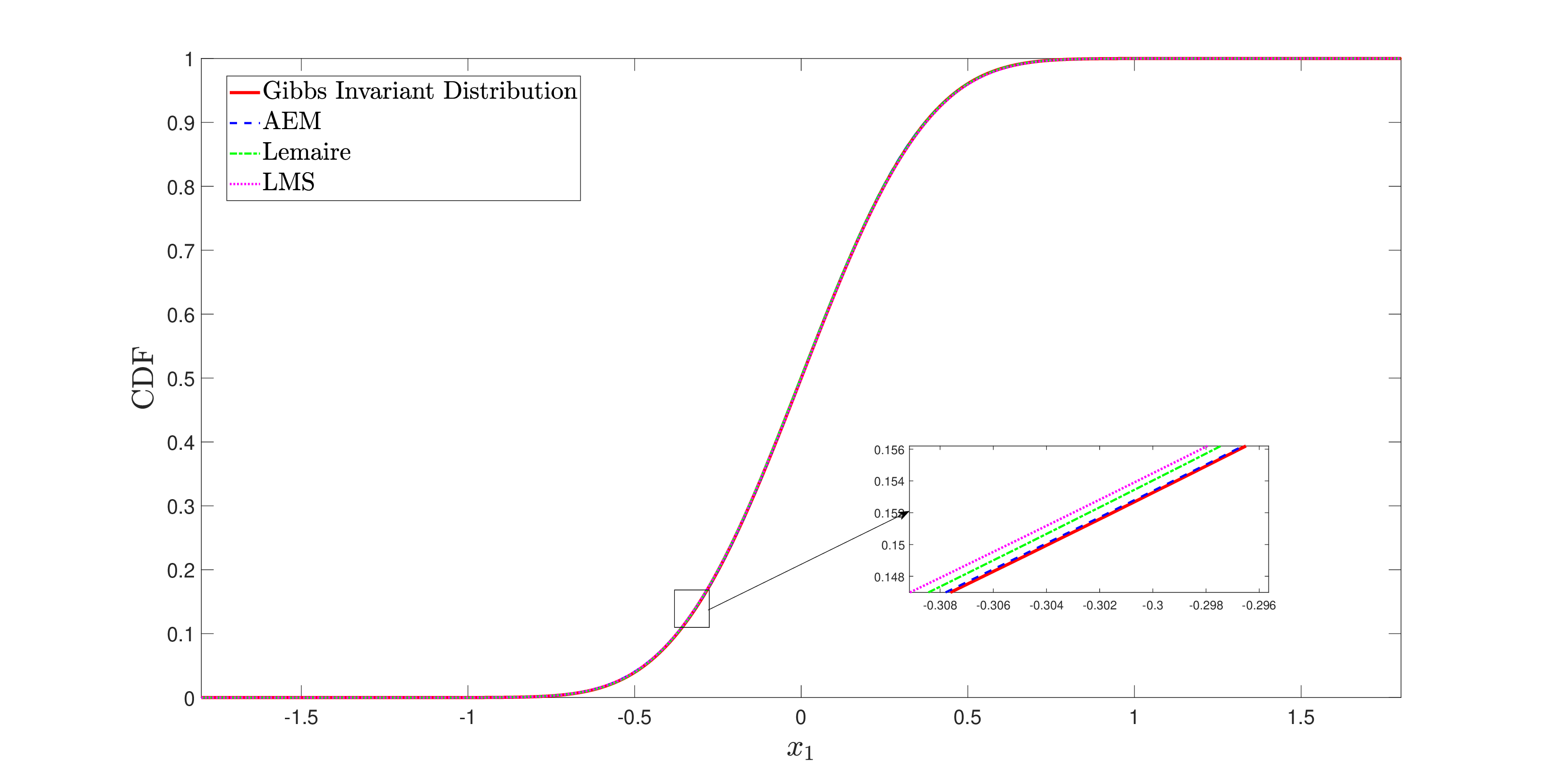}
	\end{subfigure}
	\caption{Weighted KDEs and CDFs of the first component of the numerical schemes for SDE \eqref{gradient}, based on $M = 1000$ sample paths and $T = 50$.}
	\label{x3} 
\end{figure}

\begin{table}[htbp]
	\centering
	\caption{Performance comparisons of numerical schemes for invariant measure estimation.}  
	\label{table1}
	\begin{tabular}{c c c c}
		\toprule  
		Scheme & CPU Time (s) & Effective Sample Size & $\mathcal{W}_2$ Error \\
		\midrule  
		AEM     &  4.35 &  6,852,439 & $1.3091 \times 10^{-3}$ \\
		Lemaire & 10.95 & 25,145,469 & $1.3400 \times 10^{-3}$ \\
		LMS     &  8.26 &  7,415,858 & $1.6737 \times 10^{-3}$ \\
		\bottomrule  
	\end{tabular}
\end{table}
}
\end{example}

\begin{example} {\rm 
Bayesian inference often requires sampling from posterior distributions \cite{gamerman2006markov}.
We consider a Bayesian inference problem introduced in \cite{leroy2024adaptive}. Let $\mathbf{y} = (y_1, ..., y_k)$ be a collection of independent observations satisfying ${y_i} \sim N(\theta ,1)$, $1 \le i \le k$, where $\theta \in \mathbb{R}$ is an unknown parameter. We assume a priori that the true value of $\theta$ lies in the interval $[1, 3]$. To encode this prior knowledge, we introduce a smoothly bounded prior density of the form $\pi_0 (\theta) \propto \exp ( - {(\theta  - a)^{2K}})$ with fixed $a \in [1,3]$ and $K\ge1$ controls the steepness of the confinement. The Gaussian observation model gives rise to the likelihood $\pi (\mathbf{y}| \theta) \propto \exp \left( { - \frac{1}{2}\sum\nolimits_{i = 1}^k {{{\left| {{y_i} - \theta } \right|}^2}} } \right)$. By Bayes' theorem, the resulting posterior density takes the form
\begin{equation}  \label{bey1}
	\pi (\theta |\mathbf{y}) \propto \exp \left( { - {{(\theta  - a)}^{2K}}} \right)\prod\limits_{i = 1}^k {\exp \left( { - \frac{1}{2}{{\left| {{y_i} - \theta } \right|}^2}} \right)}.
\end{equation}

To sample from this posterior distribution, we construct an overdamped Langevin process whose unique invariant measure coincides with $\pi (\theta |\mathbf{y})$. Defining the potential function 
\begin{equation*}
	V(\theta ) = \frac{1}{2}\sum\limits_{i = 1}^k {{{\left| {{y_i} - \theta } \right|}^2}}  + {(\theta  - a)^{2K}}.
\end{equation*}
The associated overdamped Langevin equation is then
\begin{equation}  \label{bay2}
	d\theta(t) =  - V'(\theta(t))dt + \sqrt 2 dW(t)
\end{equation}
with 
\begin{equation*}
	V'(x ) =  - \left( {\sum\limits_{i = 1}^k {{y_i}}  - kx  - 2K{{(x  - a)}^{2K - 1}}} \right),  \quad x\in \mathbb{R}.
\end{equation*}
By solving the corresponding Fokker-Planck equation of SDE \eqref{bay2}, 
one can easily verify that the posterior distribution \eqref{bey1} is the invariant distribution of SDE \eqref{bay2}.

Set $K = 2$, $a = 2$, $M = 1000$, $\Delta = 2^{-12}$, $T = 100$. The data are simulated using ${y_i} \sim N(1.7 ,1)$ with $k  = 10$. The timestep function is specified as in \eqref{stepfunction}.
Figure \ref{a4_1} presents the weighted KDE and CDF generated by the AEM scheme alongside the target posterior density and CDF. It is evident from Figure \ref{a4_1} that AEM provides a highly accurate approximation to the theoretical posterior distribution. 

\begin{figure}[H]
	\centering
	\begin{subfigure}{0.48\textwidth}
		\centering
		\includegraphics[width=\textwidth]{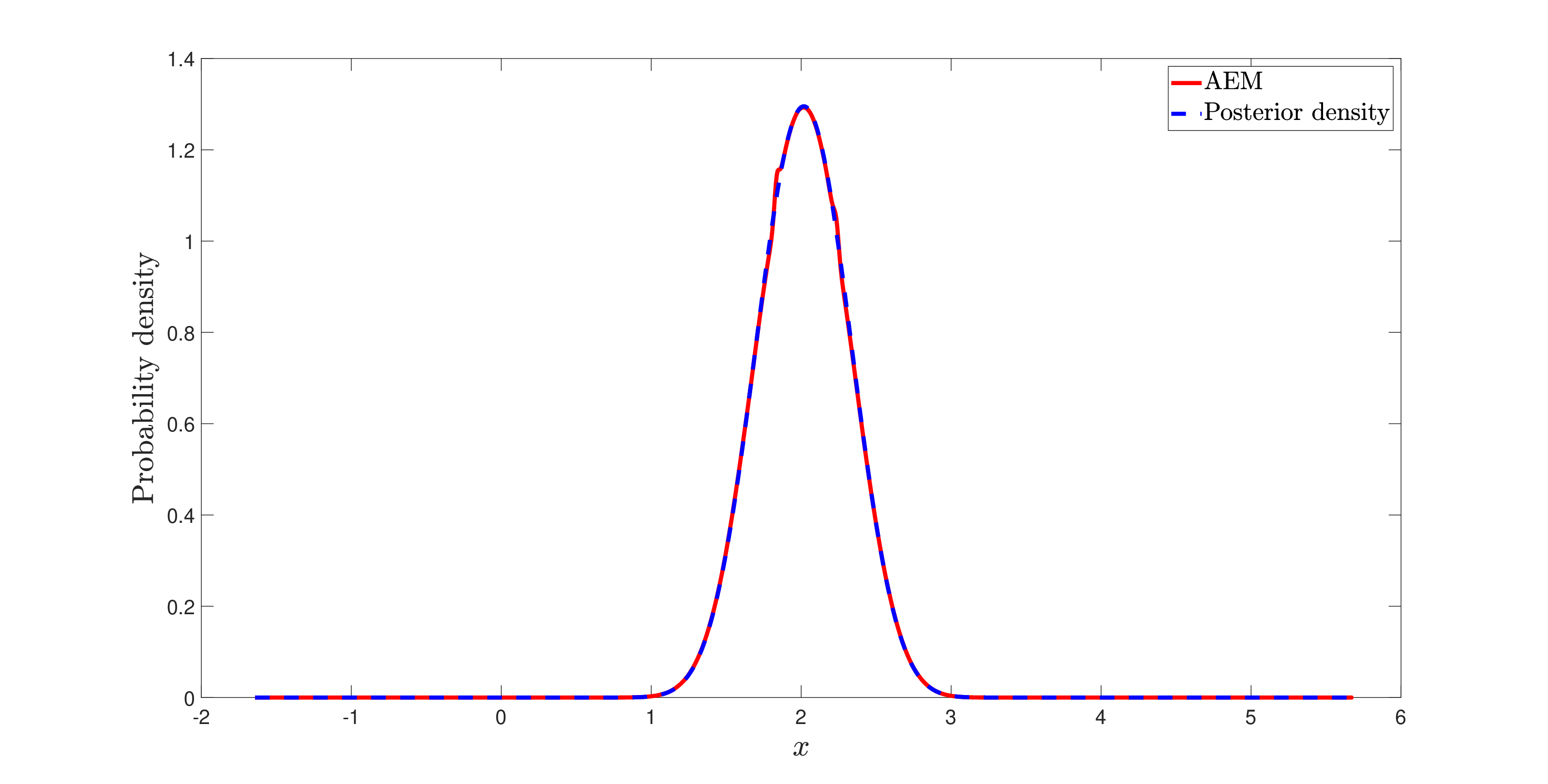}
	\end{subfigure}
	\hspace{0.01\textwidth}
	\begin{subfigure}{0.48\textwidth}
		\centering
		\includegraphics[width=\textwidth]{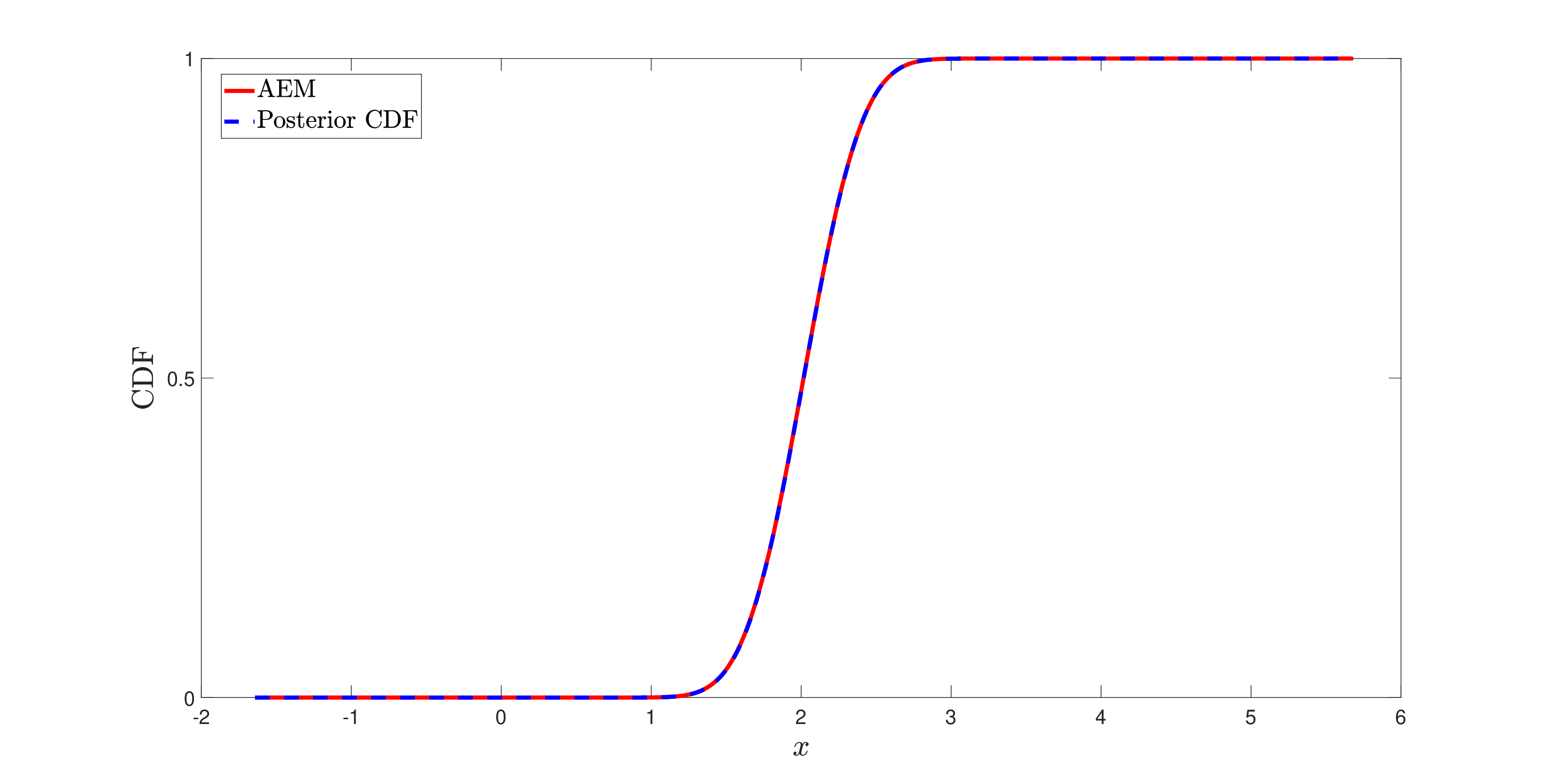}
	\end{subfigure}
	\caption{Comparisons of the weighted AEM approximations with the theoretical posterior density and CDFs for SDE \eqref{bay2}, with $T = 100$ and $M = 1000$.}
	\label{a4_1} 
\end{figure}
}
\end{example}

\section{Conclusions}	\label{sec10}
This paper proposes an adaptive time-stepping EM scheme for SDEs with non-globally Lipschitz coefficients that exhibit polynomial growth. 
We establish the moment boundedness and strong convergence of the proposed scheme on both finite time intervals and uniformly in time. Furthermore, we show that the scheme preserves the exponential stability of the underlying SDE. For long-time ergodic dynamics, we demonstrate that the numerical invariant measure of the proposed scheme converges to that of the underlying SDE at a rate of $1/2$ of convergence rate in the $L^q$-Wasserstein distance, validating its theoretical reliability for sampling target distributions in overdamped Langevin framework.
Finally, we compared the proposed scheme with several widely used fixed-step and existing adaptive methods on stiff, non-stiff and overdamped Langevin systems.
Numerical results show that the proposed scheme provides an accurate approximation of the target distribution and achieves superior accuracy and performance compared to competing methods.

\vskip 25pt
\bibliographystyle{elsarticle-num}

\bibliography{ref}

\end{document}